\documentclass[final,3p]{elsarticle}
 \usepackage[latin1]{inputenc}
 \usepackage{graphics}
 \usepackage{graphicx}
 \usepackage{epsfig}
\usepackage{amssymb}
 \usepackage{amsthm}
 \usepackage{lineno}
 \usepackage{amsmath}
   \numberwithin{equation}{section}
\usepackage{mathrsfs}

\NeedsTeXFormat{LaTeX2e}
\ProvidesPackage{natbib}
\newtheorem{thm}{Theorem}[section]

\newtheorem{lem}[thm]{Lemma}

\newtheorem{defn}[thm]{Definition}

 \setcounter{section}{0}
\biboptions{sort&compress,square}
\begin{document}
\begin{frontmatter}
\author[rvt1]{Jian Wang}
\ead{wangj484@nenu.edu.cn}
\author[rvt2]{Yong Wang\corref{cor2}}
\ead{wangy581@nenu.edu.cn}
\author[rvt2]{Tong Wu}
\cortext[cor2]{Corresponding author.}
\address[rvt1]{School of Science, Tianjin University of Technology and Education, Tianjin, 300222, P.R.China}
\address[rvt2]{School of Mathematics and Statistics, Northeast Normal University,
Changchun, 130024, P.R.China}

\title{ Dirac operators with torsion, spectral Einstein functionals\\ and the noncommutative residue}
\begin{abstract}

Recently Dabrowski etc. \cite{DL} obtained the metric and Einstein
functionals by two vector fields and Laplace-type operators over vector bundles, giving an interesting example of
the spinor connection and square of the Dirac operator. Pf$\ddot{a}$ffle and Stephan \cite{PS1} considered
orthogonal connections with arbitrary torsion on compact Riemannian manifolds and computed the spectral
action.
 Motivated by the spectral functionals and Dirac operators with torsion,
  we give some new spectral functionals which is the extension of spectral functionals to the noncommutative realm with torsion,
 and we relate them to  the noncommutative residue for manifolds with boundary.
  Our method of producing these spectral functionals is the noncommutative residue and Dirac operators with torsion.
\end{abstract}
\begin{keyword}
 Dirac operators with torsion; noncommutative residue;  orthogonal
connection with torsion, spectral functional.
\end{keyword}
\end{frontmatter}
\section{Introduction}
\label{1}
An eminent spectral scheme that generates geometric objects on
manifolds such as residue, scalar curvature, and other scalar combinations of curvature tensors
is the small-time asymptotic expansion of the (localised) trace
of heat kernel\cite{PBG,FGV}. The theory has very rich structures both in physics and mathematics.
In the recent paper\cite{DL}, Dabrowski etc. defined bilinear functionals of vector fields and differential forms,
the densities of which yield the  metric and Einstein spectral functionals on even-dimensional Riemannian manifolds,
and they obtained certain
values or residues of the (localised) zeta function of the Laplacian  arising from
the Mellin transform and the coefficients of this expansion.

Let $E$ be a finite-dimensional complex vector bundle over a closed compact manifold $M$
of dimension $n$, the noncommutative residue of a pseudo-differential operator
$P\in\Psi DO(E)$ can be defined by
 \begin{equation}
res(P):=(2\pi)^{-n}\int_{S^{*}M}\mathrm{Tr}(\sigma_{-n}^{P}(x,\xi))\mathrm{d}x \mathrm{d}\xi,
\end{equation}
where $S^{*}M\subset T^{*}M$ denotes the co-sphere bundle on $M$ and
$\sigma_{-n}^{P}$ is the component of order $-n$ of the complete symbol
 \begin{equation}
\sigma^{P}:=\sum_{i}\sigma_{i}^{P}
\end{equation}
of $P$, cf. \cite{Ac,Wo,Wo1,Gu},
and the linear functional $res: \Psi DO(E)\rightarrow \mathbb{C }$
is in fact the unique trace (up to multiplication
by constants) on the algebra of pseudo-differential operators $\Psi DO(E)$.
In \cite{Co1}, Connes  computed a conformal four-dimensional
 Polyakov action analogy using the noncommutative residue.
Connes  proved that the noncommutative residue on a compact manifold $M$ coincided with Dixmier's trace on pseudodifferential
operators of order -dim$M$\cite{Co2}.
More precisely, Connes made a challenging observation that
the Wodzicki residue of the inverse square of the Dirac operator yields the
Einstein-Hilbert action of general relativity\cite{Co2}\cite{Co3}. Kastler\cite{Ka} gave a brute-force proof of this theorem, and Kalau and Walze\cite{KW} proved
this theorem in the normal coordinates system simultaneously, which is called the Kastler-Kalau-Walze theorem now.
Based on the theory of the noncommutative reside  introduced by Wodzicki, Fedosov etc.\cite{FGLS} constructed a noncommutative
residue on the algebra of classical elements in Boutet de Monvel's calculus on a compact manifold with boundary of dimension $n>2$.
With elliptic pseudodifferential operators and  noncommutative
residue, it is natural way to study the Kastler-Kalau-Walze type theorem and
operator-theoretic explanation of the gravitational action for manifolds with boundary.
For Dirac operators and signature operators,
Wang computed the noncommutative residue and proved Kastler-Kalau-Walze type theorem for manifolds with boundary\cite{Wa1,Wa3,Wa4}.

 Earlier Jean-Michel Bismut \cite{JMB} proved a local index theorem for Dirac operators on a
Riemannian manifold $M$ associated with connections on $TM$ which have non zero
torsion.
In \cite{AT}, Ackermann and Tolksdorf proved a generalized version of the well-known Lichnerowicz formula for the square of the
most general Dirac operator with torsion $D_{T}$ on an even-dimensional spin manifold associated to a metric connection with torsion.
In \cite{PS}, Pf$\ddot{a}$ffle and Stephan considered compact Riemannian spin manifolds without boundary equipped with orthogonal connections,
and investigated the induced Dirac operators. Moreover, Pf$\ddot{a}$ffle and Stephan considered
orthogonal connections with arbitrary torsion on compact Riemannian manifolds, and for the induced
Dirac operators, twisted Dirac operators and Dirac operators of Chamseddine-Connes type they computed the spectral
action\cite{PS1}. In \cite{WWY}, we computed the lower
dimensional  volume $\widetilde{{\rm Wres}}[\pi^+(D^{*}_{T})^{-p_{1}}\circ\pi^+D_{T}^{-p_{2}}]$ and got  a  Kastler-Kalau-Walze
type theorems associated with Dirac operators with torsion on compact manifolds with  boundary.
The purpose of this paper is to generalize the results in \cite{DL},\cite{Wa3}, \cite{PS} and get spectral functionals
 associated with Dirac operators with torsion on compact manifolds with  boundary. For lower dimensional compact Riemannian manifolds
  with  boundary, we compute the lower dimensional  residue of $\widetilde{\nabla}_{X}\widetilde{\nabla}_{Y}D_{T}^{-4}$ and
   get the Kastler-Kalau-Walze theorems.

\section{Spectral functionals for Dirac operator with Torsion}

 In this section we consider an $n$-dimensional oriented Riemannian manifold $(M, g^{M})$ equipped
with some spin structure. The Levi-Civita connection
$\nabla: \Gamma(TM)\rightarrow \Gamma(T^{*}M\otimes TM)$ on $M$ induces a connection
$\nabla^{S}: \Gamma(S)\rightarrow \Gamma(T^{*}M\otimes S).$ By adding a additional torsion term $t\in\Omega^{1}(M,End TM)$ we
obtain a new covariant derivative
 \begin{equation}
\widetilde{\nabla}:=\nabla+t
\end{equation}
on the tangent bundle $TM$. Since $t$ is really a one-form on $M$ with values in the bundle of skew endomorphism $Sk(TM)$ in \cite{GHV},
$\nabla$ is in fact compatible with the Riemannian metric $g$ and therefore also induces a connection $\widetilde{\nabla}^{S}:=\nabla^{S}+T$
on the spinor bundle. Here $T\in\Omega^{1}(M, End S)$ denotes the `lifted' torsion term $t\in\Omega^{1}(M, End TM)$.

 Next, we will briefly discuss the construction of this connection. Again, we write $\tilde{\nabla}_{X}Y=\nabla_{X}Y+A(X,Y)$
 with the Levi-Civita connection $\nabla$.
For any $X \in T_{p}M$ the endomorphism $A(X,\cdot)$ is skew-adjoint and hence it is an element of
$\mathfrak{so}(T_{p}M) $, we can express it as
\begin{equation}
A(X,\cdot)=\sum_{i<j}\alpha_{ij}E_{i}\wedge E_{j}.
\end{equation}
 Here $E_{i}\wedge E_{j}$ is meant as the endomorphism of $T_{p}M$ defined by $E_{i}\wedge E_{j}$.
 For any $X \in T_{p}M$ one determines the coefficients in (2.2) by
\begin{equation}
\alpha_{ij}=\langle A(X,E_{i}), E_{j}\rangle=A_{XE_{i} E_{j}}.
\end{equation}
 Each  $E_{i}\wedge E_{j}$  lifts to $\frac{1}{2}E_{i}\cdot E_{j}$
in $spin(n)$, and the spinor connection induced by $\widetilde{\nabla}$ is locally
given by
 \begin{equation}
\widetilde{\nabla}_{X}\psi=\nabla_{X}\psi+\frac{1}{2}\sum_{i<j}\alpha_{ij}E_{i}\cdot E_{j}\psi=
\nabla_{X}\psi+\frac{1}{2}\sum_{i<j}A_{XE_{i} E_{j}}E_{i}\cdot E_{j}\psi.
\end{equation}
The connection given by (2.4) is compatible with the metric on spinors and with
Clifford multiplication. Then, the Dirac operator associated to the spinor connection from (2.4) is defined as
\begin{align}
D_{T}\psi&=\sum_{i=1}^{n}E_{i}\tilde{\nabla}_{E_{i}}\psi=D\psi+\frac{1}{2}\sum_{i=1}^{n}\sum_{j<k}A_{E_{i} E_{j}E_{k}}E_{i}\cdot E_{j}
\cdot E_{k}\psi\nonumber\\
&=D\psi+\frac{1}{4}\sum_{i,j,k=1}^{n}A_{E_{i} E_{j}E_{k}}E_{i}\cdot E_{j}\cdot E_{k}\psi
\end{align}
where $D$ is the Dirac operator induced by the Levi-Civita connection and $``\cdot"$ is the
Clifford multiplication. Let $c(e)$ be the Clifford operators acting on $S(TM)$, satisfying
\begin{align}
c(e_{i})c(e_{j})+c(e_{j})c(e_{i})=-2\langle e_{i}, e_{j}  \rangle.
\end{align}
For any orthogonal connection $\tilde{\nabla}$  on the tangent bundle of $M$ there exist a unique
vector field $V$, we have
 \begin{equation}
D_{T}\psi=D\psi+\frac{3}{2}T\cdot\psi-\frac{n-1}{2}V\cdot\psi,~~D_{T}^{*}\psi=D\psi+\frac{3}{2}T\cdot\psi+\frac{n-1}{2}V\cdot\psi,
\end{equation}
where Clifford multiplication by any 3-form is self-adjoint.
As the Clifford multiplication by the vector field $V$ is skew-adjoint we get
that $D_{T}$ is symmetric with respect to the natural $L^{2}$-scalar product on spinors if and only
if the vectorial component of the torsion vanishes, $V\equiv 0$. Note that the Cartan type torsion $S$ does not contribute to the Dirac operator
$D_{T}$. As $D_{T}^{*}D_{T}$ is a generalized Laplacian,  one has  the following Lichnerowicz formula.
\begin{lem}\cite{PS}
For the Dirac operator $D_{T}$  associated to the orthogonal connection $\widetilde{\nabla}$, we have
\begin{align}
D_{T}^{*}D_{T}\psi=&\Delta\psi+\frac{1}{4}R^{g}\psi+\frac{3}{2}dT\cdot\psi-\frac{3}{4}\parallel T\parallel^{2}\psi\nonumber\\
          & +\frac{n-1}{2}div^{g}(V)\psi+(\frac{n-1}{2})^{2}(2-n)|V|^{2}\psi\nonumber\\
          & +3(n-1)(T\cdot V\cdot\psi+(V_{\rfloor}T)\cdot\psi),
\end{align}
for any spinor field $\psi$, where $\Delta$ is the Laplacian associated to the connection
 \begin{equation}
\widetilde{\nabla}_{X}\psi=\nabla_{X}\psi+\frac{3}{2}(X_{\rfloor}T)\cdot\psi-\frac{n-1}{2}V\cdot X\cdot\psi-\frac{n-1}{2}\langle V, X\rangle\psi.
\end{equation}
\end{lem}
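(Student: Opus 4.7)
The plan is to start from the decomposition of $D_T$ and $D_T^{*}$ given in (2.7) and expand the product $D_T^{*}D_T$. Writing
\[
D_T^{*}D_T = \bigl(D + \tfrac{3}{2}T\cdot + \tfrac{n-1}{2}V\cdot\bigr)\bigl(D + \tfrac{3}{2}T\cdot - \tfrac{n-1}{2}V\cdot\bigr),
\]
one obtains the classical square $D^{2}$, the anticommutator $\tfrac{3}{2}(DT + TD)$, the operator $\tfrac{n-1}{2}(VD - DV)$, and three quadratic Clifford products $\tfrac{9}{4}T\cdot T$, $\tfrac{3(n-1)}{4}(VT - TV)$, $-\tfrac{(n-1)^{2}}{4}V\cdot V$. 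The leading term $D^{2}$ is handled by the classical Lichnerowicz identity $D^{2} = (\nabla^{S})^{*}\nabla^{S} + \tfrac{1}{4}R^{g}$.

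For the mixed cross terms I would use the standard Clifford-commutator identities relating $c(E_{i})$ with Clifford multiplication by a $p$-form, together with the Bochner-type identity $\sum_{i}c(E_{i})\nabla^{S}_{E_{i}}\omega = c(d\omega) + c(d^{*}\omega)$. Specialised to $\omega = T$ this converts $DT + TD$ into $c(dT) + c(d^{*}T)$ plus a first-order piece of the form $-2\sum_{i}c(E_{i}\rfloor T)\nabla^{S}_{E_{i}}$; specialised to $\omega = V^{\flat}$ it rewrites $VD - DV$ in terms of $c(dV^{\flat})$, $\operatorname{div}^{g}(V)$ and $\nabla^{S}_{V}$. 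The quadratic Clifford products decompose by grade: $T\cdot T$ yields $-\|T\|^{2}$ plus a 6-form component (which recombines with the $dT$ contribution), $V\cdot V = -|V|^{2}$, and $V\cdot T$, $T\cdot V$ split into $V\rfloor T$ and $V\wedge T$ pieces, producing the $-\tfrac{3}{4}\|T\|^{2}$, $|V|^{2}$ and mixed $T\cdot V$, $(V_{\rfloor}T)$ terms in the statement.

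The residual first-order pieces collected above are precisely those absorbed by completing the square of $(\nabla^{S})^{*}\nabla^{S}$ against the modified connection $\widetilde{\nabla}$ of (2.9). Writing $\widetilde{\nabla}_{X} = \nabla^{S}_{X} + A_{X}$ with the endomorphism $A_{X}$ read off (2.9), the expansion
\[
\widetilde{\nabla}^{*}\widetilde{\nabla} = (\nabla^{S})^{*}\nabla^{S} + \text{(linear in }\nabla^{S}\text{)} + \text{(zeroth-order residue)}
\]
shows that the linear-in-$\nabla^{S}$ part cancels exactly what was produced above, while the zeroth-order residue contributes the divergence $\tfrac{n-1}{2}\operatorname{div}^{g}(V)\psi$, the $(\tfrac{n-1}{2})^{2}(2-n)|V|^{2}\psi$ factor, and (combined with leftover Clifford pieces) the coefficient $3(n-1)$ of the mixed $T\cdot V$ and $V_{\rfloor}T$ terms.

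The principal obstacle will be bookkeeping: keeping track of signs and numerical factors through (i) the grade decomposition of $T\cdot T$ and $V\cdot T$ in the Clifford algebra, and (ii) the rearrangement $(\nabla^{S})^{*}\nabla^{S}\to\widetilde{\nabla}^{*}\widetilde{\nabla}$, so that the precise coefficients $\tfrac{n-1}{2}$, $(\tfrac{n-1}{2})^{2}(2-n)$ and $3(n-1)$ in the statement emerge correctly. A cleaner safety-check is to specialise the generalised Lichnerowicz formula of Ackermann-Tolksdorf \cite{AT} for an arbitrary Dirac operator with torsion to the Cartan decomposition $t = V + T + S$ used in (2.5)--(2.7), which yields the identity in one step.
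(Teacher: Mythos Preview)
The paper does not prove this lemma at all: it is stated with the citation \cite{PS} and no proof is given, as it is quoted verbatim from Pf\"affle--Stephan. So there is no ``paper's own proof'' to compare against here.

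That said, your outline is the standard route and is essentially what Pf\"affle--Stephan carry out in \cite{PS}: expand $D_T^{*}D_T$ via (2.7), apply the classical Lichnerowicz formula to $D^{2}$, use the identity $D\circ c(\omega)+c(\omega)\circ D = c(d\omega)+c(d^{*}\omega)-2\sum_i c(E_i\rfloor\omega)\nabla^{S}_{E_i}$ on the cross terms, decompose the quadratic Clifford products by grade, and then absorb the leftover first-order pieces into the Bochner Laplacian of the modified connection (2.9). Your remark that the Ackermann--Tolksdorf generalised Lichnerowicz formula \cite{AT} gives a one-line alternative is also correct and is how one would verify the coefficients if the direct bookkeeping becomes unwieldy. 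There is no gap in your plan; the only work is, as you say, the sign and coefficient tracking.
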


The following lemma of Dabrowski etc.'s Einstein functional play a key role in our proof
of the Einstein functional with torsion.
Let $V$, $W$ be a pair of vector fields on a compact
Riemannian manifold $M$, of dimension $n = 2m$. Using the Laplace operator $\Delta^{-1}_{T}=(D_{T}^{*}D_{T})^{-1}=\Delta+E $
acting on sections of
 a vector bundle $S(TM)$ of rank $2^{m}$ which
 may contain both some nontrivial connections and torsion,
 the spectral functionals over vector fields defined by
\begin{lem}\cite{DL}
The Einstein functional equal to
 \begin{equation}
Wres\big(\widetilde{\nabla}_{V}\widetilde{\nabla}_{W}\Delta^{-m}_{T}\big)=\frac{\upsilon_{n-1}}{6}2^{m}\int_{M}G(V,W)vol_{g}
 +\frac{\upsilon_{n-1}}{2}\int_{M}F(V,W)vol_{g}+\frac{1}{2}\int_{M}(\mathrm{Tr}E)g(V,W)vol_{g},
\end{equation}
where $G(V,W)$ denotes the Einstein tensor evaluated on the two vector fields, $F(V,W)=Tr(V_{a}W_{b}F_{ab})$ and
$F_{ab}$ is the curvature tensor of the connection $T$, $\mathrm{Tr}E$ denotes the trace of $E$ and $\upsilon_{n-1}=\frac{2\pi^{m}}{\Gamma(m)}$.
\end{lem}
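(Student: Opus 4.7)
The plan is to compute $\mathrm{Wres}(\widetilde{\nabla}_V \widetilde{\nabla}_W \Delta_T^{-m})$ directly from the definition of the noncommutative residue in (1.1), by constructing the component $\sigma_{-n}$ of the total symbol and integrating its fiberwise trace over $S^{*}M$. Since $\widetilde{\nabla}_V \widetilde{\nabla}_W$ has order $2$ and $\Delta_T^{-m}$ has order $-n$, the composition sits at order $-n+2$, and the required $\sigma_{-n}$-component collects contributions from several orders of each factor via the Leibniz composition formula.

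First I would expand the symbol of the parametrix $\Delta_T^{-m}$. Because $\Delta_T = \Delta + E$ is a Laplace-type operator, writing the covariant Laplacian as $-g^{ab}(\partial_a+\omega_a)(\partial_b+\omega_b)$ plus Christoffel terms yields recursively $\sigma_{-2m}(\Delta_T^{-m}) = |\xi|^{-2m}\cdot\mathrm{Id}$, a piece $\sigma_{-2m-1}$ linear in the bundle connection $\omega$, and a piece $\sigma_{-2m-2}$ involving Riemannian curvature, $\nabla\omega$, and the endomorphism $E$. Passing to Riemannian normal coordinates at the chosen point and using $g_{ab}(x) = \delta_{ab} - \tfrac{1}{3}R_{acbd}x^c x^d + O(|x|^3)$ turns these subleading symbols into explicit polynomials in curvature. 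In parallel I would compute the symbol of $\widetilde{\nabla}_V\widetilde{\nabla}_W$: the leading part is $-V^a W^b\xi_a\xi_b\cdot\mathrm{Id}$, the order-one piece carries $-i(V^a\partial_a W^b)\xi_b$ together with connection contributions, and the order-zero piece records the iterated covariant derivative including curvature commutators.

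Applying $\sigma(AB)\sim \sum_\alpha \frac{(-i)^{|\alpha|}}{\alpha!}\partial_\xi^\alpha\sigma(A)\cdot\partial_x^\alpha\sigma(B)$ and collecting pieces of total order $-n$ produces three families: (i) $\sigma_2(\widetilde{\nabla}_V\widetilde{\nabla}_W)\cdot\sigma_{-2m-2}(\Delta_T^{-m})$, (ii) mixed derivative cross-terms of the form $\partial_{\xi_a}\sigma_2\cdot\partial_{x_a}\sigma_{-2m-1}$ and $\partial_{\xi_a}\partial_{\xi_b}\sigma_2\cdot\partial_{x_a}\partial_{x_b}\sigma_{-2m}$, and (iii) the lower-order parts of $\widetilde{\nabla}_V\widetilde{\nabla}_W$ contracted with $|\xi|^{-2m}$. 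I would then integrate over $S^{n-1}$ using the standard moments $\int\xi_a\xi_b\,d\xi = \frac{\upsilon_{n-1}}{n}\delta_{ab}$ and $\int\xi_a\xi_b\xi_c\xi_d\,d\xi = \frac{\upsilon_{n-1}}{n(n+2)}(\delta_{ab}\delta_{cd}+\delta_{ac}\delta_{bd}+\delta_{ad}\delta_{bc})$, with the spin trace on the rank $2^m$ bundle contributing the factor $2^m$ where appropriate. The Riemannian curvature contributions from (i), (ii), and the metric expansion reorganize: the $R_{acbd}V^aW^b$ pieces assemble into $\mathrm{Ric}(V,W)$, while the accompanying scalar traces furnish $-\tfrac{1}{2}R\,g(V,W)$, the sum being $\tfrac{\upsilon_{n-1}}{6}2^m G(V,W)$. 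The bundle-curvature commutators $[\widetilde{\nabla}_a,\widetilde{\nabla}_b] \ni F_{ab}$ generated in group (iii) produce $\tfrac{\upsilon_{n-1}}{2}F(V,W)$, and the $E$-contribution enters only through the $-E\,|\xi|^{-2m-2}$ piece contracted with $-V^aW^b\xi_a\xi_b$, yielding $\tfrac{1}{2}(\mathrm{Tr}\,E)\,g(V,W)$ after the sphere integral.

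The main obstacle will be the bookkeeping: identifying every contribution to $\sigma_{-n}$ and confirming that the delicate cancellation upgrades the raw combination of Ricci and scalar pieces into the Einstein tensor $G = \mathrm{Ric}-\tfrac{1}{2}Rg$ rather than some unrelated curvature scalar. The factor $\tfrac{1}{6}$, traceable to the $\tfrac{1}{3}$ in the normal-coordinate expansion of $g_{ab}$ weighted by the sphere-integral normalization, and the separation of the three structurally different terms $G(V,W)$, $F(V,W)$, $(\mathrm{Tr}\,E)g(V,W)$ as arising from disjoint sources in the symbol expansion, are where the computational care must be concentrated; once these are pinned down, summation over the three families and integration give the claimed formula.
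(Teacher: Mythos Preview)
The paper does not prove this lemma: it is quoted verbatim from reference [DL] (Dabrowski, Sitarz, Zalecki), as signalled by the \texttt{\textbackslash cite\{DL\}} attached to the lemma header, and is then used as a black box to derive Theorem~2.3. So there is no ``paper's own proof'' to compare against.

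Your outline is the standard route to such spectral-functional identities---expand the parametrix symbol in normal coordinates, apply the pseudodifferential composition formula, integrate over the cosphere using the even moments of $\xi$, and sort the outcome by geometric content---and it is almost certainly what the cited paper [DL] does. The identification of the three sources (metric curvature from the normal-coordinate expansion giving the Einstein piece, bundle curvature $F_{ab}$ from the commutator of $\widetilde{\nabla}$ giving the $F(V,W)$ piece, and the zeroth-order endomorphism $E$ giving the trace term) is correct in spirit, and your warning about the bookkeeping and the $\tfrac{1}{6}$ factor is well placed. One small caution: in your last paragraph you say the $E$-term yields $\tfrac{1}{2}(\mathrm{Tr}\,E)\,g(V,W)$ ``after the sphere integral,'' but a sphere integral always carries a volume factor $\upsilon_{n-1}$; the stated formula in the paper omits $\upsilon_{n-1}$ in front of the $\mathrm{Tr}\,E$ term, which is likely a transcription slip in the paper rather than a feature you should try to reproduce.
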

The aim of this section is to prove the following.
\begin{thm}
For the Laplace (type) operator with torsion $\Delta_{T}$, the Einstein functional equal to
\begin{align}
Wres\big(\widetilde{\nabla}_{V}\widetilde{\nabla}_{W}\Delta^{-m}_{T,E}\big)
=&\frac{2^{m+1}\pi^{m}}{6\Gamma(m)}\int_{M}\big(Ric(V,W)-\frac{1}{2}sg(V,W)\big) vol_{g}\nonumber\\
&+\int_{M}2^{m-1}
\big( -\frac{1}{4}R^{g}-\frac{3}{2}div^{g}(V)+\frac{3}{2}\parallel T\parallel^{2}+\frac{9}{2}\parallel V\parallel^{2}\big)g(V,W) vol_{g},
\end{align}
where $\int_{M}div(V) dVol_{M}=-\int_{\partial_{M}}g(n,V)dVol_{\partial_{M}}$,
$R_{g}$ denotes the curvature tensor and $s$ is the scalar curvature.
\end{thm}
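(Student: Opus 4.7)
The strategy is to reduce the theorem to a direct application of Lemma 2.2 to the Laplace-type operator $\Delta_{T,E}:=D_T^*D_T$ on the spinor bundle $S(TM)$, with the bundle endomorphism $E$ supplied by the generalized Lichnerowicz formula of Lemma 2.1.

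\emph{Step 1 (identify $E$).} From Lemma 2.1, I would write $D_T^*D_T=\Delta+E$ with
\begin{align*}
E=\frac{1}{4}R^g+\frac{3}{2}c(dT)-\frac{3}{4}\|T\|^2+\frac{n-1}{2}\mathrm{div}^g(V)+\Big(\frac{n-1}{2}\Big)^2(2-n)|V|^2+3(n-1)\big(c(T)c(V)+c(V_\rfloor T)\big),
\end{align*}
where $c(\cdot)$ denotes Clifford multiplication, and $\Delta$ is the connection Laplacian associated to the modified connection (2.9).

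\emph{Step 2 (compute $\mathrm{Tr}(E)$).} The pointwise spinor trace $\mathrm{Tr}_{S(TM)}c(\omega)$ vanishes for every homogeneous form $\omega$ of degree strictly between $0$ and $n$, so $c(dT)$ (a 3-form) and $c(V_\rfloor T)$ (a 2-form) contribute zero. The mixed product $c(T)c(V)$ expands via $c(\alpha)c(\beta)=c(\alpha\wedge\beta)-c(\iota_\alpha\beta)$ into a 4-form plus a 2-form, and the 2-form piece is traceless; only a possible top-degree component survives, which I would track separately. The scalar terms give
\[
\mathrm{Tr}(E)=2^m\Big(\frac{1}{4}R^g-\frac{3}{4}\|T\|^2+\frac{n-1}{2}\mathrm{div}^g(V)+\Big(\frac{n-1}{2}\Big)^2(2-n)|V|^2\Big).
\]

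\emph{Step 3 (assemble via Lemma 2.2).} With $\upsilon_{n-1}=2\pi^m/\Gamma(m)$ and $G(V,W)=Ric(V,W)-\tfrac{1}{2}sg(V,W)$, the Einstein summand of Lemma 2.2 immediately gives $\frac{2^{m+1}\pi^m}{6\Gamma(m)}\int_M G(V,W)\,vol_g$, matching the first line of the theorem. The remaining two summands $\frac{\upsilon_{n-1}}{2}\int_M F(V,W)\,vol_g+\frac{1}{2}\int_M\mathrm{Tr}(E)\,g(V,W)\,vol_g$ must combine to produce the $g(V,W)$-integrand of the theorem. Here the curvature-of-connection piece $F(V,W)=\mathrm{Tr}(V^aW^bF_{ab})$ comes from the torsion part of $\widetilde{\nabla}^S$ in (2.4); expanding $F_{ab}$ into its Clifford constituents and applying the same trace identities as in Step 2 reduces $F(V,W)$ to a $g(V,W)$-multiple of scalar invariants built from $R^g$, $\|T\|^2$, $|V|^2$ and $\mathrm{div}^g V$.

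\emph{Main obstacle.} The delicate point is precisely this final assembly: reconciling the sign conventions for the Laplacian between Lemmas 2.1 and 2.2, controlling any top-form contribution of $c(T)c(V)$ to $\mathrm{Tr}(E)$, and carefully evaluating $\mathrm{Tr}(V^aW^bF_{ab})$ so that the sum of the $F$-term and $\tfrac12\mathrm{Tr}(E)$ term produces exactly the coefficient $2^{m-1}\bigl(-\tfrac14 R^g-\tfrac32\mathrm{div}^g(V)+\tfrac32\|T\|^2+\tfrac92\|V\|^2\bigr)$ in front of $g(V,W)\,vol_g$. Once this bookkeeping is done, the divergence identity $\int_M \mathrm{div}(V)\,dVol_M=-\int_{\partial M}g(n,V)\,dVol_{\partial M}$ noted in the statement accounts for any boundary contribution, and the theorem follows.
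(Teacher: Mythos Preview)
Your plan is essentially the paper's own argument: apply Lemma~2.2 to $\Delta_T=D_T^*D_T$, compute $\mathrm{Tr}(E)$ using Clifford trace identities, and handle the curvature term $F(V,W)=\mathrm{Tr}(V^aW^bF_{ab})$. Two points where the paper is sharper than your outline:

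\textbf{(i) The $F$-term vanishes identically.} You expect $F(V,W)$ to reduce to a $g(V,W)$-multiple of scalar invariants that then combines with $\tfrac12\mathrm{Tr}(E)$. But $F_{ab}=-F_{ba}$, so $\mathrm{Tr}(F_{ab})$ is antisymmetric in $a,b$ and $F(V,W)=-F(W,V)$; it can never produce a $g(V,W)$-term. The paper verifies $\mathrm{Tr}^{S(TM)}(F_{e_a,e_b})=0$ directly in normal coordinates by splitting $F_{e_a,e_b}=e_a(\overline A(e_b))-e_b(\overline A(e_a))+[\overline A(e_a),\overline A(e_b)]-\overline A([e_a,e_b])$ and checking each piece has zero spinor trace. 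So the middle summand of Lemma~2.2 contributes nothing.

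\textbf{(ii) The correct $E$.} Because $F(V,W)=0$, the entire second line of the theorem must come from $\tfrac12\mathrm{Tr}(E)\,g(V,W)$ alone. Your $E$, read straight off Lemma~2.1, has the wrong sign for this: its scalar part is $\tfrac14 R^g-\tfrac34\|T\|^2+\cdots$, which cannot yield $-\tfrac14 R^g+\tfrac32\|T\|^2+\cdots$. The paper instead invokes the endomorphism $E$ from equation~(53) of \cite{PS1}, namely
\[
E=\Bigl(-\tfrac14 R^g-\tfrac32\,\mathrm{div}^g(V)+\tfrac32\|T\|^2+\tfrac92\|V\|^2\Bigr)\mathrm{Id}-\tfrac32\,dT-9\,T\cdot V-9\,V_\rfloor T,
\]
which is the potential in the sign convention required by Lemma~2.2. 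With this $E$ the Clifford-odd pieces again trace to zero (including $T\cdot V$, since after expanding $c(V)$ no degree-$0$ or degree-$n$ component survives), and $\tfrac12\mathrm{Tr}(E)=2^{m-1}(-\tfrac14 R^g-\tfrac32\mathrm{div}^g(V)+\tfrac32\|T\|^2+\tfrac92\|V\|^2)$ on the nose. Your ``main obstacle'' is thus resolved not by a cancellation between the $F$- and $E$-terms but by getting the sign convention for $E$ right from the start.
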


\begin{proof}
Let $T(X,\cdot,\cdot)=\sum_{ 1\leq i<j\leq n}T(X,e_{i},e_{j})e_{i}^{*}\wedge e_{j}^{*}$ and
$c(T(X,\cdot,\cdot))=\sum_{ 1\leq i<j\leq n}T(X,e_{i},e_{j})c(e_{i})c(e_{j})$, repeated application of (2.11) yields that
 \begin{align}
\widetilde{\nabla}_{X}\psi=&\nabla_{X}^{S(TM)}\psi+\frac{3}{2}T(X,\cdot,\cdot)\psi-\frac{n-1}{2}c(V)c(X)\psi
-\frac{n-1}{2}\langle V, X  \rangle\psi\nonumber\\
=&X\psi+\sigma(X)\psi+\frac{3}{2}\sum_{ 1\leq i<j\leq n}T(X,e_{i},e_{j})c(e_{i})c(e_{j})\nonumber\\
& -\frac{n-1}{2}c(V)c(X)\psi-\frac{n-1}{2}\langle V, X  \rangle\psi\nonumber\\
=&X\psi+\overline{A}(X)\psi,
\end{align}
where
 \begin{equation}
\sigma(X)=-\frac{1}{4}\sum_{s,t} \omega_{s,t}(X) c(e_s)c(e_t).
\end{equation}
Let $V=\sum_{a=1}^{n}V^{a}e_{a}$,$W=\sum_{b=1}^{n}W^{b}e_{b}$,
in view of that
 \begin{equation}
F(V,W)=Tr(V_{a}W_{b}F_{ab})=\sum_{a,b=1}^{n}V^{a}W^{b}Tr^{S(TM)}(F_{e_{a},e_{b}}),
\end{equation}
we obtain
 \begin{align}
F_{e_{a},e_{b}}=&(e_{a}+\overline{A}(e_{a}))(e_{b}+\overline{A}(e_{b}))-(e_{b}+\overline{A}(e_{b}))(e_{a}+\overline{A}(e_{a}))
-([e_{a},e_{a}]+\overline{A}([e_{a},e_{b}]))
\nonumber\\
=&e_{a}\circ \overline{A}(e_{b})+\overline{A}(e_{a})\circ e_{b}+\overline{A}(e_{a})A(e_{b})-e_{b}\circ \overline{A}(e_{a})
-\overline{A}(e_{b})\circ e_{a}\nonumber\\
& -\overline{A}(e_{b})\overline{A}(e_{a})-\overline{A}([e_{a},e_{b}])\nonumber\\
=&\overline{A}(e_{b})e_{a}+e_{a}(\overline{A}(e_{b}))+\overline{A}(e_{a})\circ e_{b}+\overline{A}(e_{a})\overline{A}(e_{b})
-\overline{A}(e_{a})\circ e_{b}-e_{b}(\overline{A}(e_{a}))\nonumber\\
& -\overline{A}(e_{b})e_{a}-\overline{A}(e_{b})\overline{A}(e_{a})-\overline{A}([e_{a},e_{b}])\nonumber\\
=&e_{a}(\overline{A}(e_{b}))-e_{b}(\overline{A}(e_{a}))+\overline{A}(e_{a})\overline{A}(e_{b})
-\overline{A}(e_{b})\overline{A}(e_{a})-\overline{A}([e_{a},e_{b}]).
\end{align}
Also, straightforward computations yield
 \begin{align}
\mathrm{Tr}^{S(TM)}\big(e_{a}(\overline{A}(e_{b}))\big)
=&\mathrm{Tr}^{S(TM)}\Big[e_{a}\Big(-\frac{1}{4}\sum_{s,t} \omega_{s,t}(e_{b}) c(e_s)c(e_t)
+\frac{3}{2}\sum_{ 1\leq i<j\leq n}T(e_{b},e_{i},e_{j})c(e_{i})c(e_{j})\nonumber\\
&-\frac{n-1}{2}c(V)c(e_{b})-\frac{n-1}{2}\langle V, e_{b} \rangle\Big)\Big]\nonumber\\
=&\mathrm{Tr}^{S(TM)}\Big[-\frac{1}{4}\sum_{s,t} e_{a}(\omega_{s,t}(e_{b})) c(e_s)c(e_t)
+\frac{3}{2}\sum_{ 1\leq i<j\leq n}e_{a}(T(e_{b},e_{i},e_{j}))c(e_{i})c(e_{j})\nonumber\\
&-\frac{n-1}{2}\sum_{k=1}^{n}e_{a}(V^{k})c(e_{k})c(e_{b})-\frac{n-1}{2}e_{a}(V^{b})\Big]\nonumber\\
=&\mathrm{Tr}^{S(TM)}\Big[-\frac{1}{8}\sum_{s=t} R^{TM}_{abst}c(e_s)c(e_t)
+\frac{3}{2}\sum_{ 1\leq i<j\leq n}e_{a}(T(e_{b},e_{i},e_{j}))c(e_{i})c(e_{j})\nonumber\\
&-\frac{n-1}{2}\sum_{k=1}^{n}e_{a}(c(V^{k}))c(e_{k})c(e_{b})-\frac{n-1}{2}e_{a}(V^{b})\Big]\nonumber\\
=&-\frac{1}{8}\sum_{s=t} R^{TM}_{abst}(-\delta_{st})Tr^{S(TM)}(Id)-\frac{n-1}{2}e_{a}(V^{b})Tr^{S(TM)}(Id)\nonumber\\
&-\frac{n-1}{2}\sum_{k=1}^{n}e_{a}(c(V^{k}))(-\delta_{kb})Tr^{S(TM)}(Id)\nonumber\\
=&0,
\end{align}
where
 \begin{equation}
e_{a}(\omega_{s,t}(e_{b}))(x_0)=\frac{\partial}{\partial y^{a}}(\omega_{s,t}(e_{b}))(x_0)
=\frac{\partial}{\partial y^{a}}(H_{b,s,t})(x_0)=\frac{1}{2}R^{TM}_{abst}(x_0),
\end{equation}
where we take the normal coordinate about $x_0$, it follows that
\begin{align}
&\mathrm{Tr}^{S(TM)}\big(\overline{A}(e_{a})\overline{A}(e_{b})-\overline{A}(e_{b})\overline{A}(e_{a})\big)\nonumber\\
=&\mathrm{Tr}^{S(TM)}\Big[ \Big(\frac{3}{2}\sum_{ 1\leq i<j\leq n}T(e_{a},e_{i},e_{j})c(e_{i})c(e_{j})
-\frac{n-1}{2}c(V)c(e_{a})-\frac{n-1}{2}\langle V, e_{a} \rangle\Big)\nonumber\\
&\times \Big(\frac{3}{2}\sum_{ 1\leq i<j\leq n}T(e_{b},e_{i},e_{j})c(e_{i})c(e_{j})
-\frac{n-1}{2}c(V)c(e_{b})-\frac{n-1}{2}\langle V, e_{b} \rangle\Big)\Big]\nonumber\\
&-\mathrm{Tr}^{S(TM)}\Big[\Big(\frac{3}{2}\sum_{ 1\leq i<j\leq n}T(e_{b},e_{i},e_{j})c(e_{i})c(e_{j})
-\frac{n-1}{2}c(V)c(e_{b})-\frac{n-1}{2}\langle V, e_{b} \rangle\Big)
\nonumber\\
&\times \Big(\frac{3}{2}\sum_{ 1\leq i<j\leq n}T(e_{a},e_{i},e_{j})c(e_{i})c(e_{j})
-\frac{n-1}{2}c(V)c(e_{a})-\frac{n-1}{2}\langle V, e_{a} \rangle\Big))\Big]\nonumber\\
=&0,
\end{align}
and
\begin{align}
&~~~~\mathrm{Tr}^{S(TM)}\big(\overline{A}([e_{a},e_{b}])\big)(x_{0})\nonumber\\
=&\mathrm{Tr}^{S(TM)} \Big(\sigma([e_{a},e_{b}])+\frac{3}{2}\sum_{ 1\leq i<j\leq n}T([e_{a},e_{b}],e_{i},e_{j})c(e_{i})c(e_{j})
-\frac{n-1}{2}c(V)c([e_{a},e_{b}])\nonumber\\
&-\frac{n-1}{2}\langle V, [e_{a},e_{b}] \rangle\Big)\Big)(x_{0})\nonumber\\
=&0.
\end{align}
Let $\Delta_{T}=\Delta+E$, by (53) in \cite{PS1}, we have
 \begin{equation}
E=\Big(-\frac{1}{4}R^{g}-\frac{3}{2}div^{g}(V)+\frac{3}{2}\parallel T\parallel^{2}+\frac{9}{2}\parallel V\parallel^{2}\Big)Id_{S(TM)}
-\frac{3}{2}dT-9T\cdot V-9V_{\rfloor}T,
\end{equation}
Now, what is left is to show that
\begin{align}
\mathrm{Tr}^{S(TM)}(E)=&2^{\frac{n}{2}}\Big(-\frac{1}{4}R^{g}-\frac{3}{2}div^{g}(V)+\frac{3}{2}\parallel T\parallel^{2}
+\frac{9}{2}\parallel V\parallel^{2}\Big)\nonumber\\
&-Tr^{S(TM)}(\frac{3}{2}dT)-9Tr^{S(TM)}(T\cdot V)-9Tr^{S(TM)}(V_{\rfloor}T)\nonumber\\
=&2^{\frac{n}{2}}\Big(-\frac{1}{4}R^{g}-\frac{3}{2}div^{g}(V)+\frac{3}{2}\parallel T\parallel^{2}
+\frac{9}{2}\parallel V\parallel^{2}\Big)\nonumber\\
&-\frac{3}{2}Tr^{S(TM)}\big(\sum_{ 1\leq \alpha<j<k<l\leq n}dT(e_{\alpha},e_{j},e_{k},e_{l})c(e_{\alpha})c(e_{j})c(e_{k})c(e_{l})\big)\nonumber\\
&-9 Tr^{S(TM)}\big(\sum_{ 1\leq \alpha<j<k\leq n}T(e_{\alpha},e_{j},e_{k})c(e_{\alpha})c(e_{j})c(e_{k})c(V)\big)\nonumber\\
&-9Tr^{S(TM)}\big(\sum_{ 1\leq i<j\leq n}T(V,e_{i},e_{j})c(e_{i})c(e_{j})\big)\nonumber\\
=&2^{\frac{n}{2}}\Big(-\frac{1}{4}R^{g}-\frac{3}{2}div^{g}(V)+\frac{3}{2}\parallel T\parallel^{2}
+\frac{9}{2}\parallel V\parallel^{2}\Big)\nonumber\\
&-9\sum_{l=1}^{n}V_{l}\sum_{ 1\leq \alpha<j<k\leq n}T(e_{\alpha},e_{j},e_{k})Tr^{S(TM)}\big(c(e_{\alpha})c(e_{j})c(e_{k})c(e_{l})\big)\nonumber\\
=&2^{\frac{n}{2}}\Big(-\frac{1}{4}R^{g}-\frac{3}{2}div^{g}(V)+\frac{3}{2}\parallel T\parallel^{2}
+\frac{9}{2}\parallel V\parallel^{2}\Big).
\end{align}
Summing up (2.18)-(2.21) leads to the desired equality (2.11), and the proof of
the Theorem is complete.
\end{proof}

\section{Noncommutative residue for manifold with boundary}

In this section, to define the noncommutative residue for Dirac operator with torsion,
 some basic facts and formulae about Boutet de Monvel's calculus can be find in Sec.2 in \cite{Wa1}.
Let $M$ be an n-dimensional compact oriented manifold with boundary $\partial M$.
Some basic facts and formulae about Boutet de Monvel's calculus are recalled as follows.

Let $$ F:L^2({\bf R}_t)\rightarrow L^2({\bf R}_v);~F(u)(v)=\int e^{-ivt}u(t)\texttt{d}t$$ denote the Fourier transformation and
$\varphi(\overline{{\bf R}^+}) =r^+\varphi({\bf R})$ (similarly define $\varphi(\overline{{\bf R}^-}$)), where $\varphi({\bf R})$
denotes the Schwartz space and
  \begin{equation}
r^{+}:C^\infty ({\bf R})\rightarrow C^\infty (\overline{{\bf R}^+});~ f\rightarrow f|\overline{{\bf R}^+};~
 \overline{{\bf R}^+}=\{x\geq0;x\in {\bf R}\}.
\end{equation}
We define $H^+=F(\varphi(\overline{{\bf R}^+}));~ H^-_0=F(\varphi(\overline{{\bf R}^-}))$ which are orthogonal to each other. We have the following
 property: $h\in H^+~(H^-_0)$ iff $h\in C^\infty({\bf R})$ which has an analytic extension to the lower (upper) complex
half-plane $\{{\rm Im}\xi<0\}~(\{{\rm Im}\xi>0\})$ such that for all nonnegative integer $l$,
 \begin{equation}
\frac{\texttt{d}^{l}h}{\texttt{d}\xi^l}(\xi)\sim\sum^{\infty}_{k=1}\frac{\texttt{d}^l}{\texttt{d}\xi^l}(\frac{c_k}{\xi^k})
\end{equation}
as $|\xi|\rightarrow +\infty,{\rm Im}\xi\leq0~({\rm Im}\xi\geq0)$.

 Let $H'$ be the space of all polynomials and $H^-=H^-_0\bigoplus H';~H=H^+\bigoplus H^-.$ Denote by $\pi^+~(\pi^-)$ respectively the
 projection on $H^+~(H^-)$. For calculations, we take $H=\widetilde H=\{$rational functions having no poles on the real axis$\}$ ($\tilde{H}$
 is a dense set in the topology of $H$). Then on $\tilde{H}$,
 \begin{equation}
\pi^+h(\xi_0)=\frac{1}{2\pi i}\lim_{u\rightarrow 0^{-}}\int_{\Gamma^+}\frac{h(\xi)}{\xi_0+iu-\xi}\texttt{d}\xi,
\end{equation}
where $\Gamma^+$ is a Jordan close curve included ${\rm Im}\xi>0$ surrounding all the singularities of $h$ in the upper half-plane and
$\xi_0\in {\bf R}$. Similarly, define $\pi^{'}$ on $\tilde{H}$,
 \begin{equation}
\pi'h=\frac{1}{2\pi}\int_{\Gamma^+}h(\xi)\texttt{d}\xi.
\end{equation}
So, $\pi'(H^-)=0$. For $h\in H\bigcap L^1(R)$, $\pi'h=\frac{1}{2\pi}\int_{R}h(v)\texttt{d}v$ and for $h\in H^+\bigcap L^1(R)$, $\pi'h=0$.
Denote by $\mathcal{B}$ Boutet de Monvel's algebra (for details, see Section 2 of \cite{Wa1}).

An operator of order $m\in {\bf Z}$ and type $d$ is a matrix
$$A=\left(\begin{array}{lcr}
  \pi^+P+G  & K  \\
   T  &  S
\end{array}\right):
\begin{array}{cc}
\   C^{\infty}(X,E_1)\\
 \   \bigoplus\\
 \   C^{\infty}(\partial{X},F_1)
\end{array}
\longrightarrow
\begin{array}{cc}
\   C^{\infty}(X,E_2)\\
\   \bigoplus\\
 \   C^{\infty}(\partial{X},F_2)
\end{array}.
$$
where $X$ is a manifold with boundary $\partial X$ and
$E_1,E_2~(F_1,F_2)$ are vector bundles over $X~(\partial X
)$.~Here,~$P:C^{\infty}_0(\Omega,\overline {E_1})\rightarrow
C^{\infty}(\Omega,\overline {E_2})$ is a classical
pseudodifferential operator of order $m$ on $\Omega$, where
$\Omega$ is an open neighborhood of $X$ and
$\overline{E_i}|X=E_i~(i=1,2)$. $P$ has an extension:
$~{\cal{E'}}(\Omega,\overline {E_1})\rightarrow
{\cal{D'}}(\Omega,\overline {E_2})$, where
${\cal{E'}}(\Omega,\overline {E_1})~({\cal{D'}}(\Omega,\overline
{E_2}))$ is the dual space of $C^{\infty}(\Omega,\overline
{E_1})~(C^{\infty}_0(\Omega,\overline {E_2}))$. Let
$e^+:C^{\infty}(X,{E_1})\rightarrow{\cal{E'}}(\Omega,\overline
{E_1})$ denote extension by zero from $X$ to $\Omega$ and
$r^+:{\cal{D'}}(\Omega,\overline{E_2})\rightarrow
{\cal{D'}}(\Omega, {E_2})$ denote the restriction from $\Omega$ to
$X$, then define
$$\pi^+P=r^+Pe^+:C^{\infty}(X,{E_1})\rightarrow {\cal{D'}}(\Omega,
{E_2}).$$
In addition, $P$ is supposed to have the
transmission property; this means that, for all $j,k,\alpha$, the
homogeneous component $p_j$ of order $j$ in the asymptotic
expansion of the
symbol $p$ of $P$ in local coordinates near the boundary satisfies:
$$\partial^k_{x_n}\partial^\alpha_{\xi'}p_j(x',0,0,+1)=
(-1)^{j-|\alpha|}\partial^k_{x_n}\partial^\alpha_{\xi'}p_j(x',0,0,-1),$$
then $\pi^+P:C^{\infty}(X,{E_1})\rightarrow C^{\infty}(X,{E_2})$
by Section 2.1 of \cite{Wa1}.

Let $M$ be a compact manifold with boundary $\partial M$. We assume that the metric $g^{M}$ on $M$ has
the following form near the boundary
 \begin{equation}
 g^{M}=\frac{1}{h(x_{n})}g^{\partial M}+\texttt{d}x _{n}^{2} ,
\end{equation}
where $g^{\partial M}$ is the metric on $\partial M$. Let $U\subset
M$ be a collar neighborhood of $\partial M$ which is diffeomorphic $\partial M\times [0,1)$. By the definition of $h(x_n)\in C^{\infty}([0,1))$
and $h(x_n)>0$, there exists $\tilde{h}\in C^{\infty}((-\varepsilon,1))$ such that $\tilde{h}|_{[0,1)}=h$ and $\tilde{h}>0$ for some
sufficiently small $\varepsilon>0$. Then there exists a metric $\hat{g}$ on $\hat{M}=M\bigcup_{\partial M}\partial M\times
(-\varepsilon,0]$ which has the form on $U\bigcup_{\partial M}\partial M\times (-\varepsilon,0 ]$
 \begin{equation}
\hat{g}=\frac{1}{\tilde{h}(x_{n})}g^{\partial M}+\texttt{d}x _{n}^{2} ,
\end{equation}
such that $\hat{g}|_{M}=g$.
We fix a metric $\hat{g}$ on the $\hat{M}$ such that $\hat{g}|_{M}=g$.
Now we recall the main theorem in \cite{FGLS}.
\begin{thm}\label{th:32}{\bf(Fedosov-Golse-Leichtnam-Schrohe)}
 Let $X$ and $\partial X$ be connected, ${\rm dim}X=n\geq3$,
 $A=\left(\begin{array}{lcr}\pi^+P+G &   K \\
T &  S    \end{array}\right)$ $\in \mathcal{B}$ , and denote by $p$, $b$ and $s$ the local symbols of $P,G$ and $S$ respectively.
 Define:
 \begin{align}
{\rm{\widetilde{Wres}}}(A)=&\int_X\int_{\bf S}{\mathrm{Tr}}_E\left[p_{-n}(x,\xi)\right]\sigma(\xi)dx \nonumber\\
&+2\pi\int_ {\partial X}\int_{\bf S'}\left\{{\mathrm{Tr}}_E\left[({\mathrm{Tr}}b_{-n})(x',\xi')\right]+{\mathrm{Tr}}
_F\left[s_{1-n}(x',\xi')\right]\right\}\sigma(\xi')dx',
\end{align}
Then~~ a) ${\rm \widetilde{Wres}}([A,B])=0 $, for any
$A,B\in\mathcal{B}$;~~ b) It is a unique continuous trace on
$\mathcal{B}/\mathcal{B}^{-\infty}$.
\end{thm}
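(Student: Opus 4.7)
The plan is to follow the original strategy of Fedosov, Golse, Leichtnam and Schrohe, separating the trace property (a) from the uniqueness (b), and within (a) tracking the interior and boundary contributions so that the boundary term in the formula appears as the Stokes defect of the interior Wodzicki calculation.

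First I would verify that the two densities in the definition are invariantly defined: the interior one, $\mathrm{Tr}_E(p_{-n}(x,\xi))\sigma(\xi)$, is the classical Wodzicki density on $S^{*}X$; the boundary ones, $\mathrm{Tr}_E((\mathrm{Tr}\,b_{-n})(x',\xi'))$ and $\mathrm{Tr}_F(s_{1-n}(x',\xi'))$, transform as densities on $S^{*}\partial X$ once the partial operator trace $\mathrm{Tr}$ in the normal variable has been carried out for the Green part. For (a), writing
\begin{equation*}
A=\begin{pmatrix}\pi^{+}P+G & K \\ T & S\end{pmatrix},\qquad B=\begin{pmatrix}\pi^{+}Q+G' & K' \\ T' & S'\end{pmatrix},
\end{equation*}
I would expand $AB-BA$ entry by entry. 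The upper-left entry decomposes as $\pi^{+}[P,Q]+L(P,Q)+\{\text{Green commutators}\}$, where $L(P,Q)$ is the Leibniz-type singular Green remainder measuring the failure of $P\mapsto\pi^{+}P$ to be multiplicative in $\mathcal{B}$; the off-diagonal and lower-right entries produce analogous sums of Poisson, trace and boundary pseudodifferential commutators.

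The key step is the following residue cancellation. Wodzicki's classical calculation of $\int_{X}\int_{\mathbf{S}}\mathrm{Tr}_E(\sigma_{-n}[P,Q])\sigma(\xi)\,dx$, which vanishes by Stokes on a closed manifold, now leaves a residual boundary integral; this residual must cancel exactly against the contributions coming from $L(P,Q)$ and from the cross terms such as $KT'-K'T$ and $GK'-KS'$ in the other matrix entries. The cancellation uses the contour representations of $\pi^{+}$ and $\pi'$ given in (3.3)--(3.4), the transmission property of $P$ and $Q$, and residue calculus in the normal cotangent variable $\xi_{n}$; the coefficient $2\pi$ in front of the boundary integral is produced automatically by these contour integrals. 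For (b), any continuous trace $\tau$ on $\mathcal{B}/\mathcal{B}^{-\infty}$ must vanish on commutators and on smoothing operators. Restricting $\tau$ to operators supported in the interior yields a trace on the ordinary pseudodifferential algebra, which by Wodzicki's uniqueness theorem is a scalar multiple of the classical noncommutative residue; restricting to operators of the form $\mathrm{diag}(0,S)$ yields another scalar multiple of the Wodzicki residue on $\partial X$. The two constants are then forced to coincide by evaluating $\tau$ on Poisson--trace compositions $KT$, whose interior and boundary contributions are linked through the same cancellation established in (a).

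The main obstacle is part (a): the entrywise bookkeeping of the Leibniz remainder $L(P,Q)$ together with the mixed Green, Poisson and trace operator commutators, combined with the residue computation in $\xi_{n}$ that must reproduce exactly the boundary integrand $\mathrm{Tr}_E[(\mathrm{Tr}\,b_{-n})]+\mathrm{Tr}_F[s_{1-n}]$ with coefficient $2\pi$. Once this delicate cancellation is verified, (b) follows from Wodzicki's uniqueness theorem applied separately in the interior and on $\partial X$, with the coupling constant fixed by consistency with the identity proved in (a).
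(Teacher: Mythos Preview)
The paper does not prove this theorem at all: it is merely \emph{recalled} from \cite{FGLS} (``Now we recall the main theorem in \cite{FGLS}'') and used as a black box for the computations in Sections~4 and~5. There is therefore no ``paper's own proof'' to compare your proposal against.

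Your outline does track the structure of the original Fedosov--Golse--Leichtnam--Schrohe argument reasonably well --- splitting the trace property from uniqueness, expanding the commutator entrywise, identifying the Leibniz defect $L(P,Q)$ of $\pi^{+}$, and invoking Wodzicki uniqueness on the interior and on $\partial X$ separately --- so as a sketch of the actual proof in \cite{FGLS} it is broadly accurate. But be aware that the delicate part you flag, the exact cancellation between the Stokes boundary term of the interior Wodzicki density and the contributions from $L(P,Q)$ and the Green/Poisson/trace cross terms, occupies the bulk of \cite{FGLS} and requires a careful symbolic analysis of the singular Green calculus (the operator-valued ``Tr'' in the normal variable, the twisted homogeneity of boundary symbol components, and the precise form of the boundary density); your sketch names the ingredients but does not carry out this computation. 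For the purposes of the present paper none of this is needed: the theorem is simply quoted.
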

Let $p_{1},p_{2}$ be nonnegative integers and $p_{1}+p_{2}\leq n$,
 denote by $\sigma_{l}(A)$ the $l$-order symbol of an operator $A$,
 an application of (3.5) and (3.6) in \cite{Wa1} shows that
\begin{defn} Lower-dimensional volumes of spin manifolds with boundary with torsion are defined by
   \begin{equation}\label{}
   Vol_{n}^{\{p_{1},p_{2}\}}M:=\widetilde{Wres}[\pi^{+}(\widetilde{\nabla}_{X}\widetilde{\nabla}_{Y}(D_{T}^{*}D_{T})^{- p_{1}})
    \circ\pi^{+}((D_{T}^{*}D_{T})^{-1})^{p_{2}}].
\end{equation}
where $\pi^{+}(\widetilde{\nabla}_{X}\widetilde{\nabla}_{Y}(D_{T}^{*}D_{T})^{- p_{1}})$, $\pi^{+}((D_{T}^{*}D_{T})^{-1})^{p_{2}}$ are
 elements in Boutet de Monvel's algebra\cite{Wa3}.
\end{defn}
 For Dirac operators
 with torsion $\widetilde{\nabla}_{X}\widetilde{\nabla}_{Y}(D_{T}^{*}D_{T})^{-1}$ and $(D_{T}^{*}D_{T})^{-1}$,
 denote by $\sigma_{l}(A)$ the $l$-order symbol of an operator A. An application of (2.1.4) in \cite{Wa1} shows that
\begin{align}
&\widetilde{Wres}[\pi^{+}(\widetilde{\nabla}_{X}\widetilde{\nabla}_{Y}(D_{T}^{*}D_{T})^{-1})^{p_{1}}
\circ\pi^{+}(D_{T}^{*}D_{T})^{-p_{2}}]\nonumber\\
&=\int_{M}\int_{|\xi|=1}\mathrm{Tr}_{S(TM)}
  [\sigma_{-n}(\widetilde{\nabla}_{X}\widetilde{\nabla}_{Y}(D_{T}^{*}D_{T})^{-p_{1}}
  \circ (D_{T}^{*}D_{T})^{-p_{2}})]\sigma(\xi)\texttt{d}x+\int_{\partial M}\Phi,
\end{align}
where
 \begin{align}
\Phi=&\int_{|\xi'|=1}\int_{-\infty}^{+\infty}\sum_{j,k=0}^{\infty}\sum \frac{(-i)^{|\alpha|+j+k+\ell}}{\alpha!(j+k+1)!}
\mathrm{Tr}_{S(TM)}[\partial_{x_{n}}^{j}\partial_{\xi'}^{\alpha}\partial_{\xi_{n}}^{k}\sigma_{r}^{+}
((\widetilde{\nabla}_{X}\widetilde{\nabla}_{Y}(D_{T}^{*}D_{T})^{-p_{1}})(x',0,\xi',\xi_{n})\nonumber\\
&\times\partial_{x'}^{\alpha}\partial_{\xi_{n}}^{j+1}\partial_{x_{n}}^{k}\sigma_{l}((D_{T}^{*}D_{T})^{-p_{2}})(x',0,\xi',\xi_{n})]
\texttt{d}\xi_{n}\sigma(\xi')\texttt{d}x' ,
\end{align}
and the sum is taken over $r-k+|\alpha|+\ell-j-1=-n,r\leq-p_{1},\ell\leq-p_{2}$.

For Dirac operators
 with torsion  $\widetilde{\nabla}_{X}\widetilde{\nabla}_{Y}D_{T}^{-1}$ and $(D_{T}^{*}D_{T}D_{T}^{*})^{-1}$,
 similarly we have
\begin{align}
&\widetilde{Wres}[\pi^{+}(\widetilde{\nabla}_{X}\widetilde{\nabla}_{Y}(D_{T})^{-1})^{p_{1}}
\circ\pi^{+}( D_{T}^{*}D_{T}D_{T}^{*})^{-p_{2}}]\nonumber\\
&=\int_{M}\int_{|\xi|=1}\mathrm{Tr}_{S(TM)}
  [\sigma_{-n}(\widetilde{\nabla}_{X}\widetilde{\nabla}_{Y}(D_{T})^{-p_{1}}
  \circ (D_{T}^{*}D_{T}D_{T}^{*})^{-p_{2}})]\sigma(\xi)\texttt{d}x+\int_{\partial M}\widetilde{\Phi},
\end{align}
where
 \begin{align}
\widetilde{\Phi}=&\int_{|\xi'|=1}\int_{-\infty}^{+\infty}\sum_{j,k=0}^{\infty}\sum \frac{(-i)^{|\alpha|+j+k+\ell}}{\alpha!(j+k+1)!}
\mathrm{Tr}_{S(TM)}[\partial_{x_{n}}^{j}\partial_{\xi'}^{\alpha}\partial_{\xi_{n}}^{k}\sigma_{r}^{+}
((\widetilde{\nabla}_{X}\widetilde{\nabla}_{Y}(D_{T})^{-p_{1}})(x',0,\xi',\xi_{n})\nonumber\\
&\times\partial_{x'}^{\alpha}\partial_{\xi_{n}}^{j+1}\partial_{x_{n}}^{k}\sigma_{l}((D_{T}^{*}D_{T}D_{T}^{*})^{-p_{2}})(x',0,\xi',\xi_{n})]
\texttt{d}\xi_{n}\sigma(\xi')\texttt{d}x' ,
\end{align}
and the sum is taken over $r-k+|\alpha|+\ell-j-1=-n,r\leq-p_{1},\ell\leq-p_{2}$.

 \section{Residue for Dirac operators
 with torsion $\widetilde{\nabla}_{X}\widetilde{\nabla}_{Y}(D_{T}^{*}D_{T})^{-1}$ and $(D_{T}^{*}D_{T})^{-1}$ }

In this section, we compute the lower dimensional volume for 4-dimension compact manifolds with boundary and get a
Kastler-Kalau-Walze type formula in this case.
 We will consider $D_{T}^{*}D_{T}$(since $D_{T}$ is not self-adjoint in general).
Since $[\sigma_{-4}(\widetilde{\nabla}_{X}\widetilde{\nabla}_{Y}(D_{T}^{*}D_{T})^{-1}
  \circ (D_{T}^{*}D_{T})^{-1})]|_{M}$ has
the same expression as $[\sigma_{-4}(\widetilde{\nabla}_{X}\widetilde{\nabla}_{Y}(D_{T}^{*}D_{T})^{-1}
  \circ (D_{T}^{*}D_{T})^{-1})]|_{M}$ in the case of manifolds without boundary,
so locally we can use Theorem 2.4 to compute the first term.
\begin{thm}
 Let M be a 4-dimensional compact manifold without boundary and $\widetilde{\nabla}$ be an orthogonal
connection with torsion. Then we get the volumes  associated to $\widetilde{\nabla}_{X}\widetilde{\nabla}_{Y}(D_{F}^{*}D_{F})^{-1}$
and $D_{T}^{*}D_{T}$ on compact manifolds without boundary
 \begin{align}
&Wres[\sigma_{-4}(\widetilde{\nabla}_{X}\widetilde{\nabla}_{Y}(D_{T}^{*}D_{T})^{-1}
  \circ (D_{T}^{*}D_{T})^{-1})]\nonumber\\
=&\frac{4\pi^{2}}{3}\int_{M}\Big(Ric(X,Y)-\frac{1}{2}sg(X,Y)\Big) vol_{g}\nonumber\\
&+\int_{M}
\Big( -\frac{1}{2}R^{g}-3\mathrm{div}^{g}(X)+3\parallel T\parallel^{2}+9\parallel X\parallel^{2}\Big)g(X,Y) vol_{g},
\end{align}
where $R_{g}$ denotes the curvature tensor and $s$ is the scalar curvature.
\end{thm}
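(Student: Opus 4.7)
The plan is to recognize that Theorem 4.1 is a direct specialization of Theorem 2.4 to the case $n=4$, $m=2$. First I would observe the key identification: since $\Delta_{T} := D_{T}^{*}D_{T}$, the composition $(D_{T}^{*}D_{T})^{-1}\circ (D_{T}^{*}D_{T})^{-1} = (D_{T}^{*}D_{T})^{-2} = \Delta_{T}^{-2}$, and in dimension $n=4$ we have $m = n/2 = 2$, so this operator is precisely $\Delta_{T}^{-m}$. With $V = X$ and $W = Y$, Theorem 2.4 therefore applies verbatim and computes $Wres\big[\widetilde{\nabla}_{X}\widetilde{\nabla}_{Y}(D_{T}^{*}D_{T})^{-1}\circ (D_{T}^{*}D_{T})^{-1}\big]$.

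Second, I would substitute $m = 2$ into the formula of Theorem 2.4. The Einstein coefficient becomes $\frac{2^{m+1}\pi^{m}}{6\Gamma(m)} = \frac{8\pi^{2}}{6} = \frac{4\pi^{2}}{3}$, which matches the first line of (4.1). The trace coefficient becomes $2^{m-1} = 2$, and distributing this through the bracketed factor $\big(-\frac{1}{4}R^{g}-\frac{3}{2}\mathrm{div}^{g}(V)+\frac{3}{2}\|T\|^{2}+\frac{9}{2}\|V\|^{2}\big)$ yields exactly $\big(-\frac{1}{2}R^{g}-3\,\mathrm{div}^{g}(X)+3\|T\|^{2}+9\|X\|^{2}\big)$, which is the bracketed factor on the second line of (4.1).

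Third, I would reconfirm that the curvature-trace contribution $F(X,Y) = \mathrm{Tr}(X_{a}Y_{b}F_{ab})$ vanishes in the present dimension. The vanishing was established in equations (2.16)--(2.20) using the Clifford trace identities (in particular $\mathrm{Tr}^{S(TM)}\big(c(e_{i})c(e_{j})\big) = -2^{n/2}\delta_{ij}$, together with the vanishing of traces of odd Clifford products and of the relevant commutators). These arguments are dimension-independent and apply verbatim for $n=4$, so the middle term of Lemma 2.3 drops out and only the Einstein-tensor and $\mathrm{Tr}(E)$ contributions survive.

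There is no real obstacle here: Theorem 4.1 is a specialization rather than a new calculation. The only step worth emphasising is the first identification, namely that the composition of two copies of $(D_{T}^{*}D_{T})^{-1}$ coincides with $\Delta_{T}^{-m}$ precisely when $n = 2m = 4$, after which Theorem 2.4 delivers the formula with the coefficients computed above. The result will later be paired, in the subsequent boundary computation, with the $\int_{\partial M}\Phi$ term from (3.9).
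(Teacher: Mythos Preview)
Your proposal is correct and matches the paper's own approach: the paper states just before Theorem 4.1 that one can ``use Theorem 2.4 to compute the first term,'' which is precisely the specialization $n=4$, $m=2$ of the Einstein functional formula that you carry out explicitly. Your verification of the coefficients $\frac{2^{m+1}\pi^m}{6\Gamma(m)}=\frac{4\pi^2}{3}$ and $2^{m-1}=2$, together with the observation that the $F(X,Y)$ term vanishes by the trace computations (2.16)--(2.20), fills in the details the paper leaves implicit.
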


So we only need to compute $\int_{\partial M}\Phi$.
 Recall the definition of the Dirac operator D in \cite{Y}. Denote by $\sigma_{l}(A)$ the $l$-order symbol of
an operator A. In the local coordinates $\{x_{i}; 1\leq i\leq n\}$ and the fixed orthonormal frame
$\{\widetilde{e_{1}},\cdots, \widetilde{e_{n}}\}$, the connection matrix $(\omega_{s,t})$ is defined by
\begin{equation}
\widetilde{\nabla}(\widetilde{e_{1}},\cdots, \widetilde{e_{n}})=(\widetilde{e_{1}},\cdots, \widetilde{e_{n}})(\omega_{s,t}).
\end{equation}
The Dirac operator
\begin{equation}
D=\sum_{i=1}^{n}c(\widetilde{e_{i}})[\widetilde{e_{i}}-\frac{1}{4}\sum_{s,t}\omega_{s,t}(\widetilde{e_{i}})c(\widetilde{e_{s}})c(\widetilde{e_{t}})],
\end{equation}
where $c(\widetilde{e_{i}})$ denotes the Clifford action. Then
\begin{align}
D_{T}=&\sum_{i=1}^{n}c(\widetilde{e_{i}})[\widetilde{e_{i}}-\frac{1}{4}\sum_{s,t}\omega_{s,t}(\widetilde{e_{i}})c(\widetilde{e_{s}})
c(\widetilde{e_{t}})]
  +\frac{1}{4}\sum_{i\neq s\neq t}A_{ist}
  c(\widetilde{e_{i}})c(\widetilde{e_{s}})c(\widetilde{e_{t}})  \nonumber\\
  &+\frac{1}{4}\sum_{i, s, t}[-A_{iit}c(\widetilde{e_{t}})
  +A_{isi}c(\widetilde{e_{s}}) -A_{iss}c(\widetilde{e_{i}})+2A_{iii}c(\widetilde{e_{i}})], \\
D_{T}^{*}=&\sum_{i=1}^{n}c(\widetilde{e_{i}})[\widetilde{e_{i}}-\frac{1}{4}\sum_{s,t}\omega_{s,t}(\widetilde{e_{i}})c(\widetilde{e_{s}})
c(\widetilde{e_{t}})]
  +\frac{1}{4}\sum_{i\neq s\neq t}A_{ist}
  c(\widetilde{e_{i}})c(\widetilde{e_{s}})c(\widetilde{e_{t}})  \nonumber\\
  &-\frac{1}{4}\sum_{i, s, t}[-A_{iit}c(\widetilde{e_{t}})
  +A_{isi}c(\widetilde{e_{s}}) -A_{iss}c(\widetilde{e_{i}})+2A_{iii}c(\widetilde{e_{i}})],
  \end{align}
and
\begin{align}
\sigma_{1}(D_{T})=&\sigma_{1}(D_{T}^{*})=\sqrt{-1}c(\xi);\\
\sigma_{0}(D_{T})=&-\frac{1}{4}\sum_{i,s,t}\omega_{s,t}(\widetilde{e_{i}})c(\widetilde{e_{s}})c(\widetilde{e_{t}})
+\frac{1}{4}\sum_{i\neq s\neq t}A_{ist}
  c(\widetilde{e_{i}})c(\widetilde{e_{s}})c(\widetilde{e_{t}})  \nonumber\\
  &+\frac{1}{4}\sum_{i, s, t}[-A_{iit}c(\widetilde{e_{t}})
  +A_{isi}c(\widetilde{e_{s}}) -A_{iss}c(\widetilde{e_{i}})+2A_{iii}c(\widetilde{e_{i}})], \\
\sigma_{0}(D_{T}^{*})=&-\frac{1}{4}\sum_{i,s,t}\omega_{s,t}(\widetilde{e_{i}})c(\widetilde{e_{s}})c(\widetilde{e_{t}})
 +\frac{1}{4}\sum_{i\neq s\neq t}A_{ist}
  c(\widetilde{e_{i}})c(\widetilde{e_{s}})c(\widetilde{e_{t}})  \nonumber\\
  &-\frac{1}{4}\sum_{i, s, t}[-A_{iit}c(\widetilde{e_{t}})
  +A_{isi}c(\widetilde{e_{s}}) -A_{iss}c(\widetilde{e_{i}})+2A_{iii}c(\widetilde{e_{i}})].
\end{align}
We define $\nabla_X^{S(TM)}:=X+\frac{1}{4}\sum_{ij}\langle\nabla_X^L{e_i},e_j\rangle c(e_i)c(e_j)$, which is a spin connection. Set
\begin{equation}
A(X)=\frac{1}{4}\Sigma_{ij}\langle\nabla_X^L{e_i},e_j\rangle c(e_i)c(e_j).
\end{equation}
 Let $\widetilde{\nabla}_{X}=X+A(X)+\overline{T}(X,\cdot,\cdot)$,
  $\overline{T}(X,\cdot,\cdot)=\frac{3}{2}\sum_{ 1\leq i<j\leq n}T(X,e_{i},e_{j})c(e_{i})c (e_{j})-\frac{1}{2}c(V)c(X)
  -\frac{1}{2}\langle V, X \rangle$ and
 $\widetilde{\nabla}_{Y}=Y+A(Y)+\overline{T}(Y,\cdot,\cdot)$, by (2.11), we obtain
\begin{align}
\widetilde{\nabla}_{X}\widetilde{\nabla}_{Y}&=(X+A(X)
+\overline{T}(X,\cdot,\cdot))(Y+A(Y)+\overline{T}(Y,\cdot,\cdot))\nonumber\\
 &=XY+X[A(Y)]+A(Y)X+A(X)Y+A(X)A(Y)+\overline{T}(X,\cdot)Y
 +\overline{T}(X,\cdot,\cdot)A(Y)\nonumber\\
 &~~~~+X[\overline{T}(Y,\cdot,\cdot)]+\overline{T}(Y,\cdot,\cdot)X+A(X)\overline{T}(X,\cdot,\cdot)+\overline{T}(Y,\cdot,\cdot)\overline{T}(Y,\cdot,\cdot),
\end{align}
where
$X=\Sigma_{j=1}^nX_j\partial_{x_j}, Y=\Sigma_{l=1}^nY_l\partial_{x_l}$.

 Let $g^{ij}=g(dx_{i},dx_{j})$, $\xi=\sum_{k}\xi_{j}dx_{j}$
  and $\nabla^L_{\partial_{i}}\partial_{j}=\sum_{k}\Gamma_{ij}^{k}\partial_{k}$,
   we get
\begin{align}
&\sigma_{i}=-\frac{1}{4}\sum_{s,t}\omega_{s,t}
(e_i)c(e_s)c(e_t)
;~~~\xi^{j}=g^{ij}\xi_{i};~~~~\Gamma^{k}=g^{ij}\Gamma_{ij}^{k};~~~~\sigma^{j}=g^{ij}\sigma_{i}.
\end{align}
Then we have the following lemmas.
\begin{lem}\label{lem3} The following identities hold:
\begin{align}
 \sigma_{0}(\widetilde{\nabla}_{X}\widetilde{\nabla}_{Y})=&X[A(Y)]+A(X)A(Y)
+\overline{T}(X,\cdot,\cdot)A(Y)\nonumber\\
&+X[\overline{T}(Y,\cdot,\cdot)]+A(X)\overline{T}(X,\cdot,\cdot)+\overline{T}(Y,\cdot,\cdot)\overline{T}(Y,\cdot,\cdot);\\
\sigma_{1}(\widetilde{\nabla}_{X}\widetilde{\nabla}_{Y})
=&\sqrt{-1}\sum_{j,l=1}^nX_j\frac{\partial Y_l }{\partial x_j }\sqrt{-1}\xi_l
+\sqrt{-1}\sum_jA(Y)X_j\xi_j+\sqrt{-1}\sum_lA(Y)Y_l\xi_l\nonumber\\
&+\sum_j  \overline{T}(X,\cdot)Y_j\sqrt{-1}\xi_j+\sum_j \overline{T}(Y,\cdot)X_j\sqrt{-1} \xi_j;\\
\sigma_{2}(\widetilde{\nabla}_{X}\widetilde{\nabla}_{Y})=&-\sum_{j,l=1}^nX_jY_l\xi_j\xi_l.
\end{align}
\end{lem}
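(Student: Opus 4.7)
The plan is to start from the explicit expansion of $\widetilde{\nabla}_X\widetilde{\nabla}_Y$ as the eleven-term sum displayed just above the lemma, classify each summand by its order as a differential operator on sections of $S(TM)$, and then read off the leading symbol in each order by the Leibniz rule for composing a vector field with a zero-order endomorphism. The key observation is that $A(X)$, $A(Y)$, $\overline{T}(X,\cdot,\cdot)$ and $\overline{T}(Y,\cdot,\cdot)$ act as pointwise Clifford endomorphisms on $S(TM)$ and are therefore zero-order, with total symbol equal to themselves, whereas $X=\sum_j X_j\partial_{x_j}$ and $Y=\sum_l Y_l\partial_{x_l}$ are first-order with principal symbols $\sqrt{-1}\sum_j X_j\xi_j$ and $\sqrt{-1}\sum_l Y_l\xi_l$.

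For the order-two part, only $XY$ contributes; applying the Leibniz identity $XY=\sum_{j,l}X_jY_l\partial_{x_j}\partial_{x_l}+\sum_{j,l}X_j(\partial_{x_j}Y_l)\partial_{x_l}$ isolates the leading piece, whose symbol is $-\sum_{j,l}X_jY_l\xi_j\xi_l$, giving $\sigma_2$. For the order-one part, contributions come from the residual Leibniz term $\sum_{j,l}X_j(\partial_{x_j}Y_l)\partial_{x_l}$ together with the four compositions $A(Y)X$, $A(X)Y$, $\overline{T}(X,\cdot,\cdot)Y$ and $\overline{T}(Y,\cdot,\cdot)X$, each a zero-order endomorphism composed with a first-order vector field whose symbol is simply the product of the endomorphism with $\sqrt{-1}\sum_j X_j\xi_j$ or $\sqrt{-1}\sum_l Y_l\xi_l$. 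Summing these five contributions reproduces the stated expression for $\sigma_1$.

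For the order-zero part, one collects the two ``absorbed-derivative'' terms $X[A(Y)]$ and $X[\overline{T}(Y,\cdot,\cdot)]$, which arise when the outer vector field $X$ differentiates a Clifford-valued coefficient, together with the four products of two zero-order operators $A(X)A(Y)$, $\overline{T}(X,\cdot,\cdot)A(Y)$, $A(X)\overline{T}(Y,\cdot,\cdot)$ and $\overline{T}(X,\cdot,\cdot)\overline{T}(Y,\cdot,\cdot)$; each equals its own symbol, and their sum yields $\sigma_0$. The only delicate point is bookkeeping: one must track, in each composition where $X$ stands on the outside, whether the $\partial_{x_j}$ acts on the spinor argument (producing a first-order operator) or on a coefficient (producing a zero-order multiplication), so that no Leibniz contribution is double-counted. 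Once that distinction is made term by term, the three graded pieces fall out by inspection with no further analytic input needed.
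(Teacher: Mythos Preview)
Your approach is correct and mirrors the paper's own reasoning: the paper simply expands $\widetilde{\nabla}_X\widetilde{\nabla}_Y=(X+A(X)+\overline{T}(X,\cdot,\cdot))(Y+A(Y)+\overline{T}(Y,\cdot,\cdot))$ into the eleven-term sum (4.10) and then reads off $\sigma_2$, $\sigma_1$, $\sigma_0$ by grading each summand as a differential operator, exactly as you do via the Leibniz rule. Your write-up is in fact more explicit than the paper's (which offers no separate proof beyond (4.10)), and the minor discrepancies between the terms you obtain and those listed in the statement (e.g.\ $A(X)\overline{T}(Y,\cdot,\cdot)$ versus $A(X)\overline{T}(X,\cdot,\cdot)$) stem from apparent typos in the paper that are carried over verbatim from (4.10) to the lemma.
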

Hence by Lemma 2.1 in \cite{Wa3}, we have
 \begin{lem}\label{le:31}
The symbol of the Dirac operator
\begin{align}
\sigma_{-1}(D_{T}^{-1})&=\sigma_{-1}((D_{T}^{*})^{-1})=\frac{\sqrt{-1}c(\xi)}{|\xi|^{2}}; \\
\sigma_{-2}(D_{T}^{-1})&=\frac{c(\xi)\sigma_{0}(D_{T})c(\xi)}{|\xi|^{4}}+\frac{c(\xi)}{|\xi|^{6}}\sum_{j}c(\texttt{d}x_{j})
\Big[\partial_{x_{j}}(c(\xi))|\xi|^{2}-c(\xi)\partial_{x_{j}}(|\xi|^{2})\Big]; \\
\sigma_{-2}((D_{T}^{*})^{-1})&=\frac{c(\xi)\sigma_{0}(D_{T}^{*})c(\xi)}{|\xi|^{4}}+\frac{c(\xi)}{|\xi|^{6}}\sum_{j}c(\texttt{d}x_{j})
\Big[\partial_{x_{j}}(c(\xi))|\xi|^{2}-c(\xi)\partial_{x_{j}}(|\xi|^{2})\Big].
\end{align}
\end{lem}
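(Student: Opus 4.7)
The plan is to invert the two first-order elliptic operators $D_T$ and $D_T^*$ symbolically, using the standard composition rule for classical pseudodifferential symbols,
\begin{equation*}
\sigma(P\circ Q) \sim \sum_{\alpha}\frac{(-i)^{|\alpha|}}{\alpha!}\,\partial_\xi^\alpha\sigma(P)\,\partial_x^\alpha\sigma(Q).
\end{equation*}
The key structural observation, read off from (4.6)--(4.8), is that $D_T$ and $D_T^*$ share the common principal symbol $\sigma_1 = \sqrt{-1}\,c(\xi)$; the torsion contributions enter only through the distinct subprincipal symbols $\sigma_0(D_T)$ and $\sigma_0(D_T^*)$. Since $\sigma_{-1}$ of either inverse is determined entirely by $\sigma_1$, this already forces $\sigma_{-1}(D_T^{-1}) = \sigma_{-1}((D_T^*)^{-1})$, reducing the computation to the form treated for the untwisted Dirac operator in Lemma~2.1 of \cite{Wa3}.

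To carry this out I would first apply $P\circ P^{-1} = \mathrm{Id}$ with $P = D_T$ and collect the order-zero part of the composition, which yields $\sigma_1(P)\sigma_{-1}(P^{-1}) = \mathrm{Id}$. Using the Clifford identity $c(\xi)^2 = -|\xi|^2$, one inverts $\sqrt{-1}\,c(\xi)$ directly to obtain the claimed $\sigma_{-1}(P^{-1}) = \sqrt{-1}\,c(\xi)/|\xi|^2$; repeating the step with $P = D_T^*$ produces the identical formula, confirming the first line of the lemma.

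For $\sigma_{-2}$ I would collect the order $-1$ contribution of $P\circ P^{-1} = \mathrm{Id}$:
\begin{equation*}
\sigma_1(P)\,\sigma_{-2}(P^{-1}) + \sigma_0(P)\,\sigma_{-1}(P^{-1}) - i\sum_{j}\partial_{\xi_j}\sigma_1(P)\,\partial_{x_j}\sigma_{-1}(P^{-1}) = 0.
\end{equation*}
Since $\partial_{\xi_j}\sigma_1(P) = \sqrt{-1}\,c(dx_j)$, I would solve this equation for $\sigma_{-2}(P^{-1})$ by left-multiplication with $\sqrt{-1}\,c(\xi)/|\xi|^2$. Differentiating $\sigma_{-1}(P^{-1}) = \sqrt{-1}\,c(\xi)/|\xi|^2$ in $x_j$ generates exactly the two terms $\partial_{x_j}(c(\xi))\,|\xi|^2$ and $c(\xi)\,\partial_{x_j}(|\xi|^2)$ appearing in the stated formula, and combining everything reproduces the required expression for $\sigma_{-2}(D_T^{-1})$. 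The computation for $(D_T^*)^{-1}$ is identical after replacing $\sigma_0(D_T)$ by $\sigma_0(D_T^*)$.

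The entire argument is essentially symbol bookkeeping once the principal-symbol coincidence with the untorted Dirac operator is noted; the only point demanding vigilance is preserving the left/right order of the matrix-valued factors, since $c(\xi)$, $c(dx_j)$ and $\sigma_0(D_T)$ do not commute, and tracking the various factors of $\sqrt{-1}$ that arise from $\sigma_1$ and $\partial_{\xi_j}\sigma_1$. No genuinely new analytic input beyond the cited lemma of \cite{Wa3} is required.
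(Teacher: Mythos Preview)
Your proposal is correct and follows exactly the approach the paper relies on: the paper simply cites Lemma~2.1 of \cite{Wa3} without further argument, and what you have written is precisely the standard recursive symbol-inversion that underlies that lemma, applied with the torsion-modified subprincipal symbols $\sigma_0(D_T)$ and $\sigma_0(D_T^*)$. Your observation that only the principal symbol governs $\sigma_{-1}$, and that the torsion appears solely through $\sigma_0$ in the $\sigma_{-2}$ formula, is exactly the point that makes the citation of \cite{Wa3} legitimate here.
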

Let $u=\frac{1}{4}\sum_{i\neq s\neq t}A_{ist}
  c(\widetilde{e_{i}})c(\widetilde{e_{s}})c(\widetilde{e_{t}}) $,
  $v=\frac{1}{4}\sum_{i, s, t}[-A_{iit}c(\widetilde{e_{t}})
  +A_{isi}c(\widetilde{e_{s}}) -A_{iss}c(\widetilde{e_{i}})+2A_{iii}c(\widetilde{e_{i}})]$, we get
\begin{lem} The following identities hold:
\begin{align}
\sigma_{-2}((D_{T}^{*}D_{T})^{-1})&=|\xi|^{2}=\sigma_{-2}(D^{-2})=|\xi|^{-2};\\
\sigma_{-3}((D_{T}^{*}D_{T})^{-1})&=-\sqrt{-1}|\xi|^{-4}\xi_k(\Gamma^k-2\delta^k)
-\sqrt{-1}|\xi|^{-6}2\xi^j\xi_\alpha\xi_\beta\partial_jg^{\alpha\beta}\nonumber\\
&-\big((u-v)\sqrt{-1}c(\xi)+\sqrt{-1}c(\xi)(u+v)  \big)|\xi|^{-4}.
\end{align}
\end{lem}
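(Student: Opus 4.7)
The plan is to obtain the expansion of $\sigma((D_T^*D_T)^{-1})$ by first computing the symbols of $D_T^*D_T$ to the required order from the symbol formulas (4.6)--(4.8) for $D_T$ and $D_T^*$, and then running the standard parametrix recursion one step to invert at orders $-2$ and $-3$.

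First I would compute $\sigma_j(D_T^*D_T)$ for $j=2,1$ via the composition formula
$$\sigma(AB)=\sum_{\alpha}\frac{(-\sqrt{-1})^{|\alpha|}}{\alpha!}\,\partial_\xi^\alpha\sigma(A)\,\partial_x^\alpha\sigma(B).$$
Since $\sigma_1(D_T)=\sigma_1(D_T^*)=\sqrt{-1}\,c(\xi)$ agrees with the Levi-Civita Dirac principal symbol, we immediately get $\sigma_2(D_T^*D_T)=|\xi|^2$, identical to the torsion-free case. From (4.7)--(4.8) one can decompose $\sigma_0(D_T)=\sigma_0(D)+u+v$ and $\sigma_0(D_T^*)=\sigma_0(D)+u-v$; plugging these into the composition formula, $\sigma_1(D_T^*D_T)$ splits as the classical piece $\sigma_1(D^2)$ plus the torsion correction $\sqrt{-1}\,(u-v)\,c(\xi)+\sqrt{-1}\,c(\xi)\,(u+v)$. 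The asymmetric placement of $u\pm v$ here is forced by the order of composition and is precisely the source of the asymmetry in the final formula.

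Next I would invert. Writing $p=\sigma(D_T^*D_T)$ and $q=\sigma((D_T^*D_T)^{-1})=q_{-2}+q_{-3}+\cdots$, the identity $p\,\#\,q=1$ at homogeneity $0$ gives $q_{-2}=|\xi|^{-2}$, which is the first claim. At homogeneity $-1$ it gives
$$q_{-3}=-|\xi|^{-2}\Big(\sigma_1(D_T^*D_T)\,|\xi|^{-2}+(-\sqrt{-1})\sum_j\partial_{\xi_j}|\xi|^2\,\partial_{x_j}|\xi|^{-2}\Big).$$
Using $\partial_{\xi_j}|\xi|^2=2\xi^j$ and $\partial_{x_j}|\xi|^{-2}=-|\xi|^{-4}\xi_\alpha\xi_\beta\partial_jg^{\alpha\beta}$, and recognising that the Levi-Civita contribution to $|\xi|^{-2}\sigma_1(D_T^*D_T)|\xi|^{-2}$ collapses into the standard combination $-\sqrt{-1}|\xi|^{-4}\xi_k(\Gamma^k-2\delta^k)$ exactly as in Wang's computation of $\sigma_{-3}(D^{-2})$ in \cite{Wa3}, one recovers the two classical terms of the stated formula. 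The remaining torsion correction $-\big[(u-v)\sqrt{-1}\,c(\xi)+\sqrt{-1}\,c(\xi)(u+v)\big]|\xi|^{-4}$ is just the image under the parametrix recursion of the torsion piece of $\sigma_1(D_T^*D_T)$ identified above.

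The main obstacle is careful bookkeeping, specifically (i) tracking the $(-\sqrt{-1})\partial_{\xi_j}\partial_{x_j}$ correction in composition, which is what generates the $\Gamma^k$ and $\partial_j g^{\alpha\beta}$ contributions from the otherwise trivial principal symbol $|\xi|^2$, and (ii) keeping the order of $c(\xi)$ relative to $u$ and $v$ straight, since the Clifford elements do not commute and the sign flip of $v$ between $\sigma_0(D_T)$ and $\sigma_0(D_T^*)$ is precisely what produces the asymmetric combination $(u-v)c(\xi)+c(\xi)(u+v)$ appearing in the claim. Once these are handled, the proof amounts to adding a torsion-dependent correction on top of the classical $\sigma_{-3}(D^{-2})$ formula already available in \cite{Wa3}.
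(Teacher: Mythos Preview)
Your proposal is correct and follows the same strategy the paper (implicitly) uses: the paper states Lemma 4.4 without a detailed proof, simply recording the definitions of $u$ and $v$ and then presenting the formulas, relying on the classical $\sigma_{-3}(D^{-2})$ computation from \cite{Wa3} for the Levi-Civita part and adding the torsion correction coming from $\sigma_0(D_T)=\sigma_0(D)+u+v$, $\sigma_0(D_T^*)=\sigma_0(D)+u-v$. Your parametrix recursion with the split $\sigma_1(D_T^*D_T)=\sigma_1(D^2)+\sqrt{-1}(u-v)c(\xi)+\sqrt{-1}c(\xi)(u+v)$ is exactly this argument made explicit.
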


 By  Lemma 4.2, Lemma 4.4 and $\sigma(p_{1}\circ p_{2})=\sum_{\alpha}\frac{1}{\alpha!}\partial^{\alpha}_{\xi}[\sigma(p_{1})]
 D_x^{\alpha}[\sigma(p_{2})]$, then
\begin{lem} The following identities hold:
\begin{align}
\sigma_{0}(\widetilde{\nabla}_{X}\widetilde{\nabla}_{Y}(D_{T}^{*}D_{T})^{-1})=&
-\sum_{j,l=1}^nX_jY_l\xi_j\xi_l|\xi|^{-2};\\
\sigma_{-1}(\widetilde{\nabla}_{X}\widetilde{\nabla}_{Y}(D_{T}^{*}D_{T})^{-1})=&
\sigma_{2}(\widetilde{\nabla}_{X}\widetilde{\nabla}_{Y})\sigma_{-3}((D_{T}^{*}D_{T})^{-1})
+\sigma_{1}(\widetilde{\nabla}_{X}\widetilde{\nabla}_{Y})\sigma_{-2}((D_{T}^{*}D_{T})^{-1})\nonumber\\
&+\sum_{j=1}^{n}\partial_{\xi_{j}}\big[\sigma_{2}(\widetilde{\nabla}_{X}\widetilde{\nabla}_{Y})\big]
D_{x_{j}}\big[\sigma_{-2}((D_{T}^{*}D_{T})^{-1})\big].
\end{align}
\end{lem}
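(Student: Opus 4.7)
The plan is to prove both identities as direct consequences of the standard symbol composition formula
\[
\sigma(p_{1}\circ p_{2})=\sum_{\alpha}\frac{1}{\alpha!}\partial^{\alpha}_{\xi}[\sigma(p_{1})]\,D_{x}^{\alpha}[\sigma(p_{2})],
\]
applied to $p_{1}=\widetilde{\nabla}_{X}\widetilde{\nabla}_{Y}$ and $p_{2}=(D_{T}^{*}D_{T})^{-1}$. By Lemma 4.3, $p_{1}$ is a differential operator of order $2$ whose only nonzero homogeneous components are $\sigma_{2}(p_{1})$, $\sigma_{1}(p_{1})$, $\sigma_{0}(p_{1})$; by Lemma 4.4, $p_{2}$ is a classical pseudodifferential operator of order $-2$ with homogeneous components $\sigma_{\ell}(p_{2})$ only for $\ell\leq -2$. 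So the $k$-th homogeneous component of the composition collects all triples $(r,\ell,|\alpha|)$ with $r+\ell-|\alpha|=k$, $r\leq 2$ and $\ell\leq -2$.

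For the first identity, I would observe that the only admissible triple with $r+\ell-|\alpha|=0$ is $(r,\ell,|\alpha|)=(2,-2,0)$: any $|\alpha|\geq 1$ would force $r+\ell\geq 1$ which is incompatible with $r\leq 2$, $\ell\leq -2$. Hence
\[
\sigma_{0}\bigl(\widetilde{\nabla}_{X}\widetilde{\nabla}_{Y}(D_{T}^{*}D_{T})^{-1}\bigr)
=\sigma_{2}(p_{1})\,\sigma_{-2}(p_{2}),
\]
and substitution of $\sigma_{2}(p_{1})=-\sum_{j,l}X_{j}Y_{l}\xi_{j}\xi_{l}$ from Lemma 4.3 together with $\sigma_{-2}(p_{2})=|\xi|^{-2}$ from Lemma 4.4 yields the stated expression.

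For the second identity, I would enumerate the triples with $r+\ell-|\alpha|=-1$ subject to $r\leq 2$, $\ell\leq -2$: these are exactly $(2,-3,0)$, $(1,-2,0)$, and $(2,-2,1)$. The case $(0,-1,0)$ is ruled out because $\sigma_{-1}(p_{2})$ does not exist, and any triple with $|\alpha|\geq 2$ would require $r+\ell\geq 1$, again impossible. Taking $\frac{1}{\alpha!}=1$ in each case (for $|\alpha|=0$ or $1$ this factor is $1$), these three contributions sum precisely to the right-hand side of the claimed formula.

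There is no genuine obstacle here beyond bookkeeping: the lemma is a direct instance of the composition calculus once one confirms that no other $(r,\ell,|\alpha|)$ triples contribute, which follows from the order restrictions on $p_{1}$ and $p_{2}$ noted above. The explicit expressions for $\sigma_{-3}((D_{T}^{*}D_{T})^{-1})$, $\sigma_{1}(\widetilde{\nabla}_{X}\widetilde{\nabla}_{Y})$, and the derivative terms are already recorded in Lemmas 4.3 and 4.4, so the identity can be stated in the compact form above without expansion, which is exactly what is needed for the subsequent boundary integral computation of $\int_{\partial M}\Phi$.
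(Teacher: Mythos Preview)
Your proposal is correct and follows exactly the paper's approach: the paper states this lemma as an immediate consequence of the composition formula $\sigma(p_1\circ p_2)=\sum_\alpha\frac{1}{\alpha!}\partial_\xi^\alpha[\sigma(p_1)]D_x^\alpha[\sigma(p_2)]$ together with the previously computed symbol components of $\widetilde{\nabla}_X\widetilde{\nabla}_Y$ and $(D_T^*D_T)^{-1}$. Your enumeration of the contributing triples $(r,\ell,|\alpha|)$ is accurate and in fact spells out more detail than the paper gives; just note that in the paper's numbering the symbols of $\widetilde{\nabla}_X\widetilde{\nabla}_Y$ appear in Lemma~4.2 rather than Lemma~4.3.
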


Since $\Phi$ is a global form on $\partial M$, so for any fixed point $x_{0}\in\partial M$, we can choose the normal coordinates
$U$ of $x_{0}$ in $\partial M$(not in $M$) and compute $\Phi(x_{0})$ in the coordinates $\widetilde{U}=U\times [0,1)$ and the metric
$\frac{1}{h(x_{n})}g^{\partial M}+\texttt{d}x _{n}^{2}$. The dual metric of $g^{\partial M}$ on $\widetilde{U}$ is
$\frac{1}{\tilde{h}(x_{n})}g^{\partial M}+\texttt{d}x _{n}^{2}.$ Write
$g_{ij}^{M}=g^{M}(\frac{\partial}{\partial x_{i}},\frac{\partial}{\partial x_{j}})$;
$g^{ij}_{M}=g^{M}(d x_{i},dx_{j})$, then

\begin{equation}
[g_{i,j}^{M}]=
\begin{bmatrix}\frac{1}{h( x_{n})}[g_{i,j}^{\partial M}]&0\\0&1\end{bmatrix};\quad
[g^{i,j}_{M}]=\begin{bmatrix} h( x_{n})[g^{i,j}_{\partial M}]&0\\0&1\end{bmatrix},
\end{equation}
and
\begin{equation}
\partial_{x_{s}} g_{ij}^{\partial M}(x_{0})=0,\quad 1\leq i,j\leq n-1;\quad g_{i,j}^{M}(x_{0})=\delta_{ij}.
\end{equation}

Let $\{e_{1},\cdots, e_{n-1}\}$ be an orthonormal frame field in $U$ about $g^{\partial M}$ which is parallel along geodesics and
$e_{i}=\frac{\partial}{\partial x_{i}}(x_{0})$, then $\{\widetilde{e_{1}}=\sqrt{h(x_{n})}e_{1}, \cdots,
\widetilde{e_{n-1}}=\sqrt{h(x_{n})}e_{n-1},\widetilde{e_{n}}=dx_{n}\}$ is the orthonormal frame field in $\widetilde{U}$ about $g^{M}.$
Locally $S(TM)|\widetilde{U}\cong \widetilde{U}\times\wedge^{*}_{C}(\frac{n}{2}).$ Let $\{f_{1},\cdots,f_{n}\}$ be the orthonormal basis of
$\wedge^{*}_{C}(\frac{n}{2})$. Take a spin frame field $\sigma: \widetilde{U}\rightarrow Spin(M)$ such that
$\pi\sigma=\{\widetilde{e_{1}},\cdots, \widetilde{e_{n}}\}$ where $\pi: Spin(M)\rightarrow O(M)$ is a double covering, then
$\{[\sigma, f_{i}], 1\leq i\leq 4\}$ is an orthonormal frame of $S(TM)|_{\widetilde{U}}.$ In the following, since the global form $\Phi$
is independent of the choice of the local frame, so we can compute $\texttt{tr}_{S(TM)}$ in the frame $\{[\sigma, f_{i}], 1\leq i\leq 4\}$.
Let $\{E_{1},\cdots,E_{n}\}$ be the canonical basis of $R^{n}$ and
$c(E_{i})\in cl_{C}(n)\cong Hom(\wedge^{*}_{C}(\frac{n}{2}),\wedge^{*}_{C}(\frac{n}{2}))$ be the Clifford action. By \cite{Y}, then

\begin{equation}
c(\widetilde{e_{i}})=[(\sigma,c(E_{i}))]; \quad c(\widetilde{e_{i}})[(\sigma, f_{i})]=[\sigma,(c(E_{i}))f_{i}]; \quad
\frac{\partial}{\partial x_{i}}=[(\sigma,\frac{\partial}{\partial x_{i}})],
\end{equation}
then we have $\frac{\partial}{\partial x_{i}}c(\widetilde{e_{i}})=0$ in the above frame. By Lemma 2.2 in \cite{Wa3}, we have

\begin{lem}\label{le:32}
With the metric $\frac{1}{h(x_{n})}g^{\partial M}+\texttt{d}x _{n}^{2}$ on $M$ near the boundary
\begin{eqnarray}
&&  \partial x_{j}(|\xi|^{2}_{g^{M}})(x_{0})=0,
     \quad if~~ j<n ;\quad  =h'(0)|\xi'|^{2}_{g^{\partial M}},\quad  if~~ j=n.  \nonumber\\
&&  \partial x_{j}(c(\xi))(x_{0})=0,
     \quad if~~ j<n ;\quad =\partial x_{n}(c(\xi'))(x_{0}),\quad if~~ j=n.
\end{eqnarray}
where $\xi=\xi'+\xi_{n}\texttt{d}x_{n}$
\end{lem}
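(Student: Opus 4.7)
The lemma is a purely local computation at a boundary point $x_0 \in \partial M$. The approach is to exploit two reductions already in place: the product structure $g^M = \tfrac{1}{h(x_n)} g^{\partial M} + dx_n^2$ in the collar, and normal coordinates on $\partial M$ centered at $x_0$, which give $\partial_{x_s} g_{ij}^{\partial M}(x_0) = 0$ for $1 \le s,i,j \le n-1$. Together with $g^M_{ij}(x_0) = \delta_{ij}$, these reduce the four identities to direct coordinate computations.

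For the $|\xi|^2_{g^M}$ statements, I would first invert the block-diagonal metric matrix: since $[g^M_{ij}] = \mathrm{diag}\!\bigl(\tfrac{1}{h(x_n)}[g^{\partial M}_{ij}],\, 1\bigr)$, the inverse is $[g_M^{ij}] = \mathrm{diag}\bigl(h(x_n)[g_{\partial M}^{ij}],\, 1\bigr)$, so
\begin{equation*}
|\xi|^2_{g^M}(x) = h(x_n)\sum_{i,k<n} g_{\partial M}^{ik}(x')\,\xi_i \xi_k + \xi_n^2.
\end{equation*}
Differentiating by $x_j$ with $j<n$ gives only $h(x_n)\,\partial_{x_j} g_{\partial M}^{ik}(x')\,\xi_i\xi_k$, which vanishes at $x_0$ by the $\partial M$-normal coordinate condition. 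Differentiating by $x_n$ picks up $h'(x_n)\sum g_{\partial M}^{ik}(x')\xi_i\xi_k$ (the tangential factor is independent of $x_n$ because $g^{\partial M}$ lives on $\partial M$), which evaluates at $x_0$ to $h'(0)|\xi'|^2_{g^{\partial M}}$.

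For the Clifford action, I would split $c(\xi) = c(\xi') + \xi_n c(dx_n)$. Since $g^M(dx_n,dx_n) = 1$ and $g^M(dx_n,dx_j) = 0$ for $j<n$ throughout the collar, the Clifford relation $c(dx_i)c(dx_j)+c(dx_j)c(dx_i) = -2g^{ij}$ forces $c(dx_n)$ to be representable by a constant matrix in the fixed basis $\{f_i\}$ of the spin fiber, giving $\partial_{x_j}c(dx_n) = 0$ for every $j$. For $c(dx_k)$ with $k<n$, write $c(dx_k) = \sum_l a^k_l(x)\, c(\tilde e_l^*)$ in the orthonormal coframe dual to $\{\tilde e_l\}$; in the spin frame $\sigma$ the $c(\tilde e_l^*)$ are constant matrices, so the $x$-dependence sits entirely in $a^k_l(x)$. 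These transition coefficients are products of $\sqrt{h(x_n)}$ and the transition between $\{e_l\}$ and $\{\partial_{x_l}\}$, the latter of which has tangential derivatives vanishing at $x_0$ by the normal coordinate condition on $\partial M$. Hence $\partial_{x_j}c(\xi')(x_0)=0$ for $j<n$, so $\partial_{x_j}c(\xi)(x_0) = 0$. For $j=n$, constancy of $c(dx_n)$ gives $\partial_{x_n}c(\xi)(x_0) = \partial_{x_n}c(\xi')(x_0)$, the final claim.

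\textbf{Main obstacle.} The delicate step is the Clifford part: one must be careful that ``constant in the frame $\{f_i\}$'' is well-defined despite the fact that $\tilde e_l = \sqrt{h(x_n)}\,e_l$ stretches with $x_n$, and then verify that only the $x_n$-direction contributes nontrivially at $x_0$. The cleanest organization is to differentiate the Clifford relation itself: $\partial_{x_j}c(dx_i)\cdot c(dx_k) + c(dx_i)\cdot\partial_{x_j}c(dx_k) + (\text{symmetric}) = -2\,\partial_{x_j} g^{ik}$, and observe that for $j<n$ the right-hand side vanishes at $x_0$ for all $i,k$ (using both the $\partial M$-normal coordinate identity and the fact that $g^{nn} \equiv 1$, $g^{jn} \equiv 0$), forcing $\partial_{x_j}c(dx_i)(x_0) = 0$ in the chosen frame.
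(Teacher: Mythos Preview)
Your argument is correct. Note, however, that the paper does not actually prove this lemma: it is quoted as Lemma~2.2 of \cite{Wa3}, with only the citation given and no argument reproduced here. So there is nothing in the present paper to compare your proof against. Your approach---inverting the block-diagonal metric to compute $|\xi|^2_{g^M}$ directly, and expressing $c(dx_k)$ in the spin frame $\{[\sigma,f_i]\}$ where each $c(\tilde e_l)$ is the constant matrix $c(E_l)$ so that all $x$-dependence is pushed into the coframe transition coefficients $a^k_l = \sqrt{h(x_n)}\,(b^{-1})^k_l(x')$---is the standard route and matches what the original reference does. One small remark: your final ``alternative'' via differentiating the Clifford relation $c(dx_i)c(dx_k)+c(dx_k)c(dx_i) = -2g^{ik}$ only shows that the \emph{symmetric} combination $\partial_{x_j}c(dx_i)\cdot c(dx_k) + c(dx_k)\cdot\partial_{x_j}c(dx_i)$ (summed over $i\leftrightarrow k$) vanishes at $x_0$, which by itself does not force $\partial_{x_j}c(dx_i)(x_0)=0$; you still need the frame argument you gave first, so that paragraph is best viewed as a consistency check rather than an independent proof.
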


Now we  need to compute $\int_{\partial M} \Phi$. When $n=4$, then ${\rm tr}_{S(TM)}[{\rm \texttt{id}}]={\rm dim}(\wedge^*(\mathbb{R}^2))=4$, the sum is taken over $
r+l-k-j-|\alpha|=-3,~~r\leq 0,~~l\leq-2,$ then we have the following five cases:

 {\bf case a)~I)}~$r=0,~l=-2,~k=j=0,~|\alpha|=1$.

 By (3.10), we get
\begin{equation}
\label{b24}
\Phi_1=-\int_{|\xi'|=1}\int^{+\infty}_{-\infty}\sum_{|\alpha|=1}
\mathrm{Tr}[\partial^\alpha_{\xi'}\pi^+_{\xi_n}\sigma_{0}(\widetilde{\nabla}_{X}\widetilde{\nabla}_{Y}(D_{T}^{*}D_{T})^{-1})\times
 \partial^\alpha_{x'}\partial_{\xi_n}\sigma_{-2}((D_{T}^{*}D_{T})^{-1})](x_0)d\xi_n\sigma(\xi')dx'.
\end{equation}
By Lemma 2.2 in \cite{Wa3}, for $i<n$, then
\begin{equation}
\label{b25}
\partial_{x_i}\sigma_{-2}((D_{T}^{*}D_{T})^{-1})(x_0)=
\partial_{x_i}(|\xi|^{-2})(x_0)=
-\frac{\partial_{x_i}(|\xi|^{2})(x_0)}{|\xi|^4}=0,
\end{equation}
 so $\Phi_1=0$.

 {\bf case a)~II)}~$r=0,~l=-2,~k=|\alpha|=0,~j=1$.

 By (3.10), we get
\begin{equation}
\label{b26}
\Phi_2=-\frac{1}{2}\int_{|\xi'|=1}\int^{+\infty}_{-\infty}
\mathrm{Tr}[\partial_{x_n}\pi^+_{\xi_n}\sigma_{0}(\widetilde{\nabla}_{X}\widetilde{\nabla}_{Y}(D_{T}^{*}D_{T})^{-1})\times
\partial_{\xi_n}^2\sigma_{-2}((D_{T}^{*}D_{T})^{-1}))](x_0)d\xi_n\sigma(\xi')dx'.
\end{equation}
By Lemma 4.4, we have
\begin{eqnarray}\label{b237}
\partial_{\xi_n}^2\sigma_{-2}((D_{T}^{*}D_{T})^{-1}))(x_0)=\partial_{\xi_n}^2(|\xi|^{-2})(x_0)=\frac{6\xi_n^2-2}{(1+\xi_n^2)^3}.
\end{eqnarray}
It follows that
\begin{eqnarray}\label{b27}
\partial_{x_n}\sigma_{0}(\widetilde{\nabla}_{X}\widetilde{\nabla}_{Y}(D_{T}^{*}D_{T})^{-1})(x_0)
=\partial_{x_n}(-\sum_{j,l=1}^nX_jY_l\xi_j\xi_l|\xi|^{-2})=\frac{\sum_{j,l=1}^nX_jY_l\xi_j\xi_lh'(0)|\xi'|^2}{(1+\xi_n^2)^2}.
\end{eqnarray}
By integrating formula, we obtain
\begin{align}\label{b28}
\pi^+_{\xi_n}\partial_{x_n}\sigma_{0}(\widetilde{\nabla}_{X}\widetilde{\nabla}_{Y}(D_{T}^{*}D_{T})^{-1})(x_0)
&=\partial_{x_n}\pi^+_{\xi_n}\sigma_{0}(\widetilde{\nabla}_{X}\widetilde{\nabla}_{Y}(D_{T}^{*}D_{T})^{-1})\nonumber\\
&=-\frac{i\xi_n}{4(\xi_n-i)^2}\sum_{j,l=1}^{n-1}X_jY_l\xi_j\xi_lh'(0)+\frac{2-i\xi_n}{4(\xi_n-i)^2}X_nY_nh'(0)\nonumber\\
&-\frac{i}{4(\xi_n-i)^2}\sum_{j=1}^{n-1}X_jY_n\xi_j-\frac{i}{4(\xi_n-i)^2}\Sigma_{l=1}^{n-1}X_nY_l\xi_l.
\end{align}
We note that $i<n,~\int_{|\xi'|=1}\xi_{i_{1}}\xi_{i_{2}}\cdots\xi_{i_{2d+1}}\sigma(\xi')=0$,
so we omit some items that have no contribution for computing {\bf case a)~II)}.
From (4.29) and (4.31), we obtain
\begin{align}\label{33}
&\mathrm{Tr} [\partial_{x_n}\pi^+_{\xi_n}\sigma_{0}(\widetilde{\nabla}_{X}\widetilde{\nabla}_{Y}(D_{T}^{*}D_{T})^{-1})\times
\partial_{\xi_n}^2\sigma_{-2}((D_{T}^{*}D_{T})^{-1})](x_0)\nonumber\\
&=2\frac{1+\xi_ni-3\xi_n^3i-i}{(\xi_n-i)^5(\xi_n+i)^3}\Sigma_{j,l=1}^{n-1}X_jY_l\xi_j\xi_lh'(0)
+2\frac{1+\xi_ni-3\xi_n^3i-i}{(\xi_n-i)^5(\xi_n+i)^3}X_nY_nh'(0)\nonumber\\
&+2\frac{(1-3\xi_n^2)i}{(\xi_n-i)^5(\xi_n+i)^3}\Sigma_{j=1}^{n-1}X_jY_n\xi_j
+2\frac{(1-3\xi_n^2)i}{(\xi_n-i)^5(\xi_n+i)^3}\Sigma_{l=1}^{n-1}X_nY_l\xi_l.
\end{align}
Therefore, we get
\begin{align}\label{35}
\Phi_2=&-\frac{1}{2}\int_{|\xi'|=1}\int^{+\infty}_{-\infty}\bigg\{2\frac{1+\xi_ni-3\xi_n^3i-i}{(\xi_n-i)^5(\xi_n+i)^3}
\sum_{j,l=1}^{n-1}X_jY_l\xi_j\xi_lh'(0)+2\frac{1+\xi_ni-3\xi_n^3i-i}{(\xi_n-i)^5(\xi_n+i)^3}X_nY_nh'(0)\bigg\}d\xi_n\sigma(\xi')dx'\nonumber\\
=&-\sum_{j,l=1}^{n-1}X_jY_lh'(0)\int_{|\xi'|=1}\int_{\Gamma^{+}}\frac{1+\xi_ni-3\xi_n^3i-i}{(\xi_n-i)^5(\xi_n+i)^3}\xi_j\xi_ld\xi_{n}\sigma(\xi')dx'\nonumber\\
&-X_nY_nh'(0)\Omega_3\int_{\Gamma^{+}}\frac{1+\xi_ni-3\xi_n^3i-i}{(\xi_n-i)^5(\xi_n+i)^3}d\xi_{n}dx'\nonumber\\
=&-\sum_{j,l=1}^{n-1}X_jY_lh'(0)\frac{4\pi}{3}\frac{2\pi i}{4!}\left[\frac{1+\xi_ni-3\xi_n^3i-i}{(\xi_n+i)^3}\right]^{(4)}\bigg|_{\xi_n=i}dx'\nonumber\\
&-X_nY_nh'(0)\Omega_3\frac{2\pi i}{4!}\left[\frac{1+\xi_ni-3\xi_n^3i-i}{(\xi_n+i)^3}\right]^{(4)}\bigg|_{\xi_n=i}dx'\nonumber\\
=& \frac{13\pi^{2}}{24}\sum_{j=1}^{n-1}X_jY_jh'(0)dx'+\frac{13}{32}X_nY_n h'(0)\pi\Omega_3dx',
\end{align}
where ${\rm \Omega_{3}}$ is the canonical volume of $S^{2}.$

  {\bf case a)~III)}~$r=0,~l=-2,~j=|\alpha|=0,~k=1$.

By (3.10), we get
\begin{align}\label{36}
\Phi_3&=-\frac{1}{2}\int_{|\xi'|=1}\int^{+\infty}_{-\infty}
\mathrm{Tr} [\partial_{\xi_n}\pi^+_{\xi_n}\sigma_{0}(\widetilde{\nabla}_{X}\widetilde{\nabla}_{Y}(D_{T}^{*}D_{T})^{-1})\times
\partial_{\xi_n}\partial_{x_n}\sigma_{-2}(D_{T}^{*}D_{T})^{-1})](x_0)d\xi_n\sigma(\xi')dx'\nonumber\\
&=\frac{1}{2}\int_{|\xi'|=1}\int^{+\infty}_{-\infty}
\mathrm{Tr}[\partial_{\xi_n}^2\pi^+_{\xi_n}\sigma_{0}(\widetilde{\nabla}_{X}\widetilde{\nabla}_{Y}(D_{T}^{*}D_{T})^{-1}
)\times
\partial_{x_n}\sigma_{-2}(D_{T}^{*}D_{T})^{-1})](x_0)d\xi_n\sigma(\xi')dx'.
\end{align}
 By Lemma 4.4, we have
\begin{eqnarray}\label{37}
\partial_{x_n}\sigma_{-2}(D_{T}^{*}D_{T})^{-1})(x_0)|_{|\xi'|=1}
=-\frac{h'(0)}{(1+\xi_n^2)^2}.
\end{eqnarray}
An easy calculation gives
\begin{align}\label{38}
\pi^+_{\xi_n}\sigma_{0}(\widetilde{\nabla}_{X}\widetilde{\nabla}_{Y}(D_{T}^{*}D_{T})^{-1})(x_0)|_{|\xi'|=1}&
=\frac{i}{2(\xi_n-i)}\sum_{j,l=1}^{n-1}X_jY_l\xi_j\xi_l-\frac{1}{2(\xi_n-i)}X_nY_n\nonumber\\
&-\frac{1}{2(\xi_n-i)}\sum_{j=1}^{n-1}X_jY_n\xi_j-\frac{1}{2(\xi_n-i)}\sum_{l=1}^{n-1}X_nY_l\xi_l.
\end{align}
Also, straightforward computations yield
\begin{align}\label{mmmmm}
\partial_{\xi_n}^2\pi^+_{\xi_n}\sigma_{0}(\widetilde{\nabla}_{X}\widetilde{\nabla}_{Y}(D_{T}^{*}D_{T})^{-1})(x_0)|_{|\xi'|=1}
=\frac{i}{(\xi_n-i)^3}\sum_{j,l=1}^{n-1}X_jY_l\xi_j\xi_l-\frac{1}{(\xi_n-i)^3}X_nY_n.
\end{align}
From (4.35) and (4.37), we obtain
\begin{align}\label{39}
&\mathrm{Tr}[\partial_{\xi_n}\pi^+_{\xi_n}\sigma_{0}(\widetilde{\nabla}_{X}\widetilde{\nabla}_{Y}(D_{T}^{*}D_{T})^{-1})\times
\partial_{\xi_n}\partial_{x_n}\sigma_{-2}(D_{T}^{*}D_{T})^{-1})](x_0)\nonumber\\
&=-4\frac{h'(0)i}{(\xi_n-i)^5(\xi_n+i)^2}\sum_{j,l=1}^{n-1}X_jY_l\xi_j\xi_l+4\frac{h'(0)}{(\xi_n-i)^5(\xi_n+i)^2}X_nY_n.
\end{align}
Therefore, we get
\begin{align}\label{41}
\Phi_3&=\frac{1}{2}\int_{|\xi'|=1}\int^{+\infty}_{-\infty}
\bigg(-4\frac{h'(0)i}{(\xi_n-i)^5(\xi_n+i)^2}
\sum_{j,l=1}^{n-1}X_jY_l\xi_j\xi_l+4\frac{h'(0)}{(\xi_n-i)^5(\xi_n+i)^2}X_nY_n\bigg)d\xi_n\sigma(\xi')dx'\nonumber\\
&=-2\sum_{j,l=1}^{n-1}X_jY_lh'(0)\Omega_3\int_{\Gamma^{+}}\frac{i}{(\xi_n-i)^5(\xi_n+i)^2}\xi_j\xi_ld\xi_{n}dx'+2X_nY_nh'(0)\Omega_3\int_{\Gamma^{+}}\frac{1}{(\xi_n-i)^5(\xi_n+i)^2}d\xi_{n}dx'\nonumber\\
&=-2\sum_{j,l=1}^{n-1}X_jY_lh'(0)\Omega_3\frac{2\pi i}{4!}\left[\frac{i}{(\xi_n+i)^2}\right]^{(4)}
\bigg|_{\xi_n=i}dx'+2X_nY_nh'(0)\Omega_3\frac{2\pi i}{4!}\left[\frac{1}{(\xi_n+i)^2}\right]^{(4)}\bigg|_{\xi_n=i}dx'\nonumber\\
&= \frac{5\pi^{2}}{12}\sum_{j=1}^{n-1}X_jY_jh'(0)dx'+\frac{5i}{16}X_nY_n h'(0)\pi\Omega_3dx'.
\end{align}

 {\bf case b)}~$r=0,~l=-3,~k=j=|\alpha|=0$.

 By (3.10), we get
\begin{align}\label{42}
\Phi_4&=-i\int_{|\xi'|=1}\int^{+\infty}_{-\infty}\mathrm{Tr} [\pi^+_{\xi_n}
\sigma_{0}(\widetilde{\nabla}_{X}\widetilde{\nabla}_{Y}(D_{T}^{*}D_{T})^{-1})\times
\partial_{\xi_n}\sigma_{-3}(D_{T}^{*}D_{T})^{-1})](x_0)d\xi_n\sigma(\xi')dx'\nonumber\\
&=i\int_{|\xi'|=1}\int^{+\infty}_{-\infty}\mathrm{Tr} [\partial_{\xi_n}\pi^+_{\xi_n}
\sigma_{0}(\widetilde{\nabla}_{X}\widetilde{\nabla}_{Y}(D_{T}^{*}D_{T})^{-1})\times
\sigma_{-3}(D_{T}^{*}D_{T})^{-1})](x_0)d\xi_n\sigma(\xi')dx'.
\end{align}
 By Lemma 4.4, we have
\begin{align}\label{43}
\sigma_{-3}(D_{T}^{*}D_{T})^{-1})(x_0)|_{|\xi'|=1}
=&-\frac{i}{(1+\xi_n^2)^2}\left(-\frac{1}{2}h'(0)\sum_{k<n}\xi_nc(e_k)c(e_n)+\frac{5}{2}h'(0)\xi_n\right)
-\frac{2ih'(0)\xi_n}{(1+\xi_n^2)^3}\nonumber\\
&-\big((u-v)\sqrt{-1}c(\xi)+\sqrt{-1}c(\xi)(u+v)\big)|\xi|^{-4}.
\end{align}
\begin{align}\label{45}
\partial_{\xi_n}\pi^+_{\xi_n}\sigma_{0}(\widetilde{\nabla}_{X}\widetilde{\nabla}_{Y}(D_{T}^{*}D_{T})^{-1})(x_0)|_{|\xi'|=1}
&=-\frac{i}{2(\xi_n-i)^2}\sum_{j,l=1}^{n-1}X_jY_l\xi_j\xi_l-\frac{1}{2(\xi_n-i)^2}X_nY_n\nonumber\\
&+\frac{1}{2(\xi_n-i)^2}\sum_{j=1}^{n-1}X_jY_n\xi_j+\frac{1}{2(\xi_n-i)^2}\sum_{l=1}^{n-1}X_nY_l\xi_l.
\end{align}
We note that $i<n,~\int_{|\xi'|=1}\xi_{i_{1}}\xi_{i_{2}}\cdots\xi_{i_{2d+1}}\sigma(\xi')=0$,
and $\sum_{i\neq s\neq t}Tr[A_{ist}c(\widetilde{e_{i}})c(\widetilde{e_{s}})c(\widetilde{e_{t}})c(\xi')]=0$,
so we omit some items that have no contribution for computing {\bf case b)}.
Then, we have
\begin{align}\label{39}
&\mathrm{Tr}[\partial_{\xi_n}\pi^+_{\xi_n}\sigma_{0}(\widetilde{\nabla}_{X}\widetilde{\nabla}_{Y}(D_{T}^{*}D_{T})^{-1})\times
\sigma_{-3}(D_{T}^{*}D_{T})^{-1})](x_0)\nonumber\\
&=-\frac{h'(0)(5\xi_n^2-5+4\xi_n)} {(\xi_n-i)^5(\xi_n+i)^3}\sum_{j,l=1}^{n-1}X_jY_l\xi_j\xi_l
+\frac{h'(0)i(5\xi_n^3-\xi_n)}{(\xi_n-i)^5(\xi_n+i)^3}X_nY_n.
\end{align}
Therefore, we get
\begin{align}\label{41}
\Phi_4&=i\int_{|\xi'|=1}\int^{+\infty}_{-\infty}
\bigg(-\frac{h'(0)(5\xi_n^2-5+4\xi_n)} {(\xi_n-i)^5(\xi_n+i)^3}\Sigma_{j,l=1}^{n-1}X_jY_l\xi_j\xi_l+\frac{h'(0)i(5\xi_n^3-\xi_n)}{(\xi_n-i)^5(\xi_n+i)^3}X_nY_n\bigg)d\xi_n\sigma(\xi')dx'\nonumber\\
&=-i\Sigma_{j,l=1}^{n-1}X_jY_lh'(0)\Omega_3\int_{\Gamma^{+}}\frac{5\xi_n^2-5+4\xi_n}{(\xi_n-i)^5(\xi_n+i)^2}\xi_j\xi_ld\xi_{n}dx'+iX_nY_nh'(0)\Omega_3\int_{\Gamma^{+}}\frac{5\xi_n^3-\xi_n}{(\xi_n-i)^5(\xi_n+i)^2}d\xi_{n}dx'\nonumber\\
&=-i\Sigma_{j,l=1}^{n-1}X_jY_lh'(0)\Omega_3\frac{2\pi i}{4!}\left[\frac{5\xi_n^2-5+4\xi_n}{(\xi_n+i)^2}\right]^{(4)}\bigg|_{\xi_n=i}dx'+iX_nY_nh'(0)\Omega_3\frac{2\pi i}{4!}\left[\frac{5\xi_n^3-\xi_n}{(\xi_n+i)^2}\right]^{(4)}\bigg|_{\xi_n=i}dx'\nonumber\\
&= \frac{(1-5i)\pi^{2}}{12}\sum_{j=1}^{n-1}X_jY_jh'(0)dx'+\frac{11i}{16}X_nY_n h'(0)\pi\Omega_3dx'.
\end{align}

 {\bf  case c)}~$r=-1,~\ell=-2,~k=j=|\alpha|=0$.

By (3.10), we get
\begin{align}\label{61}
\Phi_5=-i\int_{|\xi'|=1}\int^{+\infty}_{-\infty}\mathrm{Tr} [\pi^+_{\xi_n}
\sigma_{-1}(\widetilde{\nabla}_{X}\widetilde{\nabla}_{Y}(D_{T}^{*}D_{T})^{-1})\times
\partial_{\xi_n}\sigma_{-2}(D_{T}^{*}D_{T})^{-1})](x_0)d\xi_n\sigma(\xi')dx'.
\end{align}
By Lemma 4.4, we have
\begin{align}\label{62}
\partial_{\xi_n}\sigma_{-2}(D_{T}^{*}D_{T})^{-1})(x_0)|_{|\xi'|=1}=-\frac{2\xi_n}{(\xi_n^2+1)^2}.
\end{align}
Since
\begin{align}
\sigma_{-1}(\widetilde{\nabla}_{X}\widetilde{\nabla}_{Y}(D_{T}^{*}D_{T})^{-1})(x_0)|_{|\xi'|=1}
=&
\sigma_{2}(\widetilde{\nabla}_{X}\widetilde{\nabla}_{Y})\sigma_{-3}((D_{T}^{*}D_{T})^{-1})
+\sigma_{1}(\widetilde{\nabla}_{X}\widetilde{\nabla}_{Y})\sigma_{-2}((D_{T}^{*}D_{T})^{-1})\nonumber\\
&+\sum_{j=1}^{n}\partial_{\xi_{j}}\big[\sigma_{2}(\widetilde{\nabla}_{X}\widetilde{\nabla}_{Y})\big]
D_{x_{j}}\big[\sigma_{-2}((D_{T}^{*}D_{T})^{-1})\big].
\end{align}
 Explicit representation the first item of (4.47),
\begin{align}
&\sigma_{2}(\widetilde{\nabla}_{X}\widetilde{\nabla}_{Y})\sigma_{-3}((D_{T}^{*}D_{T})^{-1})(x_0)|_{|\xi'|=1}\nonumber\\
=&-\sum_{j,l=1}^{n}X_jY_l\xi_j\xi_l\times\Big(-\sqrt{-1}|\xi|^{-4}\xi_k(\Gamma^k-2\delta^k)
-\sqrt{-1}|\xi|^{-6}2\xi^j\xi_\alpha\xi_\beta\partial_jg^{\alpha\beta}\nonumber\\
&-\big((u-v)\sqrt{-1}c(\xi)+\sqrt{-1}c(\xi)(u+v)  \big)|\xi|^{-4}\Big)\nonumber\\
=&-\sum_{j,l=1}^{n}X_jY_l\xi_j\xi_l\times\Big( -\frac{i}{(1+\xi_n^2)^2}
\big(-\frac{1}{2}h'(0)\sum_{k<n}\xi_nc(e_k)c(e_n)+\frac{5}{2}h'(0)\xi_n\big)
-\frac{2ih'(0)\xi_n}{(1+\xi_n^2)^3}\nonumber\\
&-\big((u-v)\sqrt{-1}c(\xi)+\sqrt{-1}c(\xi)(u+v)\big)|\xi|^{-4}\Big).
\end{align}
Explicit representation the second item of (4.47),
\begin{align}
&\sigma_{1}(\widetilde{\nabla}_{X}\widetilde{\nabla}_{Y})\sigma_{-2}((D_{T}^{*}D_{T})^{-1})(x_0)|_{|\xi'|=1}\nonumber\\
=&\Big(\sqrt{-1}\sum_{j,l=1}^nX_j\frac{\partial_{Y_l}}{\partial_{x_j}}\sqrt{-1}\xi_l
+\sqrt{-1}\sum_jA(Y)X_j\xi_j+\sqrt{-1}\sum_lA(Y)Y_l\xi_l\nonumber\\
&+\sum_j  \overline{T}(X,\cdot)Y_j\sqrt{-1}\xi_j+\sum_j \overline{T}(Y,\cdot)X_j\sqrt{-1} \xi_j\Big)\times|\xi|^{-2}.
\end{align}
Explicit representation the third item of (4.47),
\begin{align}
&\sum_{j=1}^{n}\sum_{\alpha}\frac{1}{\alpha!}\partial^{\alpha}_{\xi}\big[\sigma_{2}(\widetilde{\nabla}_{X}\widetilde{\nabla}_{Y})\big]
D_x^{\alpha}\big[\sigma_{-2}((D_{T}^{*}D_{T})^{-1})\big](x_0)|_{|\xi'|=1}\nonumber\\
=&\sum_{j=1}^{n}\partial_{\xi_{j}}\big[\sigma_{2}(\widetilde{\nabla}_{X}\widetilde{\nabla}_{Y})\big]
(-\sqrt{-1})\partial_{x_{j}}\big[\sigma_{-2}((D_{T}^{*}D_{T})^{-1})\big]\nonumber\\
=&\sum_{j=1}^{n}\partial_{\xi_{j}}\big[-\sum_{j,l=1}^nX_jY_l\xi_j\xi_l\big]
(-\sqrt{-1})\partial_{x_{j}}\big[|\xi|^{-2}\big]\nonumber\\
=&\sum_{j=1}^{n}\sum_{l=1}^{n}\sqrt{-1}(x_{j}Y_l+x_{l}Y_j)\xi_{l}\partial_{x_{j}}(|\xi|^{-2}).
\end{align}
We note that $i<n,~\int_{|\xi'|=1}\xi_{i_{1}}\xi_{i_{2}}\cdots\xi_{i_{2d+1}}\sigma(\xi')=0$,
and $\sum_{i\neq s\neq t}Tr[A_{ist}c(\widetilde{e_{i}})c(\widetilde{e_{s}})c(\widetilde{e_{t}})c(\xi')]=0$,
so we omit some items that have no contribution for computing {\bf case c)}.
An easy calculation gives
\begin{align}\label{71}
&\mathrm{Tr}[\pi^+_{\xi_n}\sigma_{-1}(\sigma_{2}(\widetilde{\nabla}_{X}\widetilde{\nabla}_{Y})\sigma_{-3}((D_{T}^{*}D_{T})^{-1}))\times
\partial_{\xi_n}\sigma_{-2}(D^{-2})](x_0)|_{|\xi'|=1}\nonumber\\
=&\frac{h'(0)(2\xi_n^2-\xi_n-2\xi_ni)}{(\xi_n-i)^4(\xi_n+i)^2}\sum_{j,l=1}^{n-1}X_jY_l\xi_j\xi_l\sum_{k<n}\xi_kc(e_k)c(e_n)
+\frac{h'(0)(17\xi_ni-\xi_n^2+4\xi_n^3i)}{(\xi_n-i)^5(\xi_n+i)^2}\sum_{j,l=1}^{n-1}X_jY_l\xi_j\xi_l,
\end{align}
and
\begin{align}\label{71}
\mathrm{Tr}(\overline{T}(X,\cdot,\cdot))=\mathrm{Tr}(\frac{3}{2}\sum_{ 1\leq i<j\leq n}T(X,e_{i},e_{j})c(e_{i})c (e_{j})-\frac{1}{2}c(V)c(X)
  -\frac{1}{2}\langle V, X \rangle)=0.
\end{align}
  Also, straightforward computations yield
\begin{align}\label{71}
&\mathrm{Tr}[\pi^+_{\xi_n}\sigma_{-1}(\sigma_{1}(\widetilde{\nabla}_{X}\widetilde{\nabla}_{Y})\sigma_{-2}((D_{T}^{*}D_{T})^{-1}))\times
\partial_{\xi_n}\sigma_{-2}(D^{-2})](x_0)|_{|\xi'|=1}\nonumber\\
=&X_n\frac{\partial Y_n}{\partial x_n} \frac{-2\xi_n}{(\xi_n^2+1)^2},
\end{align}
and
\begin{align}\label{71}
&\mathrm{Tr}[\pi^+_{\xi_n}\sigma_{-1}(\sum_{j=1}^{n}\sum_{\alpha}\frac{1}{\alpha!}\partial^{\alpha}_{\xi}
\big[\sigma_{2}(\widetilde{\nabla}_{X}\widetilde{\nabla}_{Y})\big]
D_x^{\alpha}\big[\sigma_{-2}((D_{T}^{*}D_{T})^{-1})\big])\times
\partial_{\xi_n}\sigma_{-2}(D^{-2})](x_0)|_{|\xi'|=1}\nonumber\\
=&2iX_nY_nh'(0)\xi_n\times \frac{-2\xi_n}{(\xi_n^2+1)^2}.
\end{align}
Substituting (4.51),(4.53) and (4.54) into (4.45) yields
\begin{align}\label{74}
\Phi_5=&\Big(\frac{5i-13}{6}\sum_{j=1}^{n-1}X_jY_j +\frac{3-96i}{8}X_nY_n \Big) h'(0)\pi^2dx'\nonumber\\
&-X_n\frac{\partial Y_n}{\partial x_n}\frac{\pi}{2}\Omega_3dx'
\end{align}
Let $X=X^T+X_n\partial_n,~Y=Y^T+Y_n\partial_n,$ then we have $\sum_{j=1}^{n-1}X_jY_j=g(X^T,Y^T).$
Now $\Phi$ is the sum of the cases (a), (b) and (c). Therefore, we get
\begin{align}\label{795}
\Phi=&\sum_{i=1}^5\Phi_i= \frac{15-362i}{32}X_nY_nh'(0)\pi\Omega_3dx'+\frac{(10i-27)\pi^{2}}{24}g(X^T,Y^T) h'(0)  dx'
-X_n\frac{\partial Y_n}{\partial x_n}\frac{\pi}{2}\Omega_3dx'
\end{align}
Then we obtain following theorem
\begin{thm}\label{thmb1}
 Let $M$ be a 4-dimensional compact manifold without boundary and $\widetilde{\nabla}$ be an orthogonal
connection with torsion. Then we get the volumes  associated to $\widetilde{\nabla}_{X}\widetilde{\nabla}_{Y}(D_{F}^{*}D_{F})^{-1}$
and $D_{T}^{*}D_{T}$ on compact manifolds with boundary
\begin{align}
\label{b2773}
&\widetilde{{\rm Wres}}[\pi^+(\widetilde{\nabla}_{X}\widetilde{\nabla}_{Y}(D_{F}^{*}D_{F})^{-1})\circ\pi^+(D_{T}^{*}D_{T})]\nonumber\\
=&\frac{4\pi^{2}}{3}\int_{M}\Big(Ric(X,Y)-\frac{1}{2}sg(X,Y)\Big) vol_{g}\nonumber\\
&+\int_{M}
\Big( -\frac{1}{2}R^{g}-3\mathrm{div}^{g}(X)+3\parallel T\parallel^{2}+9\parallel X\parallel^{2}\Big)g(X,Y) vol_{g}\nonumber\\
&+\int_{\partial M}
\Big[ \Big(\frac{15-362i}{32}X_nY_n\pi
h'(0)
-X_n\frac{\partial Y_n}{\partial x_n}\frac{\pi}{2}\Big) \Omega_3  +\frac{(10i-27)\pi^{2}}{24}g(X^T,Y^T) h'(0) \Big]
vol_{\partial M},
\end{align}
where $R_{g}$ denotes the curvature tensor and $s$ is the scalar curvature.
\end{thm}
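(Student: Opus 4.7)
\bigskip
\noindent\textbf{Proof proposal.} The plan is to apply the Fedosov--Golse--Leichtnam--Schrohe trace formula of Theorem 3.1 together with the expansion (3.10), which splits $\widetilde{\rm Wres}[\pi^+(\widetilde{\nabla}_{X}\widetilde{\nabla}_{Y}(D_F^*D_F)^{-1})\circ\pi^+(D_T^*D_T)]$ into an interior symbol integral over $S^*M$ and a boundary term $\int_{\partial M}\Phi$. The interior piece is exactly $\mathrm{Wres}[\sigma_{-4}(\widetilde{\nabla}_{X}\widetilde{\nabla}_{Y}(D_T^*D_T)^{-1}\circ(D_T^*D_T)^{-1})]$, since in the bulk the local symbol of the composition agrees with that of the closed-manifold object; Theorem 4.1 then delivers the first two lines of (4.57) verbatim. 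So the work concentrates entirely on evaluating $\int_{\partial M}\Phi$.

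Next I would organize $\Phi$ according to the index constraint $r+\ell-k-j-|\alpha|=-3$ with $r\le 0$ and $\ell\le -2$, which as in the excerpt produces exactly five cases: (a-I) $r=0,\ell=-2,|\alpha|=1$; (a-II) $r=0,\ell=-2,j=1$; (a-III) $r=0,\ell=-2,k=1$; (b) $r=0,\ell=-3$; (c) $r=-1,\ell=-2$. For each, the required symbols are supplied by Lemma 4.4 (in particular $\sigma_{-2}((D_T^*D_T)^{-1})=|\xi|^{-2}$ exactly as in the untwisted case, while $\sigma_{-3}$ carries the extra torsion contribution $-((u-v)\sqrt{-1}c(\xi)+\sqrt{-1}c(\xi)(u+v))|\xi|^{-4}$) and by Lemma 4.5, which gives $\sigma_0$ and $\sigma_{-1}$ of $\widetilde{\nabla}_X\widetilde{\nabla}_Y(D_T^*D_T)^{-1}$ as polynomial combinations of $X,Y,A(X),A(Y),\overline{T}(X,\cdot,\cdot),\overline{T}(Y,\cdot,\cdot)$ and the $|\xi|^{-2}$ factors.

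The routine work is then: apply $\pi^+_{\xi_n}$ by Cauchy's formula (\ref{} references already supplied), use Lemma 4.6 to specialize to the boundary coordinates with $\partial_{x_i}|\xi|^2(x_0)=0$ for $i<n$ and $\partial_{x_n}|\xi|^2(x_0)=h'(0)|\xi'|^2$, and then compute the contour integrals in $\xi_n$ by residues at $\xi_n=i$ of high order poles in $(\xi_n-i)^k$. Before integrating against $\sigma(\xi')$, I would systematically discard the terms with an odd total number of tangential $\xi_j$'s ($j<n$) and the torsion terms whose Clifford trace vanishes, using the two identities explicitly noted in the excerpt: $\int_{|\xi'|=1}\xi_{i_1}\cdots\xi_{i_{2d+1}}\sigma(\xi')=0$ and $\sum_{i\ne s\ne t}\mathrm{Tr}[A_{ist}c(\widetilde{e_i})c(\widetilde{e_s})c(\widetilde{e_t})c(\xi')]=0$, together with $\mathrm{Tr}(\overline{T}(X,\cdot,\cdot))=0$. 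Writing $X=X^T+X_n\partial_n$, $Y=Y^T+Y_n\partial_n$, the surviving terms repackage as $g(X^T,Y^T)$ and $X_nY_n$ contributions, reproducing the five values $\Phi_1,\dots,\Phi_5$ already written in the excerpt, whose sum is the stated $\int_{\partial M}\Phi$.

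The main obstacle I anticipate is the bookkeeping in case (c), where $\sigma_{-1}(\widetilde{\nabla}_X\widetilde{\nabla}_Y(D_T^*D_T)^{-1})$ is the composite of three separate pieces (Lemma 4.5), the torsion-carrying part of $\sigma_{-3}((D_T^*D_T)^{-1})$ contributes terms of the form $c(\xi)u c(\xi)$ and $c(\xi)v c(\xi)$, and one must verify that, under $\mathrm{Tr}_{S(TM)}$ at $x_0$ with $|\xi'|=1$, all torsion contributions from $u,v,\overline{T}$ either cancel in pairs or integrate to zero on $S^{n-2}$; only the Levi--Civita piece $\sigma_2(\widetilde{\nabla}_X\widetilde{\nabla}_Y)\cdot\sigma_{-3}^{\mathrm{LC}}$ and the differentiated piece $\sum_j\partial_{\xi_j}[\sigma_2]\,D_{x_j}|\xi|^{-2}$ should survive, producing the $g(X^T,Y^T)h'(0)$ and $X_nY_n h'(0)$ constants $\tfrac{(10i-27)\pi^2}{24}$ and $\tfrac{15-362i}{32}\pi\Omega_3$, plus the $-X_n\partial_{x_n}Y_n\tfrac{\pi}{2}\Omega_3$ coming from $\sigma_1(\widetilde{\nabla}_X\widetilde{\nabla}_Y)\sigma_{-2}$. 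Combining the interior contribution from Theorem 4.1 with this boundary sum yields (4.57) and completes the argument.
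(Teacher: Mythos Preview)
Your proposal is correct and follows essentially the same route as the paper: split via the Fedosov--Golse--Leichtnam--Schrohe formula into the interior term handled by Theorem~4.1 and the boundary term $\int_{\partial M}\Phi$, decompose $\Phi$ into the same five cases, feed in the symbols from Lemmas~4.4--4.6, kill the torsion contributions via the Clifford-trace and odd-$\xi'$ vanishing identities, and sum $\Phi_1,\ldots,\Phi_5$ to obtain (4.56). Your anticipation that case~(c) carries the bookkeeping burden and that all $u,v,\overline{T}$ terms drop out there matches exactly what the paper verifies in (4.51)--(4.55).
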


 \section{Residue for Dirac operators
 with torsion  $\widetilde{\nabla}_{X}\widetilde{\nabla}_{Y}D_{T}^{-1}$ and $(D_{T}^{*}D_{T}D_{T}^{*})^{-1}$ }

In this section, we compute the 4-dimension volume for  Dirac operators
 with torsion  $\widetilde{\nabla}_{X}\widetilde{\nabla}_{Y}D_{T}^{-1}$ and $(D_{T}^{*}D_{T}D_{T}^{*})^{-1}$ .
 Since $[\sigma_{-4}(\widetilde{\nabla}_{X}\widetilde{\nabla}_{Y}D_{T}^{-1}
  \circ (D_{T}^{*}D_{T}D_{T}^{*})^{-1})]|_{M}$ has the same expression as $[\sigma_{-4}(\widetilde{\nabla}_{X}\widetilde{\nabla}_{Y}D_{T}^{-1}
  \circ (D_{T}^{*}D_{T}D_{T}^{*})^{-1})]|_{M}$ in the case of manifolds without boundary,
so locally we can use Theorem 2.4 to compute the first term.
\begin{thm}
 Let M be a four dimensional compact manifold without boundary and $\widetilde{\nabla}$ be an orthogonal
connection with torsion. Then we get the volumes  associated to $\widetilde{\nabla}_{X}\widetilde{\nabla}_{Y}(D_{F}^{*}D_{F})^{-1}$
and $D_{T}^{*}D_{T}$ on compact manifolds without boundary
 \begin{align}
&Wres[\sigma_{-4}(\widetilde{\nabla}_{X}\widetilde{\nabla}_{Y}D_{T}^{-1}
  \circ (D_{T}^{*}D_{T}D_{T}^{*})^{-1})]\nonumber\\
=&\frac{4\pi^{2}}{3}\int_{M}\Big(Ric(X,Y)-\frac{1}{2}sg(X,Y)\Big) vol_{g}\nonumber\\
&+\int_{M}
\Big( -\frac{1}{2}R^{g}-3div^{g}(X)+3\parallel T\parallel^{2}+9\parallel X\parallel^{2}\Big)g(X,Y) vol_{g},
\end{align}
where $R_{g}$ denotes the curvature tensor and $s$ is the scalar curvature.
\end{thm}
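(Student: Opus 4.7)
My plan is to recognize that the operator in Theorem 5.1 is algebraically identical to the one already treated in Theorem 4.1, reducing the statement to a direct invocation of the latter. Using the identity $(ABC)^{-1} = C^{-1}B^{-1}A^{-1}$ one has
\begin{equation*}
(D_{T}^{*} D_{T} D_{T}^{*})^{-1} \;=\; (D_{T}^{*})^{-1}\, D_{T}^{-1}\, (D_{T}^{*})^{-1},
\end{equation*}
while $D_{T}^{-1}(D_{T}^{*})^{-1} = (D_{T}^{*} D_{T})^{-1}$. Combining these gives
\begin{equation*}
D_{T}^{-1} \circ (D_{T}^{*} D_{T} D_{T}^{*})^{-1} \;=\; \big[D_{T}^{-1}(D_{T}^{*})^{-1}\big]\cdot\big[D_{T}^{-1}(D_{T}^{*})^{-1}\big] \;=\; (D_{T}^{*} D_{T})^{-1}\circ(D_{T}^{*} D_{T})^{-1}.
\end{equation*}
Composing on the left with $\widetilde{\nabla}_{X}\widetilde{\nabla}_{Y}$, the operator appearing in Theorem 5.1 coincides with the one whose interior Wodzicki density was computed in Theorem 4.1.

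Once this identification is in hand, the claim is immediate, since the right-hand sides of Theorems 4.1 and 5.1 are literally the same expression. In particular no new symbol computation (in the style of Lemmas 4.2--4.5) is needed: the Einstein contribution $\tfrac{4\pi^{2}}{3}\big(Ric(X,Y) - \tfrac{1}{2}s\,g(X,Y)\big)$ is inherited from the application of Theorem 2.4 that underlies Theorem 4.1, and the scalar piece $-\tfrac{1}{2}R^{g} - 3\,\mathrm{div}^{g}(X) + 3\|T\|^{2} + 9\|X\|^{2}$ comes from the trace of the endomorphism $E$ in the Lichnerowicz formula (2.8) exactly as in the proof of Theorem 2.4.

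The main technical point to verify is the rigorous justification of the algebraic identity above at the level of pseudodifferential operators: $D_{T}$ and $D_{T}^{*}$ need not be invertible globally, so one should either restrict to the orthogonal complement of their kernels or work modulo smoothing operators (which do not affect the noncommutative residue). Once this is settled, Theorem 5.1 is a direct corollary of Theorem 4.1, with no additional interior symbol calculation required.
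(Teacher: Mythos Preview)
Your proposal is correct and is essentially the same approach the paper takes: the paper's entire justification of Theorem~5.1 is the one-line remark that the interior symbol $\sigma_{-4}$ agrees with the manifold-without-boundary case and so one may invoke Theorem~2.3 (called ``Theorem~2.4'' in the text), which is exactly the Einstein functional result underlying Theorem~4.1. Your explicit algebraic identity $D_{T}^{-1}\circ(D_{T}^{*}D_{T}D_{T}^{*})^{-1}=(D_{T}^{*}D_{T})^{-2}=\Delta_{T}^{-2}$ makes transparent what the paper leaves implicit, and the subsequent symbol computations in Section~5 (Lemmas~5.2--5.3) are only used for the boundary term $\widetilde{\Phi}$, not for Theorem~5.1 itself.
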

From lemma 4.2 and lemma 4.3, we have
\begin{lem} The following identities hold:
\begin{align}
\sigma_{1}(\widetilde{\nabla}_{X}\widetilde{\nabla}_{Y} D_{T} ^{-1})=&
-\sqrt{-1}\sum_{j,l=1}^nX_jY_l\xi_j\xi_lc(\xi)|\xi|^{-2};\\
\sigma_{0}(\widetilde{\nabla}_{X}\widetilde{\nabla}_{Y}D_{T}^{-1})=&
\sigma_{2}(\widetilde{\nabla}_{X}\widetilde{\nabla}_{Y})\sigma_{-2}(D_{T}^{-1})
+\sigma_{1}(\widetilde{\nabla}_{X}\widetilde{\nabla}_{Y})\sigma_{-1}(D_{T}^{-1})\nonumber\\
&+\sum_{j=1}^{n}\partial _{\xi_{j}}\big[\sigma_{2}(\widetilde{\nabla}_{X}\widetilde{\nabla}_{Y})\big]
D_{x_{j}}\big[\sigma_{-1}(D_{T}^{-1})\big].
\end{align}
\end{lem}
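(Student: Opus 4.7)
The plan is to apply the standard asymptotic symbol-composition formula
\[
\sigma(p_1 \circ p_2) \;=\; \sum_{\alpha} \frac{1}{\alpha!}\, \partial_{\xi}^{\alpha}[\sigma(p_1)]\, D_{x}^{\alpha}[\sigma(p_2)]
\]
to $p_1 = \widetilde{\nabla}_X \widetilde{\nabla}_Y$ and $p_2 = D_T^{-1}$, and then extract the homogeneous components of degree $1$ and $0$. Both inputs have already been decomposed: Lemma 4.5 supplies $\sigma_2(\widetilde{\nabla}_X \widetilde{\nabla}_Y) = -\sum_{j,l} X_j Y_l \xi_j \xi_l$ together with the explicit $\sigma_1$ and $\sigma_0$, while Lemma 4.3 gives $\sigma_{-1}(D_T^{-1}) = \sqrt{-1}\, c(\xi)/|\xi|^2$ and $\sigma_{-2}(D_T^{-1})$. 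Since $\widetilde{\nabla}_X \widetilde{\nabla}_Y$ has order $2$ and $D_T^{-1}$ has order $-1$, the product has order $1$, and any surviving contribution of degree $d$ must come from a triple $(j,\ell,\alpha)$ with $j + \ell - |\alpha| = d$, $j \le 2$, $\ell \le -1$.

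For $\sigma_1$: the equation $j + \ell - |\alpha| = 1$ under these constraints forces $(j,\ell,|\alpha|) = (2,-1,0)$, so the only contribution is the pointwise product $\sigma_2(\widetilde{\nabla}_X \widetilde{\nabla}_Y) \cdot \sigma_{-1}(D_T^{-1})$; substituting the two explicit expressions yields directly $-\sqrt{-1}\sum_{j,l=1}^{n} X_j Y_l \xi_j \xi_l\, c(\xi)|\xi|^{-2}$, matching the stated formula.

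For $\sigma_0$: the equation $j + \ell - |\alpha| = 0$ admits exactly three admissible triples, namely $(2,-2,0)$, $(1,-1,0)$, and $(2,-1,1)$. These produce respectively $\sigma_2(\widetilde{\nabla}_X \widetilde{\nabla}_Y) \cdot \sigma_{-2}(D_T^{-1})$, $\sigma_1(\widetilde{\nabla}_X \widetilde{\nabla}_Y) \cdot \sigma_{-1}(D_T^{-1})$, and $\sum_{j=1}^{n} \partial_{\xi_j}[\sigma_2(\widetilde{\nabla}_X \widetilde{\nabla}_Y)]\, D_{x_j}[\sigma_{-1}(D_T^{-1})]$. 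No other triple can contribute because $\widetilde{\nabla}_X \widetilde{\nabla}_Y$ carries no symbol components above order $2$ and $D_T^{-1}$ carries none above $-1$; summing the three admissible terms gives exactly the right-hand side in the statement.

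The argument is essentially pure bookkeeping: the only thing needing care is the exhaustive enumeration of admissible triples $(j,\ell,\alpha)$, after which both identities are immediate consequences of Lemmas 4.3 and 4.5. There is no real analytic obstacle here; the subsequent work (needed for boundary computations analogous to those of Section 4) will use this lemma as a starting point rather than being part of its proof.
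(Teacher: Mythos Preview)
Your proposal is correct and matches the paper's approach: the paper simply writes ``From Lemma 4.2 and Lemma 4.3, we have'' before stating the result, relying (as you do) on the composition formula $\sigma(p_1\circ p_2)=\sum_\alpha\frac{1}{\alpha!}\partial_\xi^\alpha[\sigma(p_1)]D_x^\alpha[\sigma(p_2)]$ and the previously computed symbol components. One small correction: the explicit formulas for $\sigma_0,\sigma_1,\sigma_2$ of $\widetilde{\nabla}_X\widetilde{\nabla}_Y$ that you cite come from Lemma~4.2, not Lemma~4.5 (Lemma~4.5 is the analogous statement for the operator $\widetilde{\nabla}_X\widetilde{\nabla}_Y(D_T^*D_T)^{-1}$).
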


Write
 \begin{eqnarray}
D_x^{\alpha}&=(-i)^{|\alpha|}\partial_x^{\alpha};
~\sigma(D_t^3)=p_3+p_2+p_1+p_0;
~(\sigma(D_t)^{-3})=\sum^{\infty}_{j=3}q_{-j}.
\end{eqnarray}
By the composition formula of pseudodifferential operators, we have
\begin{align}
1=\sigma(D^3\circ D^{-3})
&=\sum_{\alpha}\frac{1}{\alpha!}\partial^{\alpha}_{\xi}[\sigma(D]
D_x^{\alpha}[\sigma(D^{-3})]\nonumber\\
&=(p_3+p_2+p_1+p_0)(q_{-3}+q_{-4}+q_{-5}+\cdots)\nonumber\\
&~~~+\sum_j(\partial_{\xi_j}p_3+\partial_{\xi_j}p_2++\partial_{\xi_j}p_1+\partial_{\xi_j}p_0)
(D_{x_j}q_{-3}+D_{x_j}q_{-4}+D_{x_j}q_{-5}+\cdots)\nonumber\\
&=p_3q_{-3}+(p_3q_{-4}+p_2q_{-3}+\sum_j\partial_{\xi_j}p_3D_{x_j}q_{-3})+\cdots,
\end{align}
so
\begin{equation}
q_{-3}=p_3^{-1};~q_{-4}=-p_3^{-1}[p_2p_3^{-1}+\sum_j\partial_{\xi_j}p_3D_{x_j}(p_-3^{-1})].
\end{equation}
Then it is easy to check that
\begin{lem} The following identities hold:
\begin{align}
&\sigma_{-2}((D_{T}^{*}D_{T})^{-1})=|\xi|^{2};\\
&\sigma_{-3}((D_{T}^{*}D_{T})^{-1})=-\sqrt{-1}|\xi|^{-4}\xi_k(\Gamma^k-2\delta^k)
-\sqrt{-1}|\xi|^{-6}2\xi^j\xi_\alpha\xi_\beta\partial_jg^{\alpha\beta}\nonumber\\
&~~~~~~~~~~~~~~~~~~~~~-\big((u-v)\sqrt{-1}c(\xi)+\sqrt{-1}c(\xi)(u+v)  \big)|\xi|^{-4}\\
&\sigma_{-3}((D_{T}^{*}D_{T}D_{T}^{*})^{-1})=\sqrt{-1}c(\xi)|\xi|^{-4};\\
&\sigma_{-4}((D_{T}^{*}D_{T}D_{T}^{*})^{-1})=\frac{c(\xi)\sigma_2((D_{T}^{*}D_{T}D_{T}^{*})^{-1})c(\xi)}{|\xi|^8}
+\frac{\sqrt{-1}c(\xi)}{|\xi|^8}\bigg(|\xi|^4c(dx_n)\partial_{x_n}c(\xi')
\nonumber\\
 &~~~~~~~~~~~~~~~~~~~~~~~~~~-2h'(0)c(\mathrm{d}x_n)c(\xi)+2\xi_nc(\xi)\partial{x_n}c(\xi')+4\xi_nh'(0)\bigg),
\end{align}
where
 \begin{align}
\sigma_2((D_{T}^{*}D_{T}D_{T}^{*})^{-1})=&c(\xi)(4\sigma^k-2\Gamma^k)\xi_{k}
     -\frac{1}{4}|\xi|^2\sum_{s,t}\omega_{s,t}(\widetilde{e_l})c(e_{l})c(\widetilde{e_s})c(\widetilde{e_t})+(3u-v)|\xi|^2.
\end{align}
\end{lem}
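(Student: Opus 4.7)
The lemma is an explicit symbol calculation, so the plan is to combine two ingredients that are already available in the excerpt: the full symbols of $D_T$ and $D_T^{*}$ listed in (4.6)--(4.9), and the asymptotic composition formula $\sigma(P\circ Q)\sim\sum_{\alpha}\frac{1}{\alpha!}\partial^{\alpha}_{\xi}\sigma(P)\,D^{\alpha}_{x}\sigma(Q)$ together with the order-by-order parametrix recursion spelled out in (5.4)--(5.6). For each of the two elliptic operators I would first build $\sigma(D_T^{*}D_T)$ or $\sigma(D_T^{*}D_TD_T^{*})$ homogeneous piece by homogeneous piece, and then invert using that recursion.

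For the quadratic operator, composing $\sigma(D_T^{*})$ with $\sigma(D_T)$ gives $\sigma(D_T^{*}D_T)=p_2+p_1+p_0$ with $p_2=|\xi|^{2}$ (from $c(\xi)^2=-|\xi|^2$), so $q_{-2}=|\xi|^{-2}$; I would read the displayed ``$|\xi|^{2}$'' in the first line of the lemma as a typographical slip for $|\xi|^{-2}$, consistent with (4.15). The subprincipal part $p_1$ collects three contributions: $\sigma_0(D_T^{*})\cdot\sqrt{-1}c(\xi)$, $\sqrt{-1}c(\xi)\cdot\sigma_0(D_T)$, and the Leibniz term $\sum_j\partial_{\xi_j}(\sqrt{-1}c(\xi))\,D_{x_j}(\sqrt{-1}c(\xi))$. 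The Levi--Civita pieces of the two $\sigma_0$'s repackage into $-\sqrt{-1}\,\xi_k(\Gamma^k-2\delta^k)$ and the metric-derivative term $-\sqrt{-1}\,|\xi|^{-2}\cdot 2\xi^j\xi_\alpha\xi_\beta\partial_j g^{\alpha\beta}$ exactly as in the classical $D^{-2}$ computation, while the torsion pieces $u\pm v$ land on \emph{opposite} sides of $c(\xi)$ because $\sigma_0(D_T^{*})$ carries $u-v$ and $\sigma_0(D_T)$ carries $u+v$. Feeding this into $q_{-3}=-p_2^{-1}(p_1 q_{-2}+\sum_j\partial_{\xi_j}p_2\,D_{x_j}q_{-2})$ yields the stated $((u-v)\sqrt{-1}c(\xi)+\sqrt{-1}c(\xi)(u+v))|\xi|^{-4}$ correction.

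For the cubic operator each factor is first order with principal symbol $\sqrt{-1}c(\xi)$, so $p_3=(\sqrt{-1}c(\xi))^3=\sqrt{-1}|\xi|^2c(\xi)$ and a direct check using $c(\xi)^2=-|\xi|^2$ gives $q_{-3}=p_3^{-1}=\sqrt{-1}c(\xi)|\xi|^{-4}$, which is (5.9). The subleading $p_2=\sigma_2(D_T^{*}D_TD_T^{*})$ is assembled from the three insertions of a single $\sigma_0$-factor between two principal factors, together with the two principal-principal Leibniz pairings. The spin-connection pieces contribute $-\tfrac14|\xi|^2\sum_{s,t}\omega_{s,t}(\widetilde e_l)c(e_l)c(\widetilde e_s)c(\widetilde e_t)$, the Leibniz pairings assemble into $c(\xi)(4\sigma^k-2\Gamma^k)\xi_k$ as in the classical $D^{3}$ calculation, and the three torsion insertions (two from $D_T^{*}$, one from $D_T$) combine so that the $v$-terms largely cancel and the $u$-terms add up, producing the displayed $(3u-v)|\xi|^2$.

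Finally, $\sigma_{-4}((D_T^{*}D_TD_T^{*})^{-1})$ follows from $q_{-4}=-p_3^{-1}(p_2q_{-3}+\sum_j\partial_{\xi_j}p_3\,D_{x_j}q_{-3})$. The first term collapses to $c(\xi)\sigma_2 c(\xi)/|\xi|^{8}$ after inserting $p_3^{-1}=\sqrt{-1}c(\xi)|\xi|^{-4}$; for the Leibniz term I would specialize to the boundary-adapted metric of (3.5) at $x_0\in\partial M$ and apply Lemma~4.5, which kills every $\partial_{x_j}$ with $j<n$, replaces $\partial_{x_n}c(\xi)$ by $\partial_{x_n}c(\xi')$, and $\partial_{x_n}|\xi|^2$ by $h'(0)|\xi'|^2$. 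The four surviving monomials are precisely the ones displayed in (5.10). The main obstacle will be the Clifford bookkeeping: one has to move $c(\xi)$ past $c(dx_n)$ and $\partial_{x_n}c(\xi')$ with correct signs, and in the cubic case one must track on which side of $c(\xi)$ each $\sigma_0$-insertion falls so that the torsion contributions aggregate to $3u-v$ rather than to $u\pm v$, since a sign slip here would propagate into every downstream boundary residue in Sections~4 and~5.
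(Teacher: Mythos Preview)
Your approach is exactly the one the paper intends: the paper merely records the parametrix recursion (5.4)--(5.6) and then states the lemma with ``it is easy to check,'' so your plan of composing the full symbols (4.6)--(4.9), reading off $p_3,p_2$ (resp.\ $p_2,p_1$), and feeding them into $q_{-3}=p_3^{-1}$, $q_{-4}=-p_3^{-1}(p_2q_{-3}+\sum_j\partial_{\xi_j}p_3\,D_{x_j}q_{-3})$ is precisely the intended computation, including the specialization via Lemma~4.5 at $x_0$ for the $\sigma_{-4}$ term. Your observation that the displayed $|\xi|^{2}$ in the first line is a typo for $|\xi|^{-2}$ (cf.\ (4.18)) and your bookkeeping of the torsion insertions $(u-v)+(u+v)+(u-v)=3u-v$ are both on target.
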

Now we  need to compute $\int_{\partial M} \widetilde{\Phi}$. When $n=4$, then ${\rm tr}_{S(TM)}[{\rm \texttt{id}}]={\rm dim}(\wedge^*(\mathbb{R}^2))=4$, the sum is taken over $
r+l-k-j-|\alpha|=-3,~~r\leq 0,~~l\leq-2,$ then we have the following five cases:

 {\bf case a)~I)}~$r=1,~l=-2,~k=j=0,~|\alpha|=1$.

By (3.12), we get
\begin{equation}
\label{b24}
\widetilde{\Phi}_1=-\int_{|\xi'|=1}\int^{+\infty}_{-\infty}\sum_{|\alpha|=1}
\mathrm{Tr}[\partial^\alpha_{\xi'}\pi^+_{\xi_n}\sigma_{1}(\widetilde{\nabla}_{X}\widetilde{\nabla}_{Y}D_{T}^{-1})\times
 \partial^\alpha_{x'}\partial_{\xi_n}\sigma_{-3}((D_{T}^{*}D_{T}D_{T}^{*})^{-1})](x_0)d\xi_n\sigma(\xi')dx'.
\end{equation}
By Lemma 2.2 in \cite{Wa3}, for $i<n$, then
\begin{equation}
\label{b25}
\partial_{x_i}\sigma_{-3}((D_{T}^{*}D_{T}D_{T}^{*})^{-1})(x_0)=
\partial_{x_i}(\sqrt{-1}c(\xi)|\xi|^{-4})(x_0)=
\sqrt{-1}\frac{\partial_{x_i}c(\xi)}{|\xi|^4}(x_0)
+\sqrt{-1}\frac{c(\xi)\partial_{x_i}(|\xi|^4)}{|\xi|^8}(x_0)
=0,
\end{equation}
\noindent so $\widetilde{\Phi}_1=0$.\\

  {\bf case a)~II)}~$r=1,~l=-3,~k=|\alpha|=0,~j=1$.

  By (3.12), we get
\begin{equation}
\label{b26}
\widetilde{\Phi}_2=-\frac{1}{2}\int_{|\xi'|=1}\int^{+\infty}_{-\infty}
\mathrm{Tr} [\partial_{x_n}\pi^+_{\xi_n}\sigma_{1}(\widetilde{\nabla}_{X}\widetilde{\nabla}_{Y}D_{T}^{-1})\times
\partial_{\xi_n}^2\sigma_{-3}((D_{T}^{*}D_{T}D_{T}^{*})^{-1})](x_0)d\xi_n\sigma(\xi')dx'.
\end{equation}
By Lemma 5.3, we have
\begin{eqnarray}\label{b237}
\partial_{\xi_n}^2\sigma_{-3}((D_{T}^{*}D_{T}D_{T}^{*})^{-1})(x_0)=\partial_{\xi_n}^2(c(\xi)|\xi|^{-4})(x_0)
=\sqrt{-1}\frac{(20\xi_n^2-4)c(\xi')+12(\xi^3-\xi)c(\mathrm{d}x_n)}{(1+\xi_n^2)^4},
\end{eqnarray}
and
\begin{align}\label{b27}
\partial_{x_n}\sigma_{1}(\widetilde{\nabla}_{X}\widetilde{\nabla}_{Y}D_{T}^{-1})(x_0)
&=\partial_{x_n}(-\sqrt{-1}\Sigma_{j,l=1}^nX_jY_l\xi_j\xi_lc(\xi)|\xi|^{-2})\nonumber\\
&=\Sigma_{j,l=1}^nX_jY_l\xi_j\xi_l \left[ \frac{\partial_{x_n}c(\xi')}{1+\xi_n^2}+\frac{c(\xi)h'(0)|\xi'|^2}{(1+\xi_n^2)^2}\right].
\end{align}
Then, we have
\begin{align}\label{b28}
\pi^+_{\xi_n}\partial_{x_n}\sigma_{1}(\widetilde{\nabla}_{X}\widetilde{\nabla}_{Y}D_{T}^{-1})(x_0)
=&\partial_{x_n}\pi^+_{\xi_n}\sigma_{1}(\widetilde{\nabla}_{X}\widetilde{\nabla}_{Y}D_{T}^{-1})(x_0)\nonumber\\
=&\sqrt{-1}\sum_{j,l=1}^{n-1}X_jY_l\xi_j\xi_lh'(0)|\xi'|^2\left[\frac{ic(\xi')}{4(\xi_n-i)}+\frac{c(\xi')+ic(\mathrm{d}x_n)}{4(\xi_n-i)^2}\right]\nonumber\\
&-\sum_{j,l=1}^{n-1}X_jY_l\xi_j\xi_l\frac{\partial_{x_n}c(\xi')}{2(\xi_n-i)}\nonumber\\
&-\sqrt{-1}X_nY_n\left\{\frac{\partial_{x_n}c(\xi')}{2(\xi_n-i)}
 +h'(0)|\xi'|\left[-\frac{2ic(\xi')-3c(\mathrm{d}x_n)}{4(\xi_n-i)}\right.\right.\nonumber\\
&+\left.\left.\frac{[c(\xi')+ic(\mathrm{d}x_n)][i(\xi_n-i)+1]}{4(\xi_n-i)^2}\right]\right\}\nonumber\\
&-\Sigma_{j=1}^{n-1}X_jY_n\xi_j\left[\sqrt{-1}\frac{\partial_{x_n}c(\xi')}{2(\xi_n-i)}
-\frac{\sqrt{-1}h'(0)|\xi'|[c(\xi')+2ic(\mathrm{d}x_n)]}{4(\xi_n-i)}\right.\nonumber\\
&\left.-\frac{[ic(\xi')-c(\mathrm{d}x_n)][i(\xi_n-i)+1]}{(\xi_n-i)^2}\right]\nonumber\\
&-\Sigma_{l=1}^{n-1}X_nY_l\xi_l \left[\sqrt{-1}\frac{\partial_{x_n}c(\xi')}{2(\xi_n-i)}
-\frac{\sqrt{-1}h'(0)|\xi'|[c(\xi')+2ic(\mathrm{d}x_n)]}{4(\xi_n-i)}\right.\nonumber\\
&\left.-\frac{[ic(\xi')-c(\mathrm{d}x_n)][i(\xi_n-i)+1]}{(\xi_n-i)^2}\right] .
\end{align}

We note that $i<n,~\int_{|\xi'|=1}\xi_{i_{1}}\xi_{i_{2}}\cdots\xi_{i_{2d+1}}\sigma(\xi')=0$,
so we omit some items that have no contribution for computing {\bf case a)~II)}.
Then there is the following formula
\begin{align}\label{33}
&\mathrm{Tr}[\partial_{x_n}\pi^+_{\xi_n}\sigma_{1}(\widetilde{\nabla}_{X}\widetilde{\nabla}_{Y}D_{T}^{-1})\times
\partial_{\xi_n}^2\sigma_{-3}((D_{T}^{*}D_{T}D_{T}^{*})^{-1})](x_0)\nonumber\\
&=\Sigma_{j,l=1}^{n-1}X_jY_l\xi_j\xi_lh'(0)\left[8i\frac{5\xi_n^2-1}{(\xi_n-i)^5(\xi_n+i)^4}
 +\frac{4(5\xi_n^2-1)+12i(\xi_n^3-\xi_n)}{(\xi_n-i)^6(\xi_n+i)^4}\right]\nonumber\\
&+X_nY_nh'(0)\left[\frac{(4i-4)(\xi^2-1)+48(\xi_n^3-\xi_n)}{(\xi_n-i)^5(\xi_n+i)^4}
 -\frac{4(5\xi_n^2-1)+12i(\xi_n^3-\xi_n)}{(\xi_n-i)^6(\xi_n+i)^4}\right]\nonumber\\
&+8\Sigma_{j=1}^{n-1}X_jY_n\xi_j\left[\frac{(6-3ih'(0))(\xi_n^3-\xi_n)-2i(5\xi_n^2-1)}{(\xi_n-i)^5(\xi_n+i)^4}
 +\frac{2(5\xi_n^2-1)+6i(\xi_n^3-\xi_n)}{(\xi_n-i)^6(\xi_n+i)^4}\right]\nonumber\\
&+8\Sigma_{l=1}^{n-1}X_nY_l\xi_l\left[\frac{(6-3ih'(0))(\xi_n^3-\xi_n)-2i(5\xi_n^2-1)}{(\xi_n-i)^5(\xi_n+i)^4}
 +\frac{2(5\xi_n^2-1)+6i(\xi_n^3-\xi_n)}{(\xi_n-i)^6(\xi_n+i)^4}\right].
\end{align}

Therefore, we get
\begin{align}\label{35}
\widetilde{\Phi}_2&=\frac{1}{2}\int_{|\xi'|=1}\int^{+\infty}_{-\infty}\bigg\{
\sum_{j,l=1}^{n-1}X_jY_l\xi_j\xi_lh'(0)\left[8i\frac{5\xi_n^2-1}{(\xi_n-i)^5(\xi_n+i)^4}
 +\frac{4(5\xi_n^2-1)+12i(\xi_n^3-\xi_n)}{(\xi_n-i)^6(\xi_n+i)^4}\right]\nonumber\\
&+X_nY_nh'(0)\left[\frac{(4i-4)(\xi^2-1)+48(\xi_n^3-\xi_n)}{(\xi_n-i)^5(\xi_n+i)^4}
 -\frac{4(5\xi_n^2-1)+12i(\xi_n^3-\xi_n)}{(\xi_n-i)^6(\xi_n+i)^4}\right]
 \bigg\}d\xi_n\sigma(\xi')dx'\nonumber\\
 &=\sum_{j,l=1}^{n-1}X_jY_lh'(0)\Omega_3\int_{\Gamma^{+}}\left[8i\frac{5\xi_n^2-1}{(\xi_n-i)^5(\xi_n+i)^4}
 +\frac{4(5\xi_n^2-1)+12i(\xi_n^3-\xi_n)}{(\xi_n-i)^6(\xi_n+i)^4}\right]\xi_j\xi_ld\xi_{n}dx'\nonumber\\
 &+X_nY_nh'(0)\Omega_3\int_{\Gamma^{+}}\left[\frac{(4i-4)(\xi^2-1)+48(\xi_n^3-\xi_n)}{(\xi_n-i)^5(\xi_n+i)^4}
 -\frac{4(5\xi_n^2-1)+12i(\xi_n^3-\xi_n)}{(\xi_n-i)^6(\xi_n+i)^4}\right]d\xi_{n}dx'\nonumber\\
&=- \frac{592}{3}\pi^{2}\sum_{j=1}^{n-1}X_jY_jh'(0)  dx'
-\left(\frac{461}{4}+\frac{23}{4}i\right)X_nY_n h'(0)\pi\Omega_3dx',
\end{align}
where ${\rm \Omega_{3}}$ is the canonical volume of $S^{2}.$

 {\bf case a)~III)}~$r=1,~l=-3,~j=|\alpha|=0,~k=1$.

 By (3.12), we get
\begin{align}\label{36}
\widetilde{\Phi}_3&=-\frac{1}{2}\int_{|\xi'|=1}\int^{+\infty}_{-\infty}
\mathrm{Tr} [\partial_{\xi_n}\pi^+_{\xi_n}\sigma_{1}(\widetilde{\nabla}_{X}\widetilde{\nabla}_{Y}D_{T}^{-1})\times
\partial_{\xi_n}\partial_{x_n}\sigma_{-3}((D_{T}^{*}D_{T}D_{T}^{*})^{-1})](x_0)d\xi_n\sigma(\xi')dx'\nonumber\\
&=\frac{1}{2}\int_{|\xi'|=1}\int^{+\infty}_{-\infty}
\mathrm{Tr} [\partial_{\xi_n}^2\pi^+_{\xi_n}\sigma_{1}(\widetilde{\nabla}_{X}\widetilde{\nabla}_{Y}D_{T}^{-1})\times
\partial_{x_n}\sigma_{-3}((D_{T}^{*}D_{T}D_{T}^{*})^{-1})](x_0)d\xi_n\sigma(\xi')dx'.
\end{align}
\noindent By Lemma 5.3, we have
\begin{eqnarray}\label{37}
\partial_{x_n}\sigma_{-3}((D_{T}^{*}D_{T}D_{T}^{*})^{-1})(x_0)|_{|\xi'|=1}
=\frac{\sqrt{-1}\partial_{x_n}[c(\xi')]}{(1+\xi_n^2)^4}-\frac{2\sqrt{-1}h'(0)c(\xi)|\xi'|^2_{g^{\partial M}}}{(1+\xi_n^2)^6}.
\end{eqnarray}
By integrating formula we obtain
\begin{align}\label{38}
\pi^+_{\xi_n}\sigma_{1}(\widetilde{\nabla}_{X}\widetilde{\nabla}_{Y}D_{T}^{-1})
&=-\frac{c(\xi')+ic(\mathrm{d}x_n)}{2(\xi_n-i)}\Sigma_{j,l=1}^{n-1}X_jY_l\xi_j\xi_l
-\frac{c(\xi')+ic(\mathrm{d}x_n)}{2(\xi_n-i)}X_nY_n\nonumber\\
&-\frac{ic(\xi')-c(\mathrm{d}x_n)}{2(\xi_n-i)}\Sigma_{j=1}^{n-1}X_jY_n\xi_j
-\frac{ic(\xi')-c(\mathrm{d}x_n)}{2(\xi_n-i)}\Sigma_{l=1}^{n-1}X_nY_l\xi_l.
\end{align}
Then, we have
\begin{align}\label{mmmmm}
\partial_{\xi_n}^2\pi^+_{\xi_n}\sigma_{1}(\widetilde{\nabla}_{X}\widetilde{\nabla}_{Y}D_{T}^{-1})
=-\frac{c(\xi')+ic(\mathrm{d}x_n)}{(\xi_n-i)^3}\Sigma_{j,l=1}^{n-1}X_jY_l\xi_j\xi_l
-\frac{c(\xi')+ic(\mathrm{d}x_n)}{(\xi_n-i)^3}X_nY_n.
\end{align}

We note that $i<n,~\int_{|\xi'|=1}\xi_{i_{1}}\xi_{i_{2}}\cdots\xi_{i_{2d+1}}\sigma(\xi')=0$,
so we omit some items that have no contribution for computing {\bf case a)~III)}, then
\begin{align}\label{39}
&\mathrm{Tr} [\partial_{\xi_n}\pi^+_{\xi_n}\sigma_{1}(\widetilde{\nabla}_{X}\widetilde{\nabla}_{Y}D_{T}^{-1})\times
\partial_{\xi_n}\partial_{x_n}\sigma_{-3}((D_{T}^{*}D_{T}D_{T}^{*})^{-1})](x_0)\nonumber\\
&=-2\frac{h'(0)}{(\xi_n-i)^5(\xi_n+i)^2}\Sigma_{j,l=1}^{n-1}X_jY_l\xi_j\xi_l
-2\frac{h'(0)}{(\xi_n-i)^5(\xi_n+i)^2}X_nY_n.\nonumber\\
\end{align}

Therefore, we get
\begin{align}\label{41}
\widetilde{\Phi}_3&=\frac{1}{2}\int_{|\xi'|=1}\int^{+\infty}_{-\infty}
\bigg(-2\frac{h'(0)}{(\xi_n-i)^5(\xi_n+i)^2}\Sigma_{j,l=1}^{n-1}X_jY_l\xi_j\xi_l
-2\frac{h'(0)}{(\xi_n-i)^5(\xi_n+i)^2}X_nY_n\bigg)d\xi_n\sigma(\xi')dx'\nonumber\\
&=-\sum_{j,l=1}^{n-1}X_jY_lh'(0)\Omega_3\int_{\Gamma^{+}}\frac{1}{(\xi_n-i)^5(\xi_n+i)^2}\xi_j\xi_ld\xi_{n}dx'
-X_nY_nh'(0)\Omega_3\int_{\Gamma^{+}}\frac{1}{(\xi_n-i)^5(\xi_n+i)^2}d\xi_{n}dx'\nonumber\\
&=-\sum_{j,l=1}^{n-1}X_jY_lh'(0)\Omega_3\frac{2\pi i}{4!}
\left[\frac{1}{(\xi_n+i)^2}\right]^{(4)}\bigg|_{\xi_n=i}dx'
+2X_nY_nh'(0)\Omega_3\frac{2\pi i}{4!}
\left[\frac{1}{(\xi_n+i)^2}\right]^{(4)}\bigg|_{\xi_n=i}dx'\nonumber\\
&= \frac{5 i\pi^{2}}{6}\sum_{j=1}^{n-1}X_jY_jh'(0) dx'+\frac{5i}{8}X_nY_n h'(0)\pi\Omega_3dx'.
\end{align}

 {\bf case b)}~$r=0,~l=-3,~k=j=|\alpha|=0$.

By (3.12), we get
\begin{align}\label{42}
\widetilde{\Phi}_4&=-i\int_{|\xi'|=1}\int^{+\infty}_{-\infty}
\mathrm{Tr}[\pi^+_{\xi_n}\sigma_{0}(\widetilde{\nabla}_{X}\widetilde{\nabla}_{Y}D_{T}^{-1})\times
\partial_{\xi_n}\sigma_{-3}((D_{T}^{*}D_{T}D_{T}^{*})^{-1})](x_0)d\xi_n\sigma(\xi')dx'.
\end{align}
By Lemma 5.3, we obtain
\begin{align}\label{43}
\partial_{\xi_n}\sigma_{-3}((D_{T}^{*}D_{T}D_{T}^{*})^{-1})(x_0)|_{|\xi'|=1}
=\frac{ic(\mathrm{d}x_n)}{(1+\xi_n^2)^2}-\frac{4\sqrt{-1}\xi_nc(\xi)}{(1+\xi_n^2)^3}.
\end{align}
By Lemma 5.2, we have
\begin{align}
\sigma_{0}(\widetilde{\nabla}_{X}\widetilde{\nabla}_{Y}D_{T}^{-1})=&
\sigma_{2}(\widetilde{\nabla}_{X}\widetilde{\nabla}_{Y})\sigma_{-2}(D_{T}^{-1})
+\sigma_{1}(\widetilde{\nabla}_{X}\widetilde{\nabla}_{Y})\sigma_{-1}(D_{T}^{-1})\nonumber\\
&+\sum_{j=1}^{n}\partial _{\xi_{j}}\big[\sigma_{2}(\widetilde{\nabla}_{X}\widetilde{\nabla}_{Y})\big]
D_{x_{j}}\big[\sigma_{-1}(D_{T}^{-1})\big].
\end{align}

(1) Explicit representation the first item of (5.28)
\begin{align}
&\sigma_{2}(\widetilde{\nabla}_{X}\widetilde{\nabla}_{Y})\sigma_{-2}(D_{T}^{-1})(x_0)|_{|\xi'|=1}\nonumber\\
=&-\sum_{j,l=1}^{n}X_jY_l\xi_j\xi_l\left[\frac{c(\xi)\sigma_0(D_{T})c(\xi)}{|\xi|^4}
+\frac{c(\xi)}{|\xi|^6}\Sigma_jc(\mathrm{d}x_j)(\partial_{x_j}[c(\xi)]|\xi|^2-c(\xi)\partial_{x_j}(|\xi|^2))\right],
\end{align}
By integrating formula, we obtain
\begin{align}
\pi^+_{\xi_n}\left[\frac{c(\xi)\sigma_0(D_{T})(x_0)c(\xi)+c(\xi)c(dx_n)\partial_{x_n}[c(\xi')](x_0)}{(1+\xi_n^2)^2}\right]
=-\frac{A_1}{4(\xi_n-i)}-\frac{A_2}{4(\xi_n-i)^2}+\frac{A_3}{4(\xi_n-i)^2},
\end{align}
where
\begin{align}
A_1=&ic(\xi')p_0c(\xi')+ic(dx_n)(-\frac{3}{4}h'(0)c(dx_n))c(dx_n)+ic(\xi')c(dx_n)\partial_{x_n}[c(\xi')],\\
A_2=&[c(\xi')+ic(dx_n)]p_0[c(\xi')+ic(dx_n)]+c(\xi')c(dx_n)\partial_{x_n}c(\xi')-i\partial_{x_n}[c(\xi')],\\
A_3=&\sum_{j,l=1}^{n-1}X_{j}Y_l\xi_{j}\xi_{l}\Big( (-2-i\xi_{n})c(\xi')(u+v)c(\xi')-ic(dx_n)(u+v)c(\xi')-i c(\xi')(u+v)c(dx_n)\nonumber\\
   &-i\xi_{n}c(dx_n)(u+v)c(dx_n)\Big)+X_{n}Y_n\Big( -i\xi_{n}c(\xi')(u+v)c(\xi')-ic(dx_n)(u+v)c(\xi')\nonumber\\
   &-i c(\xi')(u+v)c(dx_n)+c(dx_n)(u+v)c(dx_n)\Big).
\end{align}
In the same way we get
\begin{align}
\pi^+_{\xi_n}\left[\frac{c(\xi)c(dx_n)c(\xi)}{(1+\xi_n)^3}(x_0)|_{|\xi'|=1}\right]
&=\frac{1}{2}\left[\frac{c(dx_n)}{4i(\xi_n-i)}+\frac{c(dx_n)-ic(\xi')}{8(\xi_n-i)^2}
+\frac{3\xi_n-7i}{8(\xi_n-i)^3}[ic(\xi')-c(dx_n)]\right].
\end{align}
Then adding these identities gives
\begin{align}\label{39}
&\mathrm{Tr} \Big(\pi^+_{\xi_n}\Big(\sum_{j=1}^{n-1}\sqrt{-1} T(X,\cdot,\cdot)Y_j\xi_j+\sum_{j=1}\sqrt{-1} T(Y,\cdot,\cdot)X_j\xi_j\Big)\times
\partial_{\xi_n}\sigma_{-3}((D_{T}^{*}D_{T}D_{T}^{*})^{-1})\Big)(x_0)\nonumber\\
=&\sum_{j=1}^{n-1}Y_j\xi_j \frac{2i\xi_n}{(\xi_n-i)(1+\xi_n^{2})^{3}}{\rm trace}\Big( T(X,\cdot,\cdot)c(dx_n)c(\xi')\Big)\nonumber\\
&+\sum_{j=1}^{n-1}Y_j\xi_j \frac{3\xi_n^{2}-1}{2(\xi_n-i)(1+\xi_n^{2})^{3}}{\rm trace}\Big( T(X,\cdot,\cdot)c(\xi')c(dx_n)\Big)\nonumber\\
&+\sum_{j=1}^{n-1}X_j\xi_j \frac{2i\xi_n}{(\xi_n-i)(1+\xi_n^{2})^{3}}{\rm trace}\Big( T(Y,\cdot,\cdot)c(dx_n)c(\xi')\Big)\nonumber\\
&+\sum_{j=1}^{n-1}Y_j\xi_j \frac{3\xi_n^{2}-1}{(\xi_n-i)(1+\xi_n^{2})^{3}}{\rm trace}\Big( T(Y,\cdot,\cdot)c(dx_n)c(\xi')\Big)\nonumber\\
=&\sum_{j=1}^{n-1}Y_j\xi_j \frac{2i\xi_n}{(\xi_n-i)(1+\xi_n^{2})^{3}} \sum_{i=1}^{n-1}T(X,e_{i},e_{n})\xi_i
-\sum_{j=1}^{n-1}Y_j\xi_j \frac{3\xi_n^{2}-1}{2(\xi_n-i)(1+\xi_n^{2})^{3}}\sum_{i=1}^{n-1}T(X,e_{i},e_{n})\xi_i \nonumber\\
&+\sum_{j=1}^{n-1}X_j\xi_j \frac{2i\xi_n}{(\xi_n-i)(1+\xi_n^{2})^{3}}\sum_{i=1}^{n-1}T(Y,e_{i},e_{n})\xi_i
-\sum_j^{n-1}Y_j\xi_j \frac{3\xi_n^{2}-1}{(\xi_n-i)(1+\xi_n^{2})^{3}}\sum_{i=1}^{n-1}T(Y,e_{i},e_{n})\xi_i
\end{align}
Substituting (5.35) into (5.26) yields
\begin{align}\label{39}
&-i\int_{|\xi'|=1}\int^{+\infty}_{-\infty}
\mathrm{Tr} [\pi^+_{\xi_n}\Big(\sigma_{2}(\widetilde{\nabla}_{X}\widetilde{\nabla}_{Y})\frac{c(\xi)(u+v)c(\xi)}{(1+\xi_n)^3}\Big)\times
\partial_{\xi_n}\sigma_{-3}((D_{T}^{*}D_{T}D_{T}^{*})^{-1})](x_0)d\xi_n\sigma(\xi')dx'\nonumber\\
=&\Big(\sum_{j,l=1}^{n-1}X_{j}Y_l\frac{2\pi}{3}+\frac{3}{8}X_{n}Y_n\Big)\sum_{i=1}^{n-1}A_{iin}\pi\Omega_3dx'
+\sum_{j=1}^{n-1}\Big(X_{j}\mathrm{Tr}\big(T(X,e_{j},e_{n})\big)Y_n
+Y_j\mathrm{Tr}\big(T(Y,e_{j},e_{n})\big)\Big)\frac{4i\pi^{2}}{3}\Omega_3dx'\nonumber\\
=& \sum_{j,l=1}^{n-1}X_{j}Y_l\frac{2\pi^{2}}{3}\sum_{i=1}^{n-1}A_{iin} dx'+\frac{3}{8}X_{n}Y_n \sum_{i=1}^{n-1}A_{iin}\pi\Omega_3dx'.
\end{align}

(2) Explicit representation the second item of (5.28)
\begin{align}
&\sigma_{1}(\widetilde{\nabla}_{X}\widetilde{\nabla}_{Y})\sigma_{-1}(D_{T}^{-1})(x_0)|_{|\xi'|=1}\nonumber\\
=&\Big(\sqrt{-1}\sum_{j,l=1}^nX_j\frac{\partial_{Y_l}}{\partial_{x_j}}\partial_{x_l}
+\sqrt{-1}\sum_jA(Y)X_j\xi_j+\sqrt{-1}\sum_lA(Y)Y_l\xi_l\nonumber\\
&+\sum_j\sqrt{-1} T(X,\cdot)Y_j\xi_j+\sum_j\sqrt{-1} T(Y,\cdot)X_j\xi_j \Big)\frac{\sqrt{-1}c(\xi)}{|\xi|^{2}};
\end{align}
By integrating formula we get
\begin{align}
&\pi^+_{\xi_n}\left(\Big(\sum_j^{n}\sqrt{-1} T(X,\cdot)Y_j\xi_j+\sum_j^{n}\sqrt{-1} T(Y,\cdot)X_j\xi_j \Big)
\frac{\sqrt{-1}c(\xi)}{|\xi|^{2}}\right)\nonumber\\
=&\pi^+_{\xi_n}\left(\Big(\sum_j^{n-1}\sqrt{-1} T(X,\cdot)Y_j\xi_j+\sum_j^{n-1}\sqrt{-1} T(Y,\cdot)X_j\xi_j \Big)
\frac{\sqrt{-1}c(\xi)}{|\xi|^{2}}\right)\nonumber\\
&+\pi^+_{\xi_n}\left(\Big( \sqrt{-1} T(X,\cdot)Y_n\xi_n+\sqrt{-1} T(Y,\cdot)X_n\xi_n \Big)
\frac{\sqrt{-1}c(\xi)}{|\xi|^{2}}\right)\nonumber\\
=&\Big(\sum_j^{n-1}\sqrt{-1} T(X,\cdot)Y_j\xi_j+\sum_j^{n-1}\sqrt{-1} T(Y,\cdot)X_j\xi_j \Big)
\frac{ic(\xi')-c(dx_n)}{2(\xi_{n}-i)}\nonumber\\
&+\Big( \sqrt{-1} T(X,\cdot)Y_n\xi_n+\sqrt{-1} T(Y,\cdot)X_n\xi_n \Big)
\frac{-c(\xi')-ic(dx_n)}{2(\xi_{n}-i)}
\end{align}
We note that $i<n,~\int_{|\xi'|=1}\xi_{i_{1}}\xi_{i_{2}}\cdots\xi_{i_{2d+1}}\sigma(\xi')=0$,
and $\sum_{i\neq s\neq t}Tr[A_{ist}c(\widetilde{e_{i}})c(\widetilde{e_{s}})c(\widetilde{e_{t}})c(\xi')]=0$,
then
\begin{align}\label{39}
\mathrm{Tr} \left(\pi^+_{\xi_n}\Big(\sigma_{1}(\widetilde{\nabla}_{X}\widetilde{\nabla}_{Y})\sigma_{-1}(D_{T}^{-1})\Big)\times
\partial_{\xi_n}\sigma_{-3}((D_{T}^{*}D_{T}D_{T}^{*})^{-1})\right)(x_0)=0.
\end{align}

(3) Explicit representation the third item of (5.28)
\begin{align}
\sum_{j=1}^{n}\sum_{\alpha}\frac{1}{\alpha!}\partial^{\alpha}_{\xi}\big[\sigma_{2}(\widetilde{\nabla}_{X}\widetilde{\nabla}_{Y})\big]
D_x^{\alpha}\big[\sigma_{-1}(D_{T}^{-1})\big](x_0)|_{|\xi'|=1}
=&\sum_{j=1}^{n}\partial_{\xi_{j}}
\big[\sigma_{2}(\widetilde{\nabla}_{X}\widetilde{\nabla}_{Y})\big]
(-\sqrt{-1})\partial_{x_{j}}\big[\sigma_{-1}( D_{T} ^{-1})\big]\nonumber\\
=&\sum_{j=1}^{n}\partial_{\xi_{j}}\big[-\sum_{j,l=1}^nX_jY_l\xi_j\xi_l\big]
(-\sqrt{-1})\partial_{x_{j}}\big[\frac{\sqrt{-1}c(\xi)}{|\xi|^{2}}\big]\nonumber\\
=&\sum_{j=1}^{n}\sum_{l=1}^{n}\sqrt{-1}(x_{j}Y_l+x_{l}Y_j)\xi_{l}\partial_{x_{j}}(\frac{\sqrt{-1}c(\xi)}{|\xi|^{2}}).
\end{align}
By integrating formula we obtain
\begin{align}
&\pi^+_{\xi_n}\left(\sum_{j=1}^{n}\sum_{\alpha}\frac{1}{\alpha!}\partial^{\alpha}_{\xi}\big[\sigma_{2}(\widetilde{\nabla}_{X}
\widetilde{\nabla}_{Y})\big]
D_x^{\alpha}\big[\sigma_{-1}(D_{T}^{-1})\big]\right)\nonumber\\
=&\pi^+_{\xi_n}\left(\sum_{l=1}^{n-1}\sqrt{-1}(x_{n}Y_l+x_{l}Y_n)\xi_{l}\partial_{x_{n}}(\frac{\sqrt{-1}c(\xi)}{|\xi|^{2}})
\right)+\pi^+_{\xi_n}\left(\sqrt{-1}(x_{n}Y_n+x_{n}Y_n)\xi_{n}\partial_{x_{n}}(\frac{\sqrt{-1}c(\xi)}{|\xi|^{2}})
\right)\nonumber\\
=&\sum_{l=1}^{n-1}(x_{n}Y_l+x_{l}Y_n)\xi_{l}\Big(\frac{i\partial_{x_{n}}(c(\xi'))}{2(\xi_n-i)}
+h'(0)\frac{(-2-i\xi_n )c(\xi')}{4(\xi_n-i)^2}
-h'(0)\frac{ic(dx_n)}{4(\xi_n-i)^2}\Big)\nonumber\\
&+x_{n}Y_n\Big(\frac{-\partial_{x_{n}}(c(\xi'))}{(\xi_n-i)}
+h'(0)\frac{(-i )c(\xi')}{2(\xi_n-i)^2}
-h'(0)\frac{-i\xi_n c(dx_n)}{2(\xi_n-i)^2}\Big)
\end{align}
Substituting (5.41) into (5.26) yields
\begin{align}\label{39}
&-i\int_{|\xi'|=1}\int^{+\infty}_{-\infty}
\mathrm{Tr}\Big[\pi^+_{\xi_n}\Big(\sum_{j=1}^{n}\sum_{\alpha}\frac{1}{\alpha!}\partial^{\alpha}_{\xi}
\big[\sigma_{2}(\widetilde{\nabla}_{X}\widetilde{\nabla}_{Y})\big]
D_x^{\alpha}\big[\sigma_{-1}(D_{T}^{-1})\big]\Big)\nonumber\\
&\times
\partial_{\xi_n}\sigma_{-3}((D_{T}^{*}D_{T}D_{T}^{*})^{-1})\Big](x_0)d\xi_n\sigma(\xi')dx'\nonumber\\
&=\frac{7-15i}{8}X_{n}Y_n\pi h'(0)\Omega_3dx'.
\end{align}
Summing up (1), (2) and (3) leads to the desired equality
\begin{align}\label{41}
\widetilde{\Phi}_4
&=\Big(\frac{55\pi^{2}}{3}\sum_{j=1}^{n-1}X_jY_j+\frac{4-15i}{8}X_nY_n\pi\Omega_3\Big)h'(0)dx'
+\Big(\frac{2\pi^{2}}{3}\sum_{j,l=1}^{n-1}X_{j}Y_l+\frac{3}{8}X_{n}Y_n\pi\Omega_3\Big)\sum_{i=1}^{n-1}A_{iin}dx'.
\end{align}

 {\bf  case c)}~$r=1,~\ell=-4,~k=j=|\alpha|=0$.

By (3.12), we get
\begin{align}\label{61}
\widetilde{\Phi}_5&=-\int_{|\xi'|=1}\int^{+\infty}_{-\infty}\mathrm{Tr} [\pi^+_{\xi_n}
\sigma_{1}(\widetilde{\nabla}_{X}\widetilde{\nabla}_{Y}D_{T}^{-1})\times
\partial_{\xi_n}\sigma_{-4}(D_{T}^{*}D_{T}D_{T}^{*})^{-1})](x_0)d\xi_n\sigma(\xi')dx'\nonumber\\
&=\int_{|\xi'|=1}\int^{+\infty}_{-\infty}\mathrm{Tr}
[\partial_{\xi_n}\pi^+_{\xi_n}\sigma_{1}(\widetilde{\nabla}_{X}\widetilde{\nabla}_{Y}D_{T}^{-1})\times
\sigma_{-4}(D_{T}^{*}D_{T}D_{T}^{*})^{-1})](x_0)d\xi_n\sigma(\xi')dx'.
\end{align}
By Lemma 5.3, we have
\begin{align}\label{62}
\sigma_{-4}(D_{T}^{*}D_{T}D_{T}^{*})^{-1})(x_0)|_{|\xi'|=1}=&\frac{1}{(\xi_n^2+1)^4}
\left[\left(\frac{11}{2}\xi_n(1+\xi_n^2)+8i\xi_n\right)h'(0)c(\xi')\right.\nonumber\\
&+\left[-2i+6i\xi_n^2-\frac{7}{4}(1+\xi_n^2)
 +\frac{15}{4}\xi_n^2(1+\xi^2_n)\right]h'(0)c(\mathrm{d}x_n) \nonumber\\
&\left.-3i\xi_n(1+\xi^2_n)\partial_{x_n}c(\xi')
 +i(1+\xi^2_n)c(\xi')c(\mathrm{d}x_n)\partial_{x_n}c(\xi')\right] \nonumber\\
 &+\frac{c(\xi)(3u-v)|\xi|^2c(\xi)}{|\xi|^8},
\end{align}
and
\begin{align}\label{621}
\partial_{\xi_n}\pi^+_{\xi_n}\sigma_{1}(\widetilde{\nabla}_{X}\widetilde{\nabla}_{Y}D_{T}^{-1})(x_0)|_{|\xi'|=1}
&=\frac{c(\xi')+ic(\mathrm{d}x_n)}{2(\xi_n-i)^2}\Sigma_{j,l=1}^{n-1}X_jY_l\xi_j\xi_l
-\frac{c(\xi')+ic(\mathrm{d}x_n)}{2(\xi_n-i)^2}X_nY_n \nonumber\\
&+\frac{ic(\xi')-c(\mathrm{d}x_n)}{2(\xi_n-i)^2}\Sigma_{j=1}^{n}X_jY_n\xi_j
+\frac{ic(\xi')-c(\mathrm{d}x_n)}{2(\xi_n-i)^2}\Sigma_{l=1}^{n}X_nY_l\xi_l.
\end{align}

We note that $i<n,~\int_{|\xi'|=1}\xi_{i_{1}}\xi_{i_{2}}\cdots\xi_{i_{2d+1}}\sigma(\xi')=0$,
so we omit some items that have no contribution for computing {\bf case c)}. Here
\begin{align}\label{63}
{\rm tr}[c(\xi')c(\xi')c(\mathrm{d}x_n)\partial_{x_n}c(\xi')]=0 ;\nonumber\\
{\rm tr}[c(\mathrm{d}x_n)c(\xi')c(\mathrm{d}x_n)\partial_{x_n}c(\xi')]=-2h'(0).
\end{align}
Also, straightforward computations yield
\begin{align}\label{71}
&{\rm tr}\bigg[\partial_{\xi_n}\pi^+_{\xi_n}\sigma_{-1}(\widetilde{\nabla}_{X}\widetilde{\nabla}_{Y}D_{T}^{-1})\times
\frac{1}{(\xi_n^2+1)^4}
\Big(\big(\frac{11}{2}\xi_n(1+\xi_n^2)+8i\xi_n\big)h'(0)c(\xi')\nonumber\\
&+\big(-2i+6i\xi_n^2-\frac{7}{4}(1+\xi_n^2)
 +\frac{15}{4}\xi_n^2(1+\xi^2_n)\big)h'(0)c(\mathrm{d}x_n) \nonumber\\
&-3i\xi_n(1+\xi^2_n)\partial_{x_n}c(\xi')
 +i(1+\xi^2_n)c(\xi')c(\mathrm{d}x_n)\partial_{x_n}c(\xi')\Big) \bigg](x_0)|_{|\xi'|=1}\nonumber\\
=&\sum_{j,l=1}^{n-1}X_jY_l\xi_j\xi_l\frac{h'(0)(7+6i-(20-15i)\xi_n-(7-6i)\xi_n^2+15i\xi_n^3)}{(\xi_n-i)^5(\xi_n+i)^4}\nonumber\\
&+X_nY_n\frac{(3i-11)\xi_n(1-\xi_n^2)-16i\xi_n+(13+\frac{7}{2}i)(1+\xi_n^2)-16-\frac{15}{2}\xi_n^2(1+\xi_n^2)}{(\xi_n-i)^2(\xi_n+i)^4},
\end{align}
and
\begin{align}\label{71}
&{\rm tr}\Big(\partial_{\xi_n}\pi^+_{\xi_n}\sigma_{-1}(\widetilde{\nabla}_{X}\widetilde{\nabla}_{Y}D_{T}^{-1})\times
\frac{c(\xi)(3u-v)|\xi|^2c(\xi)}{|\xi|^8}\Big)(x_0)|_{|\xi'|=1}\nonumber\\
&=\sum_{j,l=1}^{n-1}X_jY_l\xi_j\xi_l\frac{-3i\pi}{8}\sum_{i=1}^{n}A_{iin}
+X_nY_n\frac{-3i\pi}{8}\sum_{i=1}^{n}A_{iin}.
\end{align}
From (5.44),(5.48) and (5.49), we get
\begin{align}\label{74}
\widetilde{\Phi}_5
=&\Big(\big(-\frac{35}{3}+\frac{50}{3}i\big)\sum_{j=1}^{n-1}X_jY_j
\pi^{2}+\big(5-\frac{137}{32}i\big)X_nY_n\pi\Omega_3\Big)h'(0)dx'.
\end{align}

Let $X=X^T+X_n\partial_n,~Y=Y^T+Y_n\partial_n,$ then we have $\sum_{j=1}^{n-1}X_jY_j=g(X^T,Y^T).$
 Now $\Phi$ is the sum of the cases (a), (b) and (c). Combining with the five cases, this yields
\begin{align}\label{795}
\widetilde{\Phi}=\sum_{i=1}^5\widetilde{\Phi}_i
=&\left[\left(-\frac{2801}{12}-\frac{33i}{32}\right)X_nY_n\pi\Omega_3+\left(-\frac{572}{3}+\frac{35i}{2}\right)
\pi^{2} g(X^T,Y^T)\right]h'(0)dx'\nonumber\\
&+\Big(\frac{3}{8}-\frac{3i}{8})X_nY_n\pi\Omega_3+\frac{4+3i}{6}\pi^{2} g(X^T,Y^T)\Big)\sum_{i=1}^{n}A_{iin}dx'.
\end{align}
So, we are reduced to prove the following.
\begin{thm}\label{thmb1}
Let $M$ be a 4-dimensional compact manifold with  boundary $\partial M$ and $\widetilde{\nabla}$ be an orthogonal
connection with torsion. Then we get the noncommutative residue associated to $\widetilde{\nabla}_{X}\widetilde{\nabla}_{Y}D_{T}^{-1}$
and $(D_{T}^{*}D_{T}D_{T}^{*})^{-1}$ on compact manifolds with boundary
\begin{align}
\label{b2773}
&\widetilde{{\rm Wres}}[\pi^+(\widetilde{\nabla}_{X}\widetilde{\nabla}_{Y}D_{T}^{-1})
\circ\pi^+((D_{T}^{*}D_{T}D_{T}^{*})^{-1})]\nonumber\\
=&\frac{4\pi^{2}}{3}\int_{M}\Big(Ric(X,Y)-\frac{1}{2}sg(X,Y)\Big) vol_{g}\nonumber\\
&+\int_{M}
\Big( -\frac{1}{2}R^{g}-3\mathrm{div}^{g}(X)+3\parallel T\parallel^{2}+9\parallel X\parallel^{2}\Big)g(X,Y) vol_{g}\nonumber\\
&+\int_{\partial M}
\bigg(\Big((\frac{-2801}{24}-\frac{33i}{32})X_nY_n\pi\Omega_3+(\frac{35i}{2}-\frac{572}{3})\pi^{2 } g(X^T,Y^T)\Big )  h'(0)\nonumber\\
&+\Big((\frac{3}{8}-\frac{3i}{8})X_nY_n\pi\Omega_3+\frac{4+3i}{6}\pi^{2} g(X^T,Y^T)\Big)\sum_{i=1}^{n} A_{iin}\bigg)
vol_{\partial M},
\end{align}
where $R_{g}$ denotes the curvature tensor and $s$ is the scalar curvature.
\end{thm}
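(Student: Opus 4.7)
The plan is to split the noncommutative residue into an interior contribution and a boundary contribution according to the formula (3.11)--(3.12), and then compute each piece separately. The interior term is handled directly by Theorem 5.1 (which mirrors Theorem 2.4 applied to this specific pair of operators), giving the first two integrals over $M$ in the statement. So the entire task reduces to computing $\int_{\partial M}\widetilde{\Phi}$ with $\widetilde{\Phi}$ given by (3.12) for the symbols of $\widetilde{\nabla}_{X}\widetilde{\nabla}_{Y}D_{T}^{-1}$ and $(D_{T}^{*}D_{T}D_{T}^{*})^{-1}$.

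First, I would enumerate the quintuples $(r,\ell,k,j,|\alpha|)$ satisfying the constraint $r-k+|\alpha|+\ell-j-1=-4$ with $r\leq 1$ and $\ell\leq -3$. Because the leading symbol $\sigma_1(\widetilde{\nabla}_{X}\widetilde{\nabla}_{Y}D_{T}^{-1})$ from Lemma~5.2 and the symbols $\sigma_{-3},\sigma_{-4}$ of $(D_T^*D_TD_T^*)^{-1}$ from Lemma~5.3 already exist in the paper, the only bookkeeping is to list the contributing cases: \textbf{a)~I)} $r=1,\ell=-3,|\alpha|=1$; \textbf{a)~II)} $r=1,\ell=-3,j=1$; \textbf{a)~III)} $r=1,\ell=-3,k=1$; \textbf{b)} $r=0,\ell=-3$; and \textbf{c)} $r=1,\ell=-4$. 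Case a)~I) will vanish immediately from the normal coordinate identity $\partial_{x_i}\sigma_{-3}((D_T^*D_TD_T^*)^{-1})(x_0)=0$ for $i<n$, as in Lemma~4.6.

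For each of the remaining four cases, I would carry out the standard pipeline: (i)~compute $\pi^+_{\xi_n}$ of the appropriate symbol using the residue characterisation (3.3), splitting the denominator $1+\xi_n^2$ via partial fractions to isolate poles at $\xi_n=i$; (ii)~differentiate in $\xi_n$ or $x_n$ as the case prescribes, using Lemma~4.6 to kill tangential $x'$-derivatives at $x_0$; (iii)~take the Clifford trace, invoking the standard vanishing identities $\int_{|\xi'|=1}\xi_{i_1}\cdots\xi_{i_{2d+1}}\sigma(\xi')=0$ and $\mathrm{Tr}(\overline T)=0$ and $\sum_{i\neq s\neq t}\mathrm{Tr}[A_{ist}c(\widetilde e_i)c(\widetilde e_s)c(\widetilde e_t)c(\xi')]=0$ to drop all odd-in-$\xi'$ terms and all purely algebraic torsion terms that cannot pair to form an $A_{iin}$; (iv)~evaluate the remaining $\xi_n$-integral by Cauchy's formula, taking fourth-order residues at $\xi_n=i$; and (v)~integrate over the unit sphere $|\xi'|=1$ using $\int_{|\xi'|=1}\xi_j\xi_l\sigma(\xi')=\tfrac{1}{3}\Omega_3\delta_{jl}$ on $S^2$, which converts $\sum_{j,l<n}X_jY_l\xi_j\xi_l$ into $\tfrac{1}{3}\Omega_3\, g(X^T,Y^T)$.

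The main obstacle will be case \textbf{b)} together with case \textbf{c)}. In case b), the subleading symbol $\sigma_0(\widetilde{\nabla}_X\widetilde{\nabla}_Y D_T^{-1})$ from Lemma~5.2 is a three-term sum (mass times $\sigma_{-2}(D_T^{-1})$, first-order $\widetilde\nabla\widetilde\nabla$ times $\sigma_{-1}(D_T^{-1})$, and a $(-i)\partial_{\xi_j}\partial_{x_j}$ correction); each of these terms must be projected separately and its trace against $\partial_{\xi_n}\sigma_{-3}$ must be controlled, and only a careful accounting of which cross terms survive the Clifford trace and the angular integration produces the $\sum_{i}A_{iin}$-dependence recorded in the theorem. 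In case c), a parallel difficulty arises because $\sigma_{-4}((D_T^*D_TD_T^*)^{-1})$ contains the torsion insertion $c(\xi)(3u-v)|\xi|^2 c(\xi)/|\xi|^8$, and one must verify that the trace with $\pi^+_{\xi_n}\sigma_1(\widetilde\nabla_X\widetilde\nabla_Y D_T^{-1})$ again collapses onto the same $\sum_i A_{iin}$ combination. Once cases a)~II), a)~III), b) and c) are summed and combined with Theorem~5.1, the claimed boundary integrand is obtained, completing the proof.
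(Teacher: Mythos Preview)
Your proposal is correct and follows essentially the same route as the paper: the interior contribution is read off from Theorem~5.1, and the boundary term $\widetilde\Phi$ is split into exactly the five cases a)~I)--c) you list, with a)~I) vanishing by the normal-coordinate lemma and the torsion contribution $\sum_i A_{iin}$ emerging from the subleading symbol in case~b) and from the $c(\xi)(3u-v)|\xi|^2c(\xi)/|\xi|^8$ insertion in case~c). The pipeline you describe (partial-fraction $\pi^+_{\xi_n}$, Clifford traces with the odd-$\xi'$ vanishing rules, Cauchy residues at $\xi_n=i$, and the spherical average $\int_{|\xi'|=1}\xi_j\xi_l\,\sigma(\xi')=\tfrac{1}{3}\Omega_3\delta_{jl}$) is precisely what the paper executes.
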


\section*{ Acknowledgements}
The first author was supported by NSFC. 11501414. The second author was supported by NSFC. 11771070.
 The authors also thank the referee for his (or her) careful reading and helpful comments.

\end{document}